\documentclass{cmslatex}
\usepackage[paperwidth=7in, paperheight=10in, margin=.875in]{geometry}
\usepackage[backref,colorlinks,linkcolor=red,anchorcolor=green,citecolor=blue]{hyperref}

\usepackage{paper}
\usepackage{todonotes} 
\usepackage{comment}
\usepackage{bbm}

\newcommand{\triu}[1]{\mathrm{triu}\left(#1\right)}

\newcommand{\ie}{\latin{i.e.}}
\newcommand{\eg}{\latin{e.g.}}

\newcommand{\tx}{\widetilde{x}}
\newcommand{\ty}{\widetilde{y}}

\newcommand{\ttheta}{\widetilde{\theta}}
\newcommand{\veps}{\varepsilon}
\renewcommand{\tr}{\widetilde{r}}
\newcommand{\tA}{\widetilde{A}}

\newcommand{\TOM}{TriOFM}
\newcommand{\OFM}{OFM}
\newcommand{\TOMs}{TriOFMs}
\newcommand{\OFMs}{OFMs}

\begin{document}

\title{Global Convergence of Triangularized Orthogonalization-free Method
\thanks{Authors are listed in alphabetical order.}
}

\author{
    Weiguo Gao\thanks{School of Mathematical Sciences, Fudan University,
    Shanghai, China 200433; School of Data Science, Fudan University,
    Shanghai, China 200433 (wggao@fudan.edu.cn)}
    \and Yingzhou Li\thanks{School of Mathematical Sciences, Fudan University,
    Shanghai, China 200433 (yingzhouli@fudan.edu.cn)}
    \and Bichen Lu\thanks{Shanghai Center for Mathematical Sciences, Shanghai,
    China 200438 (bclu18@fudan.edu.cn)}
}
\pagestyle{myheadings} \markboth{Global Convergence of TriOFM}{W. Gao, Y.
Li, B. Lu}
\maketitle

\begin{abstract} 

This paper proves the global convergence of a triangularized
orthogonalization-free method (\TOM{}). \TOM{}, in general, applies
a triangularization idea to the gradient of an objective function and
removes the rotation invariance in minimizers. More precisely, in this
paper, the \TOM{} works as an eigensolver for sizeable sparse matrices
and obtains eigenvectors without any orthogonalization step. Due to the
triangularization, the iteration is a discrete-time flow in a non-conservative
vector field. The global convergence relies on the stable manifold theorem,
whereas the convergence to stationary points is proved in detail in this
paper. We provide two proofs inspired by the noisy power method and the
noisy optimization method, respectively.

\end{abstract}

\begin{keywords}  eigenvalue problem; orthogonalization-free;
iterative eigensolver; full configuration interaction; 
\end{keywords}

\begin{AMS} 65F15 \end{AMS}

\reversemarginpar

\section{Introduction}
\label{sec:introduction}

Solving low-lying eigenpairs of a symmetric matrix is the key task in
computational chemistry. However, the orthogonalization step in most
classical iterative eigensolvers, \eg, Lanczos method, Davidson method,
LOBPCG, etc, is too expensive to be carried out explicitly every
iteration. For example, in a chemistry application named full
configuration interaction (FCI), the matrix size grows exponentially as
the system size increases. The matrix size normally ranges from $10^8$ to
$10^{40}$. Explicitly conducting the orthogonalization step is not
affordable. In other chemistry applications like linear-scaling density
functional theory (linear-scaling DFT), the explicit orthogonalization
step would scale cubically and should be avoided. Therefore,
orthogonalization free methods (\OFMs{})~\cite{Corsetti2014, Lu2017a,
Lu2017} play an important role in these applications. While one feature
for \OFMs{} is the rotation-invariant property, \ie, multiplying an
orthogonal matrix on columns of the iteration variable preserves the
underlying objective value. Such a feature often leads to less sparse
minima comparing to the sparsity in the original eigenvectors. Therefore,
the triangularized orthogonalization-free method (\TOM{}) is proposed to
avoid orthogonalization as well as preserving the sparsity in the original
eigenvectors. In this paper, we aim to prove the global convergence of the
\TOM{} for solving the extreme eigenvalue problem.

\TOM{} is a method proposed to modify the gradient to decouple former
columns from later columns. Without using the triangularization technique,
the \OFM{} objective function adopted in this paper is,
\begin{equation}\label{eq:obj}
    f(X) = \fnorm{A+XX^\top}^2,
\end{equation}
where $A \in \bbR^{n \times n}$ is a symmetric matrix of size $n$ by $n$
and $X \in \bbR^{n \times p}$ for $p$ being the number of desired
eigenpairs. Several existing work~\cite{Gao2020, Li2019c, Wen2016} shows
that \eqref{eq:obj} has no spurious local minimizers, and
all local minima are global minima. Hence, gradient-based methods applied
to \eqref{eq:obj} converge to global minima almost surely. The global
convergence proof is a direct combination of the first-order convergence
to stationary points and the stable manifold theorem~\cite{Li2019c}.

When the triangularization technique is enabled, the iterative scheme is
no longer a discrete gradient flow. Instead, it still admits the form of a
discrete dynamical system. As been analyzed in the original \TOM{}
paper~\cite{Gao2020}, the stable fixed points are scaled eigenvectors
corresponding to the smallest $p$ eigenvalues, and all other fixed points
are unstable. Stable manifold theorem, therefore, can still be adopted to
show the global convergence of the method as long as the convergence to
fixed points is proved for \TOM{}. This paper mainly focuses on the
convergence proof to fixed points and then proves the global convergence
of \TOM{} via a recursive application of the stable manifold theorem.

\subsection{Related Work}

\TOM{} as an iterative eigensolver has plenty of related work from
numerical linear algebra. We refer readers to our original \TOM{}
paper~\cite{Gao2020} and reference therein. In this section, we focus on
reviewing work related to the global convergence of non-convex functions.

For general non-convex functions, converging to global minima is a tough
problem. Some algorithms~\cite{Laio2002} could be shown of global
convergence property. While the performance is not ideal for high
dimensional problems. Viewing the eigenvalue problem as an unconstrained
optimization problem, the objective function, as in \eqref{eq:obj}, is
non-convex but has no spurious local minima. That means all local minima
of \eqref{eq:obj} are global minima. Besides the eigenvalue problem, other
matrix factorization or matrix completion problems have been shown to
admit the same property. In~\cite{Ge2016}, authors show that the matrix
completion problem for symmetric matrices has no spurious local minimum.
Later, the result is extended to asymmetric low-rank
problems~\cite{Ge2017}, where a unified geometric analysis is proposed and
has been applied to matrix sensing, matrix completion, and robust
principal component analysis (PCA). Later, \citet{Fattahi2020} proved that
non-negative rank-1 robust PCA has no spurious local minimum.
Recently, \citet{gang2019} applied the triangularized
method to a similar but not identical objective function as ours in the
area of distributed PCA. A linear convergence in the sense of power method
was then proved in \cite{gang2022}.

Given the absence of spurious local minimum, the global convergence is
guaranteed if the algorithm can converge to first-order points and escape
from strict saddle points. Second-order methods naturally avoid strict
saddle points and converge to global minima. First-order methods, however,
avoid strict saddle points almost surely. If the first-order methods
contain additive random noise, \eg, stochastic gradient descent, the noise
could perturb the iteration variable into the decreasing sector around
strict saddle points. When the iteration is stuck around strict saddle
points for a long time, the method is guaranteed to escape from the saddle
points with high probability. Hence global convergence is achievable with
high probability. When the noise is not additive, for example, in the
randomized coordinate descent method, escaping from saddle points is
related to the stability analysis of Lyapunov exponent of random dynamic
system~\cite{Froyland2015, Ledrappier1991} and the proof could be carried
out with conditions on the stepsize~\cite{Chen2021b}. If the first-order
method is noise-free, the global convergence proof is directly related to
the classical stable manifold theorem in the dynamic system. The idea is
recently carried over to the optimization community in machine
learning~\cite{Lee2019}.

\subsection{Our Contributions}

In this paper, we prove the global convergence property of \TOM{}. Since
\TOM{} is not a gradient-based optimization method, the global convergence
is not in the sense of optimization. We show that \TOM{} can escape from
unstable fixed points and converge to stable fixed points, which are the
desired low-lying eigenvectors directly.

The overall strategy in proving the global convergence is to iteratively
apply the stable manifold theorem to each column of the iteration variable
$X$. The major difficulty lies in proving the convergence to a
(stable/unstable) fixed point. The reason behind the difficulty is as
follows. Given a column in $X$, the update across iterations is composed
of two parts: a force term driving towards the fixed point and a
perturbation due to the updates from related other columns. Hence, for the
method to be convergent, the first part must be dominant, and the second
part is well-controlled. In this paper, we propose two strategies to show
this. The first strategy is inspired by the convergence proof of the noisy
power method~\cite{Hardt2014}. The convergence is then decomposed into the
convergence of the angle between $X$ and fixed points and the convergence
of the column lengths of $X$. The second strategy introduces different
energy functions for different columns of $X$. The convergence proof
then is similar to that in optimization with a careful estimation of the
magnitude of the perturbation part. Two strategies have their own unique
features to be generalized for other \TOMs{}. The first strategy is
well-suited for methods whose iterative schemes are analogs to a shifted
and scaled power method. The second strategy can be applied to methods
whose energy functions for different columns are known explicitly.

\subsection{Organization}

In the rest paper, Section~\ref{sec:preliminary} states the \TOM{}
algorithm as well as properties of the stable and unstable fixed points.
Section~\ref{sec:globalconv} gives the global convergence for \TOM{}. Two
proofs for the key lemmas are provided in Section~\ref{sec:proof_details}.
A numerical experiment is given in
Section~\ref{sec:num_res} in facilitating understanding the theoretical
converging behavior.

\section{\TOM{} Revisit}
\label{sec:preliminary}

We revisit the existing results of \TOM{} in this section. Notations that
will be used throughout this paper is shown in Table~\ref{tab:notations}.

\begin{table}[ht]  
    \centering
    \begin{tabular}{lp{0.7\textwidth}}
        \toprule  
        Notation & Explanation \\ 
        \toprule
        $n$ & The size of the matrix.\\
        $q$ & The number of negative eigenvalues of the matrix.\\
        $p$ & The number of desired eigenpairs and $p \leq q$.\\
        \midrule
        $A$ & The $n$-by-$n$ symmetric matrix.\\
        $\lambda_i$ & The $i$-th smallest eigenvalue of $A$.\\
        $\Lambda$ & A diagonal matrix with diagonal entries being
              $\lambda_1, \dots, \lambda_n$.\\
        $\Lambda_i$ & The first $i$-by-$i$ principal submatrix of
              $\Lambda$.\\
        $U$ & An orthogonal matrix satisfying $U^\top A U=\Lambda$.\\
        $u_i$ & The eigenvector of $A$ corresponding to $\lambda_i$.\\
        $U_i$ & The first $i$ columns of $U$.\\
        $\rho$ & The 2-norm of $A$, \ie, $\rho = \norm{A}_2$.\\
        \midrule
        $X^{(t)}$ & An $n$-by-$p$ matrix denoting the vectors at $t$-th iteration.\\
        $x^{(t)}_i$ & The $i$-th column of $X^{(t)}$.\\
        $X^{(t)}_i$ & The first $i$ columns of $X^{(t)}$.\\
        $f(X)$ & The objective function of $X$, \eqref{eq:obj}.\\
              $\grad{f}(X)$ & The gradient of $f(X)$.\\
        \midrule
        $\alpha$ & The stepsize used in optimization algorithms.\\
        $e_i$ & The $i$-th standard basis vector, \ie, a vector of length
              $n$ with one on the $i$-th entry and zero elsewhere. \\
        \bottomrule  
    \end{tabular}
    \caption{Notations.}
    \label{tab:notations}
\end{table}

Recall the gradient of \eqref{eq:obj},
\begin{equation} \label{eq:grad}
    \grad{f}(X) = 4AX + 4XX^\top X.
\end{equation}
The $i$-th column in the first term above is $Ax_i$ which only involves
$x_i$ itself and is independent of other columns of $X$. However, the second
term above mixes all columns of $X$. According to~\cite{Li2019c}, we know
that if $X$ is a single column vector then the gradient descent method
applied to \eqref{eq:obj} converges to the eigenvector corresponding to the
smallest eigenvalue. If $X$ has two columns, then the iteration converges to
the eigenspace spanned by the two eigenvectors corresponding to the smallest
two eigenvalues. These analytical results inspire the design of the \TOM{}
scheme, where the iteration obeys,
\begin{eqnarray}
    x_1^{(t+1)} & = & x_1^{(t)} - \alpha \left( A x_1^{(t)} + x_1^{(t)}
    \left(x_1^{(t)}\right)^\top x_1^{(t)} \right), \label{eq:iter1}\\
    x_2^{(t+1)} & = & x_2^{(t)} - \alpha \left( A x_2^{(t)}
    + x_1^{(t)}
    \left(x_1^{(t)}\right)^\top x_2^{(t)}
    + x_2^{(t)}
    \left(x_2^{(t)}\right)^\top x_2^{(t)} \right), \label{eq:iter2}\\
    & \cdots & \nonumber\\
    x_i^{(t+1)} & = & x_i^{(t)} - \alpha \left( A x_i^{(t)}
    + \sum_{j=1}^{i}x_j^{(t)}
    \left(x_j^{(t)}\right)^\top x_i^{(t)} \right), \label{eq:iteri}
\end{eqnarray}
for $i = 1, 2, \dots, p$. Throughout this paper, the constant 4 in
\eqref{eq:grad} is absorbed into stepsize $\alpha$.

Iterative scheme \eqref{eq:iter1} is a gradient descent method applied to
\eqref{eq:obj} with a single vector. Iterative scheme \eqref{eq:iter2}
admits the updates on the second column of a gradient descent method
applied to \eqref{eq:obj} with two vectors. Similar idea is applied to the
rest vectors. Using the iterative scheme above, we expect that: i) the first
column converges to the eigenvector corresponding to the smallest
eigenvalue; ii) the convergence of other columns can be done inductively.
To simplify the notation, we define a modified updating direction as,
\begin{equation} \label{eq:g}
    g(X) = AX + X \triu{X^\top X},
\end{equation}
where $\triu{\cdot}$ denotes the upper triangular part of a given matrix.
Then the iterative scheme in \eqref{eq:iteri} can be rewritten as,
\begin{equation} \label{eq:triofm}
    X^{(t+1)} = X^{(t)} - \alpha g\left( X^{(t)} \right).
\end{equation}
Here the stepsize $\alpha$ is assumed to be a constant in this paper. Other
strategies for $\alpha$ refer to~\cite{Gao2020}.

The modified updating direction~\eqref{eq:g} is not a gradient of any
energy function, and hence the vector field formed by~\eqref{eq:g} is not
conservative. The stable and unstable stationary points of~\eqref{eq:g}
have been analyzed in~\cite{Gao2020}. For completeness, we include the
conclusion as in Theorem~\ref{thm:fixedpt}.

\begin{theorem}[Theorem 3.1 from \cite{Gao2020}] \label{thm:fixedpt}
    All {\bfseries fixed points} of \eqref{eq:triofm} are of form $X =
    U_q \sqrt{-\Lambda_q} P S$, where $\sqrt{\cdot}$ is applied entry-wise,
    $P \in \bbR^{q \times p}$ is the first $p$ columns of an arbitrary
    $q$-by-$q$ permutation matrix, and $S \in \bbR^{p \times p}$ is a
    diagonal matrix with diagonal entries being $0$ or $\pm 1$. Within these
    points all {\bfseries stable fixed points} are of form $X = U_p
    \sqrt{-\Lambda_p} D$, where $D \in \bbR^{p \times p}$ is a diagonal
    matrix with diagonal entries being $\pm 1$. Others are {\bfseries
    unstable fixed points}.
\end{theorem}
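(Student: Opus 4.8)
The plan is to reduce to the eigenbasis of $A$, characterize the fixed points column by column by an induction that exploits the triangular coupling in $g$, and then classify stability by linearizing the update map at a fixed point. First I would set $Y=U^\top X$; since $\triu{Y^\top Y}=\triu{X^\top X}$, the iteration is conjugate to $Y\mapsto Y-\alpha\tilde g(Y)$ with $\tilde g(Y):=\Lambda Y+Y\triu{Y^\top Y}$, so it suffices to analyze $\tilde g$ and translate back through $X=UY$. A point is fixed iff $\tilde g(Y)=0$, which column-wise reads $\Lambda y_i=-\sum_{j=1}^{i}y_j\,(y_j^\top y_i)$ for $i=1,\dots,p$.

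Next I would prove by induction on $i$ that each $y_i$ is either $0$ or $\pm\sqrt{-\lambda_{m_i}}\,e_{m_i}$ for some index $m_i\le q$ (so that $\lambda_{m_i}<0$), with the indices attached to the nonzero columns pairwise distinct. The case $i=1$ is $\Lambda y_1=-\|y_1\|^2 y_1$, which forces $y_1=0$ or a scaled eigenvector with eigenvalue $-\|y_1\|^2<0$. For the inductive step, the key trick is to pair the $i$-th column equation with each earlier nonzero $y_k$: the orthogonality of the earlier columns makes the $\lambda$-terms cancel and leaves $\|y_i\|^2\,(y_k^\top y_i)=0$, so either $y_i=0$ or $y_i\perp y_k$ for all $k<i$; in the latter case the coupling terms drop out and the equation collapses to $\Lambda y_i=-\|y_i\|^2 y_i$ again, producing the claimed form with a fresh direction. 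Undoing $Y\mapsto X=UY$ turns the nonzero columns into $\pm\sqrt{-\lambda_{m_i}}\,u_{m_i}$; packaging the signs together with the vanishing columns into the diagonal matrix $S$, and the distinct direction indices into the first $p$ columns of a $q$-by-$q$ permutation matrix $P$ (feasible because $p\le q$ leaves enough unused coordinates to fill the columns killed by $S$), yields exactly $X=U_q\sqrt{-\Lambda_q}\,P\,S$. Conversely, any such $X$ has mutually orthogonal columns that are scaled eigenvectors for negative eigenvalues, so $g(X)=0$ by direct substitution.

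For stability I would linearize the conjugate iteration at a fixed point $Y^\star$; its Jacobian is $I-\alpha\,D\tilde g(Y^\star)$, where $D\tilde g(Y^\star)$ sends $H$ to $\Lambda H+H\,\triu{(Y^\star)^\top Y^\star}+Y^\star\,\triu{H^\top Y^\star+(Y^\star)^\top H}$. Because the $i$-th column of this image depends only on the first $i$ columns of $H$, the operator $D\tilde g(Y^\star)$ is block lower triangular in the column ordering --- the structural payoff of the triangularization --- so its spectrum is the union of the spectra of the diagonal blocks $L_i$. Since $(Y^\star)^\top Y^\star$ is diagonal (the nonzero columns of $Y^\star$ are pairwise orthogonal), each $L_i$ is itself a diagonal $n$-by-$n$ matrix, and a short inspection of its entries --- which are explicit in the $\lambda_k$ and the $\|y_j^\star\|^2$ --- shows that $D\tilde g(Y^\star)$ has a strictly negative eigenvalue unless every column of $Y^\star$ is nonzero with $y_i^\star$ a scalar multiple of $e_i$ for each $i$, that is, unless $X=U_p\sqrt{-\Lambda_p}\,D$; in that exceptional case the spectrum of $D\tilde g(Y^\star)$ is strictly positive. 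Here $p\le q$ is used precisely to guarantee that a vanishing column leaves some negative eigenvalue unoccupied, hence produces a negative entry. Consequently a negative eigenvalue of $D\tilde g(Y^\star)$ forces an eigenvalue of $I-\alpha\,D\tilde g(Y^\star)$ of modulus exceeding one --- an unstable fixed point --- whereas a strictly positive spectrum gives a locally attracting fixed point for $\alpha$ in a suitable range.

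The step I expect to be the main obstacle is this last one: assembling the Jacobian of the non-conservative field $g$ correctly, exploiting its block-triangular structure, and then carrying out the bookkeeping of which diagonal entries of the blocks $L_i$ are positive, zero, or negative over all combinatorial types of fixed points. In particular, excluding the borderline cases where a diagonal entry vanishes --- these arise under eigenvalue degeneracy --- is what forces a mild spectral hypothesis, such as a gap $\lambda_p<\lambda_{p+1}$ together with simplicity of the negative eigenvalues, so that the dichotomy between stable and strictly unstable fixed points is clean.
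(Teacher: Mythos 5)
The paper itself does not prove this theorem---it is imported verbatim from \cite{Gao2020}---but your route (pass to $Y=U^\top X$, characterize fixed points column by column by pairing the $i$-th equation with earlier nonzero columns so the $\lambda$-terms cancel, then classify stability via the block-triangular Jacobian whose diagonal blocks are $J_{kk}=A+X_kX_k^\top+\left(x_k^\top x_k\right)I+x_kx_k^\top$) is exactly the argument that reference uses and that this paper quotes in the proof of Theorem~\ref{thm:global-conv}, and your fixed-point induction and sign bookkeeping for the diagonal blocks are sound. The one caveat, which you correctly flag yourself, is that the clean stable/unstable dichotomy requires the low-lying eigenvalues to be simple (or at least gapped), an assumption the paper also uses implicitly (e.g.\ the factors $\lambda_{i+1}-\lambda_i$ appearing in Lemma~\ref{app:lemma7}).
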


All the stable fixed points are composed of desired eigenvectors, with its
length being the square root of the corresponding eigenvalue. All other
unstable fixed points are also composed of eigenvectors up to a scaler.
Next, we will prove that under mild conditions, the iterative
scheme~\eqref{eq:triofm} converges to the stable fixed points for almost
all initializations.

\section{Global Convergence Analysis of \TOM{}}
\label{sec:globalconv}

In this section, we will prove the global convergence for the fixed
stepsize \TOM{}. The analysis of \TOM{} is intuitively straightforward but
technically challenging.

First, we will explain the intuition behind the convergence of \TOM{}.
Each column of the variable in \TOM{} addresses a different optimization
problem. Let us consider the convergence of the third column. Assume the
first two columns have converged. Since the third column is decoupled from
later columns, the convergence of the third column in \TOM{} is fully
determined by the first three columns. Intuitively, if we assume the first
two columns $x_1, x_2$ are frozen to be a stable fixed point, then the
third column in \eqref{eq:triofm} is associated with an optimization
problem,
\begin{equation} \label{eq:x3}
    \fnorm{A + x_3 x_3^\top}^2 + 2\fnorm{x_1^\top x_3}^2
    + 2\fnorm{x_2^\top x_3}^2 = \fnorm{\tA +
    x_3 x_3^\top}^2,
\end{equation}
where $\tA = A + x_1 x_1^\top + x_2 x_2^\top = A - \lambda_1 u_1 u_1^\top
- \lambda_2 u_2 u_2^\top$. Hence the optimization problem for $x_3$ is of
the same form as the single-column version of \eqref{eq:triofm}, whose
global convergence is guaranteed for almost all
initializations~\cite{Li2019c}. Then, by an induction argument, the
convergence analysis can be applied to later columns one-by-one. However,
there are two niches. First, the numerical error for the converged $x_1$
and $x_2$ should be considered in the convergence analysis of $x_3$, \ie,
the objective function in \eqref{eq:x3} has an extra error term. The
second niche is problematic. For random initialized $x_1$, $x_2$, and
$x_3$, the convergence of $x_1$ and $x_2$ should not make $x_3$ fall into
the problematic zero-measure initial set where $x_3$ will converge to an
unstable stationary point. A complete proof must fulfill these two
niches.

In the following, we present the rigorous global convergence analysis for
\TOM{}. All lemmas and theorems for the convergence are stated under
Assumption~\ref{assump:init}, where we assume the iteration starts in a
big domain. Within the domain, the Hessian matrices are bounded from
above.

\begin{assumption} \label{assump:init}
    Let $R_i=2^{i-1}\sqrt{3\rho}$ for all $1 \leq i\leq p$. The initial
    point $X^{(0)} = \begin{pmatrix}x_1^{(0)} & x_2^{(0)} & \cdots &
    x_p^{(0)}\end{pmatrix}$ satisfies $\norm{x_i^{(0)}} \leq R_i$ for all
    $1\leq i\leq p$. The stepsize in~\eqref{eq:triofm} satisfies $\alpha
    \leq \frac{1}{10R_p^2}$.
\end{assumption}

According to Theorem~\ref{thm:fixedpt}, stable fixed points are of form $X
= U_p \sqrt{-\Lambda_p} D$ and columns are scalar multiple of the $p$
low-lying eigenvectors of $A$. Global convergence aims at showing that the
iteration \eqref{eq:triofm} converges to one of the stable fixed points.
In order to simplify the notation, we define the set of stable fixed
points as $\calX^* = \left\{U_p \sqrt{-\Lambda_p}D \right\}$, where $U_p$,
$\Lambda_p$, and $D$ are defined in Theorem~\ref{thm:fixedpt}. Further,
the distance between a point $X$ and the set $\calX^*$ is denoted as
$\fnorm{X - \calX^*} = \min_{Y \in \calX^*} \fnorm{X-Y}$, which means the
smallest F-norm between $X$ and all points in $\calX^*$. For the first $i$
columns, we define the set of stable fixed points as $\calX^*_i =
\left\{U_i \sqrt{-\Lambda_i}D_i \right\}$, where $U_i$ is the first $i$
columns of $U_p$, $\Lambda_i$ and $D_i$ are the first $i$-by-$i$ principal
submatrices of $\Lambda_p$ and $D$ respectively.

Theorem~\ref{thm:global-conv} states that the iteration \eqref{eq:triofm}
converges to a global minimum almost surely. While the proof depends on
the following lemmas. Lemma~\ref{lem:bounded-domain} guarantees that
$X^{(t)}$ stays within the big domain as long as the initial point is in
there; Lemma~\ref{lem:single-global-conv} shows the global convergence of
$x_1$; Lemma~\ref{lem:multi-global-conv} shows the convergence of $x_i$
for $i > 1$. The proof of Lemma~\ref{lem:bounded-domain} is in
Appendix~\ref{app:whole_bound}, and the proofs of
Lemma~\ref{lem:single-global-conv} and Lemma~\ref{lem:multi-global-conv}
are in Section~\ref{sec:proof_details}.

\begin{lemma} \label{lem:bounded-domain}
    Assume Assumption~\ref{assump:init} is satisfied. For any iteration
    $t$, the iterate $X^{(t)}=(x_1^{(t)}, \dots, x_n^{(t)})$ satisfies
    $\norm{x_i^{(t)}} \leq R_i$ for all $1 \leq i \leq p$.
\end{lemma}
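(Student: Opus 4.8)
The plan is to prove the bound $\|x_i^{(t)}\| \le R_i$ by a double induction: an outer induction on the column index $i$ from $1$ to $p$, and, for each fixed $i$, an inner induction on the iteration count $t$. For the base case $i = 1$, the update \eqref{eq:iter1} reads $x_1^{(t+1)} = x_1^{(t)} - \alpha(A x_1^{(t)} + x_1^{(t)}\|x_1^{(t)}\|^2)$; I would bound $\|x_1^{(t+1)}\|$ by the triangle inequality and then argue that, whenever $\|x_1^{(t)}\| \le R_1 = \sqrt{3\rho}$, the right-hand side stays below $R_1$. The key mechanism is that the cubic restoring term $-\alpha x_1 \|x_1\|^2$ pulls the norm back once it is large: if $\|x_1^{(t)}\|$ is close to $R_1$ then $\|x_1^{(t)}\|^2$ is close to $3\rho$, which dominates $\rho = \|A\|_2$, so the net effect of the two terms is contractive; the stepsize constraint $\alpha \le \frac{1}{10R_p^2} \le \frac{1}{10R_1^2}$ is exactly what keeps the discrete step from overshooting. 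A convenient way to make this rigorous is to write $x_1^{(t+1)} = (I - \alpha A - \alpha\|x_1^{(t)}\|^2 I)x_1^{(t)}$ and estimate the spectral norm of the matrix $I - \alpha A - \alpha\|x_1^{(t)}\|^2 I$, splitting into the regime $\|x_1^{(t)}\|^2 \le \rho$ (where the norm of the iterate can only grow by a controlled factor but is already small enough that the bound $R_1$ has slack) and the regime $\rho < \|x_1^{(t)}\|^2 \le 3\rho$ (where the matrix is a contraction on the relevant vector).

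For the inductive step, suppose the bound $\|x_j^{(t)}\| \le R_j$ holds for all $t$ and all $j < i$, and also $\|x_i^{(t)}\| \le R_i$; I must show $\|x_i^{(t+1)}\| \le R_i$. From \eqref{eq:iteri}, $x_i^{(t+1)} = \bigl(I - \alpha A - \alpha \sum_{j=1}^{i} x_j^{(t)}(x_j^{(t)})^\top\bigr) x_i^{(t)}$. The cross terms $x_j^{(t)}(x_j^{(t)})^\top x_i^{(t)}$ for $j < i$ are the "perturbation" discussed in the paper; each has norm at most $R_j^2 \|x_i^{(t)}\|$, so $\sum_{j=1}^{i-1} R_j^2 = \rho \sum_{j=1}^{i-1} 3 \cdot 4^{j-1} = \rho(4^{i-1} - 1) = R_i^2 - \rho$ — this is precisely why the radii $R_i = 2^{i-1}\sqrt{3\rho}$ are chosen to double, so that the accumulated perturbation from earlier columns is absorbed. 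Combining this with the self-term $-\alpha x_i^{(t)}\|x_i^{(t)}\|^2$ and $A x_i^{(t)}$, the effective "shift" seen by $x_i$ is at most $\rho + (R_i^2 - \rho) + \|x_i^{(t)}\|^2 = R_i^2 + \|x_i^{(t)}\|^2$ in the growth regime, and the cubic self-term again dominates once $\|x_i^{(t)}\|^2$ approaches $R_i^2$ (since $R_i^2 \ge 2\rho + (R_i^2 - \rho) + \text{slack}$ fails naively, I expect one actually needs the sharper split: when $\|x_i^{(t)}\|^2$ is near $R_i^2$ the self-term $\alpha\|x_i\|^2 \cdot \|x_i\|$ beats $\alpha(\rho + R_i^2 - \rho)\|x_i\| = \alpha R_i^2 \|x_i\|$ only in combination with the $I$ term and the stepsize bound). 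The stepsize condition $\alpha \le \frac{1}{10 R_p^2} \le \frac{1}{10 R_i^2}$ is used to control the discrete overshoot as in the base case.

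The main obstacle I anticipate is getting the constants to close in the "large-norm" regime for general $i$: one needs the inequality $\|x_i^{(t+1)}\| \le R_i$ to hold on the full range $0 \le \|x_i^{(t)}\| \le R_i$, and near the upper endpoint the bound is tight, so the argument must exploit both the contraction from the cubic term and the smallness of $\alpha$ simultaneously rather than estimating $\|I - \alpha(\cdots)\|$ crudely. I would handle this by defining $s = \|x_i^{(t)}\|^2$ and reducing to a scalar inequality: show that the function $s \mapsto (1 + \alpha(\rho + R_i^2 - \rho + s))^2 \cdot \bigl(\text{something}\bigr)$ — more precisely, bounding $\|x_i^{(t+1)}\|^2$ above by $s(1 - \alpha(s - \rho - (R_i^2-\rho)))^2$ or similar in the regime $s > \rho$, and checking this is $\le R_i^2$ for $\alpha \le \frac{1}{10R_i^2}$ and $s \le R_i^2$ — essentially a one-variable calculus estimate. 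This is the routine-but-delicate computation that the appendix proof presumably carries out in full; in the plan I only flag that the doubling of radii and the factor $10$ in the stepsize are tuned precisely to make it go through.
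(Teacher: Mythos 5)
Your overall strategy is the same as the paper's appendix proof: argue one iteration at a time, write $x_i^{(t+1)} = \bigl(I - \alpha \widetilde{A} - \alpha \|x_i^{(t)}\|^2 I\bigr)x_i^{(t)}$ with $\widetilde{A} = A + \sum_{j<i} x_j^{(t)}(x_j^{(t)})^\top$, absorb the cross-column perturbation using the geometric growth of the radii $R_j$, and split into a large-norm regime (where the cubic self-term makes the step contractive) and a small-norm regime (where there is slack), with the stepsize bound controlling overshoot. The paper implements this by expanding $\|x_i^{(t+1)}\|^2$ exactly and bounding the first- and second-order terms in $\alpha$, splitting at $\|x_i^{(t)}\|^2 = \tfrac{2}{3}R_i^2$; your spectral-norm packaging would be an acceptable variant.

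However, as written there is a genuine gap, and it sits exactly at the step you defer. Your bookkeeping of the perturbation is wrong: $\sum_{j=1}^{i-1} R_j^2 = 3\rho\sum_{j=1}^{i-1} 4^{j-1} = \rho\,(4^{i-1}-1) = \tfrac{R_i^2}{3} - \rho$, not $R_i^2 - \rho$, so the total shift is $\rho + \sum_{j<i} R_j^2 = \tfrac{R_i^2}{3}$ (this is precisely the paper's inequality \eqref{eq:ineqsum}), not $R_i^2$. This factor of $3$ is what makes the argument close: with the correct constant, whenever $\|x_i^{(t)}\|^2 \ge \tfrac{2}{3}R_i^2$ the quadratic form $x_i^\top(\widetilde{A} + \|x_i\|^2 I)x_i \ge \tfrac{R_i^2}{3}\|x_i\|^2$ and the step strictly decreases the norm (with the second-order $\alpha^2$ terms easily absorbed under $\alpha \le \tfrac{1}{10R_p^2}$), while for $\|x_i^{(t)}\|^2 < \tfrac{2}{3}R_i^2$ the growth factor is at most $1 + \alpha\tfrac{R_i^2}{3} \le 1 + \tfrac{1}{30}$, which cannot push the norm above $R_i$. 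With your constant $R_i^2$, the endpoint $\|x_i^{(t)}\| = R_i$ is exactly borderline, the scalar bound you sketch (a factor of the form $1 - \alpha(s - R_i^2) \ge 1$ on the whole range $s \le R_i^2$) gives no contraction, and the "routine-but-delicate computation" you flag is not carried out and would not obviously succeed in the form stated. Fixing the arithmetic turns your plan into essentially the paper's proof; without it, the decisive large-norm estimate is missing.
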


\begin{lemma} \label{lem:single-global-conv}
    Assume Assumption~\ref{assump:init} is satisfied and $x_1^{(0)}$
    is not perpendicular to $u_1$. Then $x_1^{(t)}$ converges to $\pm
    \sqrt{-\lambda_1} u_1$.
\end{lemma}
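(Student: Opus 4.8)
The plan is to analyze the scalar iteration for $x_1^{(t)}$ alone, since \eqref{eq:iter1} is completely decoupled from the other columns: $x_1^{(t+1)} = x_1^{(t)} - \alpha (A x_1^{(t)} + \norm{x_1^{(t)}}^2 x_1^{(t)})$. I would expand $x_1^{(t)}$ in the eigenbasis $U$ of $A$, writing $x_1^{(t)} = \sum_{k} c_k^{(t)} u_k$, and track the coordinates. Each coordinate evolves by $c_k^{(t+1)} = (1 - \alpha \lambda_k - \alpha \norm{x_1^{(t)}}^2) c_k^{(t)}$, so the multiplicative factor is common to all coordinates except for the $-\alpha\lambda_k$ term. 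This is exactly the structure of a shifted-and-scaled power method, which matches the first strategy advertised in the introduction.

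The key steps, in order: (1) use Lemma~\ref{lem:bounded-domain} to get $\norm{x_1^{(t)}} \leq R_1 = \sqrt{3\rho}$ for all $t$, so $\alpha(\lambda_k + \norm{x_1^{(t)}}^2) \leq \alpha(\rho + 3\rho) = 4\alpha\rho < 1$ by the stepsize bound $\alpha \leq \tfrac{1}{10 R_p^2} \leq \tfrac{1}{10\rho}$; this guarantees every factor $1 - \alpha\lambda_k - \alpha\norm{x_1^{(t)}}^2$ is positive and bounded away from blowing up. (2) Show the angle convergence: the ratio $c_k^{(t)}/c_1^{(t)}$ is multiplied each step by $\frac{1-\alpha\lambda_k - \alpha s^{(t)}}{1 - \alpha\lambda_1 - \alpha s^{(t)}}$ where $s^{(t)} = \norm{x_1^{(t)}}^2$; since $\lambda_k > \lambda_1$ (strictly, for $k$ outside the $\lambda_1$-eigenspace — here I would use that the desired eigenvalue is simple, or handle the eigenspace as a block), this ratio is $\leq 1 - \alpha(\lambda_k - \lambda_1)/(1 - \alpha\lambda_1)$, a uniform contraction factor strictly less than $1$; hence $x_1^{(t)}/\norm{x_1^{(t)}}$ converges to $\pm u_1$ provided $c_1^{(0)} = u_1^\top x_1^{(0)} \neq 0$, which is the hypothesis that $x_1^{(0)}$ is not perpendicular to $u_1$. (3) Show the length convergence: once the direction is essentially $u_1$, the scalar $r^{(t)} = \norm{x_1^{(t)}}$ obeys approximately $r^{(t+1)} \approx r^{(t)}(1 - \alpha\lambda_1 - \alpha (r^{(t)})^2) = r^{(t)} - \alpha r^{(t)}((r^{(t)})^2 - (-\lambda_1))$, which is a one-dimensional gradient-type iteration with stable fixed point $r = \sqrt{-\lambda_1}$ (note $\lambda_1 < 0$); a monotonicity/Lyapunov argument on $|(r^{(t)})^2 + \lambda_1|$ or on the energy then drives $r^{(t)} \to \sqrt{-\lambda_1}$. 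Combining (2) and (3) gives $x_1^{(t)} \to \pm\sqrt{-\lambda_1}\, u_1$.

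The main obstacle I anticipate is coupling the angle estimate and the length estimate cleanly: the contraction factor for the angle depends on $s^{(t)} = (r^{(t)})^2$, and the dynamics of $r^{(t)}$ in turn depend on how aligned $x_1^{(t)}$ is with $u_1$ (the off-$u_1$ components contribute to $\norm{x_1^{(t)}}^2$). So one cannot treat the two convergences in strict sequence; instead I would set up a joint estimate, e.g. track the pair $(\tan\theta^{(t)}, |(r^{(t)})^2 + \lambda_1|)$ and show both decrease (the angle geometrically, the length with an error controlled by the angle), or bound $\norm{x_1^{(t)}}$ away from $0$ first so that the angle contraction is uniform, and only afterward pin down the length. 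A secondary technical point is ruling out $x_1^{(t)} \to 0$: I must show that the ``length'' component stays bounded below, which follows because $r^{(t)} = 0$ is a repelling fixed point of the length iteration (for small $r$, $(r^{(t)})^2 - (-\lambda_1) < 0$ so $r^{(t+1)} > r^{(t)}$), using again that $\lambda_1 < 0$ and that $\alpha$ is small; here the hypothesis $u_1^\top x_1^{(0)} \neq 0$ prevents the degenerate start. These are the places where Assumption~\ref{assump:init} (both the norm bound giving $R_1$ and the stepsize bound) does the real work.
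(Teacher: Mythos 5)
Your plan is correct and follows essentially the same route as the paper: diagonalize $A$, prove linear contraction of $\tan\theta^{(t)}$ via the shifted power-method ratio $\frac{1-\alpha\lambda_2-\alpha s}{1-\alpha\lambda_1-\alpha s}$ (the paper's Lemma~\ref{app:lemma2}), bound $\norm{x_1^{(t)}}$ away from zero using that $0$ is repelling in the negative-eigenvalue directions (Lemma~\ref{app:lemma3}), and then drive the length to $\sqrt{-\lambda_1}$ by a one-dimensional contraction with the off-$u_1$ mass $\eta^2$ treated as a small perturbation. Your proposed resolution of the angle/length coupling — first secure the uniform norm lower bound, then pin down the length — is exactly how the paper handles it.
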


\begin{lemma} \label{lem:multi-global-conv}
    Assume Assumption~\ref{assump:init} is satisfied and $\lim_{t \rightarrow
    \infty} \fnorm{X_{i-1}^{(t)} - \calX^*_{i-1}} = 0$ for any $1\leq
    i\leq p$.  $x_i^{(t)}$ converges to one of the points in $\left\{ 0,
    \pm\sqrt{-\lambda_j}u_j \text{ for } j = i, \dots, p \right\}$.
\end{lemma}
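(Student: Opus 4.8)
The plan is to reduce the convergence of $x_i^{(t)}$ to a single-column problem governed by a perturbed matrix, exactly as the informal discussion around \eqref{eq:x3} suggests, and then invoke the single-column analysis (Lemma~\ref{lem:single-global-conv} and the results of~\cite{Li2019c}) in a robust, noise-tolerant form. Concretely, write the update for the $i$-th column as
\begin{equation*}
    x_i^{(t+1)} = x_i^{(t)} - \alpha\left( \tA^{(t)} x_i^{(t)} + x_i^{(t)} \left(x_i^{(t)}\right)^\top x_i^{(t)} \right),
\end{equation*}
where $\tA^{(t)} = A + \sum_{j=1}^{i-1} x_j^{(t)} \left(x_j^{(t)}\right)^\top$. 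By hypothesis $X_{i-1}^{(t)} \to \calX^*_{i-1}$, so $\tA^{(t)} \to \tA := A - \sum_{j=1}^{i-1}\lambda_j u_j u_j^\top$; note $\tA$ is symmetric with eigenpairs $(0,u_1),\dots,(0,u_{i-1})$ and $(\lambda_j,u_j)$ for $j\geq i$, so its smallest eigenvalues are $\lambda_i \leq \cdots \leq \lambda_p < 0$ with eigenvectors $u_i,\dots,u_p$ (and the nonnegative part spanned by $u_1,\dots,u_{i-1}$ and $u_{p+1},\dots,u_n$). Thus the limiting dynamics for $x_i$ is precisely the single-column TriOFM flow for $\tA$, whose fixed points are $0$ and $\pm\sqrt{-\lambda_j}u_j$ for $j=i,\dots,p$ by Theorem~\ref{thm:fixedpt} applied to $\tA$.

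The key steps, in order: (1) Using Lemma~\ref{lem:bounded-domain}, all columns stay in a fixed compact set, so $\epsilon_t := \fnorm{\tA^{(t)} - \tA} = O\big(\fnorm{X_{i-1}^{(t)} - \calX^*_{i-1}}\big) \to 0$; the per-step perturbation to the $x_i$-update has norm $\leq \alpha \epsilon_t \norm{x_i^{(t)}} \leq \alpha R_i \epsilon_t$, a summable-in-the-limit, vanishing disturbance. (2) Exhibit a Lyapunov/energy function for the autonomous flow — the natural choice is $h(x) = \fnorm{\tA + xx^\top}^2$ (equivalently the per-column energy used in the second proof strategy), which strictly decreases along the unperturbed iteration for $\alpha$ small, with decrease quantitatively controlled by $\norm{g}^2$. (3) Show that along the perturbed iteration $h(x_i^{(t)})$ is an approximate supermartingale: $h(x_i^{(t+1)}) \leq h(x_i^{(t)}) - c\alpha\norm{\hat g(x_i^{(t)})}^2 + C\epsilon_t$, where $\hat g$ is the autonomous direction; since $\sum_t \epsilon_t$ need not converge we instead argue that $h(x_i^{(t)})$ converges (it is bounded below and its increments are eventually nonpositive up to a null sequence), forcing $\norm{\hat g(x_i^{(t)})} \to 0$, i.e. accumulation only at fixed points of the autonomous flow. (4) Finally, upgrade "the iterates accumulate at the fixed point set" to "the iterates converge to one fixed point": since the fixed points $\{0,\pm\sqrt{-\lambda_j}u_j\}$ are isolated (after quotienting by sign, or because distinct $\lambda_j$ give well-separated points; if eigenvalues coincide one works with the eigenspace and the argument localizes on a sphere), a standard connectedness-of-the-limit-set argument plus the vanishing step size $\alpha\|\hat g\|\to 0$ shows the limit set is a single point.

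The main obstacle is step (3)–(4): controlling the drift of the energy under a non-vanishing-in-sum perturbation $\epsilon_t$, and then ruling out the iterates "drifting along" between two nearby fixed points. For the drift, the device is to use that $\epsilon_t\to 0$ so that, outside any fixed neighborhood of the fixed-point set, the autonomous decrease $c\alpha\norm{\hat g}^2$ is bounded below by a positive constant and eventually dominates $C\epsilon_t$; hence the iterate must enter every such neighborhood and, once the energy is low enough and $\epsilon_t$ small enough, cannot leave — this pins it near exactly one fixed point. Ruling out inter-fixed-point drift uses the isolation of fixed points together with the fact that each step moves by $O(\alpha)$ and the net energy change over any window straddling the "gap" between two fixed points would have to be bounded away from zero, contradicting convergence of $h(x_i^{(t)})$. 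I expect the bulk of the technical work (deferred to Section~\ref{sec:proof_details}) to be the explicit constants making the supermartingale-type inequality hold under Assumption~\ref{assump:init}, mirroring the two strategies (noisy power method / noisy optimization) advertised in the introduction.
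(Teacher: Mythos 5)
Your route is essentially the paper's second strategy (the ``noisy optimization'' proof of Section~\ref{sec:proof2}): the same per-column energy $F(x)=\fnorm{\tA+xx^\top}^2$ with $\tA=A+X_{i-1}^{*}\left(X_{i-1}^{*}\right)^\top$, the same treatment of the earlier columns' error as a vanishing but non-summable disturbance $E^{(t)}x_i^{(t)}$, and the same endgame of ``finitely many stationary points, hence finitely many transitions.'' The genuine gap is at your step (3). From
\begin{equation*}
    h\left(x_i^{(t+1)}\right)\;\leq\; h\left(x_i^{(t)}\right)-c\,\alpha\,\bigl\|\hat g\left(x_i^{(t)}\right)\bigr\|^2+C\,\epsilon_t ,
    \qquad \epsilon_t\to 0,\ \ \textstyle\sum_t\epsilon_t=\infty,
\end{equation*}
you cannot conclude that $h\left(x_i^{(t)}\right)$ converges: a bounded sequence whose increments are merely bounded above by a null sequence can oscillate forever (slow rises of size $O(\epsilon_t)$ per step, accumulated over arbitrarily many steps spent near a stationary point where $\|\hat g\|$ is tiny, followed by fast descents elsewhere), so ``bounded below with increments eventually nonpositive up to a null sequence'' is not a valid convergence criterion, and without convergence of $h$ you do not get $\|\hat g(x_i^{(t)})\|\to 0$. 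Your closing paragraph then excludes drifting between fixed points by ``contradicting convergence of $h(x_i^{(t)})$,'' which makes the argument circular: that convergence is precisely what has to be wrested from the non-summable perturbation.

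The fix is the quantitative excursion bookkeeping the paper carries out instead of any a priori convergence of $F\left(x_i^{(t)}\right)$: Lemma~\ref{lem:en_decay} gives a decrease proportional to the distance travelled whenever $\norm{\grad{F}}>\veps_i$; Lemma~\ref{lem:saddle_neighbor} shows $\norm{\grad{F}}<\veps_i$ confines $x_i$ within $\sqrt{n}\,\veps_i^{1/3}$ of a stationary point; and Lemmas~\ref{lem:op-saddle} and~\ref{lem:same-saddle} compare, over any excursion between such neighborhoods, the maximal possible energy rise (of order $\sqrt{n}\,\veps_i^{4/3}$, coming from this localization plus a one-step bound) with the forced decrease $\tfrac12\veps_i\norm{x_i^{(t_1)}-x_i^{(t_2)}}$, which exceeds it because the stationary points are separated by a distance much larger than $\sqrt{n}\,\veps_i^{1/3}$. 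Note also that your claim ``once the energy is low enough and $\epsilon_t$ small enough, the iterate cannot leave'' is not correct as stated: the iterate can, and for the global theory must, leave neighborhoods of the higher-energy stationary points $0$ and $\pm\sqrt{-\lambda_j}u_j$ with $j>i$; the usable statement is Lemma~\ref{lem:op-saddle} --- after visiting a neighborhood of $s$ the iterate never enters the neighborhood of a stationary point $s'$ with $F(s')\geq F(s)$ --- and, since there are finitely many stationary values, transitions stop and $\veps_i\to 0$ gives convergence. With these replacements your outline becomes the paper's proof; the isolation/connectedness step (4) is then unnecessary (and would in any case require the eigengap condition already implicit in Lemma~\ref{lem:saddle_neighbor}).
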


\begin{theorem} \label{thm:global-conv}
    If Assumption~\ref{assump:init} is satisfied, then the iteration
    \eqref{eq:triofm} converges to $\calX^*$ for all initial points
    besides a set $W$ of measure zero.
\end{theorem}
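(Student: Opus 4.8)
The plan is to combine the three lemmas with a recursive application of the stable manifold theorem, peeling off one column at a time. First I would invoke Lemma~\ref{lem:bounded-domain} so that, under Assumption~\ref{assump:init}, the entire trajectory $X^{(t)}$ stays in the compact domain $\prod_i \{\norm{x_i}\le R_i\}$; this justifies all later compactness and smoothness arguments and fixes uniform bounds on the derivative of the map $T(X) = X - \alpha g(X)$. Next I would handle the first column: by Lemma~\ref{lem:single-global-conv}, $x_1^{(t)} \to \pm\sqrt{-\lambda_1}u_1$ provided $x_1^{(0)} \not\perp u_1$, and the set $\{x_1^{(0)} : x_1^{(0)}\perp u_1\}$ is a hyperplane, hence of measure zero in $\bbR^n$. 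So outside a measure-zero set of initial first columns, $X_1^{(t)}$ converges to $\calX_1^*$.

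The inductive step is the heart of the argument. Suppose that for all initial points outside a measure-zero set, $X_{i-1}^{(t)} \to \calX_{i-1}^*$. Then Lemma~\ref{lem:multi-global-conv} applies and tells us $x_i^{(t)}$ converges to one of the finitely many candidate limits $\{0, \pm\sqrt{-\lambda_j}u_j : j=i,\dots,p\}$. Of these, only $\pm\sqrt{-\lambda_i}u_i$ are consistent with $X_i^{(t)}\to\calX_i^*$; the rest correspond to unstable fixed points of the column-$i$ subsystem (by Theorem~\ref{thm:fixedpt} applied to the shifted matrix $\tA = A - \sum_{j<i}\lambda_j u_j u_j^\top$, whose relevant eigenpairs are $(\lambda_j,u_j)$ for $j\ge i$). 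I would then argue that the set of initializations for which $x_i^{(t)}$ converges to one of these unstable limits has measure zero. This is where the stable manifold theorem (in the version used in \cite{Lee2019,Li2019c}) enters: since $T$ is a local diffeomorphism on the compact invariant domain (the stepsize bound $\alpha\le 1/(10R_p^2)$ keeps $DT = I - \alpha Dg$ invertible), around each unstable fixed point $x^*$ the set of points whose forward orbit converges to $x^*$ is contained in an embedded submanifold of dimension strictly less than the ambient dimension, hence measure zero; taking the countable union over the finitely many unstable limits and over preimages under $T$ preserves measure zero. Combining with the inductive hypothesis via Fubini — the bad set in the $x_i^{(0)}$ fiber is measure zero for a.e.\ choice of $X_{i-1}^{(0)}$ — gives that $X_i^{(t)}\to\calX_i^*$ outside a measure-zero set. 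Running this induction up to $i=p$ yields $X^{(t)}\to\calX_p^* = \calX^*$ off a measure-zero set $W$, which is the claim.

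The main obstacle I anticipate is making the measure-zero bookkeeping genuinely rigorous rather than heuristic. Lemma~\ref{lem:multi-global-conv} is stated conditionally on $X_{i-1}^{(t)}\to\calX_{i-1}^*$ exactly, but a generic trajectory only converges to $\calX_{i-1}^*$; one must check that the perturbation analysis underlying that lemma is robust to the asymptotically vanishing error in the earlier columns (this is the first "niche" flagged in the text, the extra error term in \eqref{eq:x3}), and that the dynamics of $x_i$ still satisfies the hypotheses of the stable manifold theorem in the presence of that decaying coupling — effectively one needs a non-autonomous or asymptotically-autonomous version of the center–stable manifold argument. The second subtlety is the Fubini step: one must argue that conditioning on the (measure-zero-complement) event that $X_{i-1}^{(0)}$ leads to convergence does not conspire to push $x_i^{(0)}$ into its own bad set on a positive-measure slice. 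I would isolate these two points as the technical core and expect the bulk of Section~\ref{sec:proof_details} to be devoted to establishing the robust convergence of Lemma~\ref{lem:multi-global-conv}, after which the stable-manifold/Fubini wrapper is comparatively routine.
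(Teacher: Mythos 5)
Your skeleton matches the paper's: induction over columns, Lemma~\ref{lem:bounded-domain} for invariance, Lemma~\ref{lem:single-global-conv} for the base case, Lemma~\ref{lem:multi-global-conv} to pin down the finitely many possible limits of $x_i^{(t)}$, and the stable manifold theorem in the form of Theorem~2 of \citet{Lee2019}, with the invertibility of $I-\alpha \Diff g$ checked from the stepsize bound. Where you diverge is in the inductive measure-zero step, and there the divergence costs you. You propose to view the column-$i$ dynamics as a non-autonomous system perturbed by the decaying error of the earlier columns, and you flag (correctly, for that route) that you would then need an asymptotically-autonomous center--stable manifold argument plus a Fubini-type slicing over fibers $x_i^{(0)}$ for a.e.\ $X_{i-1}^{(0)}$. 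The paper avoids both issues: because of the triangular structure, the first $i$ columns form an \emph{autonomous} discrete dynamical system in their own right, so Theorem~2 of \citet{Lee2019} is applied directly in the joint space $\calX_i$, with the unstable set $\calA_i^*$ given by Theorem~\ref{thm:fixedpt} for $p=i$ (not by shifting $A$); the bad earlier-column initializations are handled simply as the cylinder set $V_i=\{X_i^{(0)}: X_{i-1}^{(0)}\in W_{i-1}\}$, which has measure zero with no Fubini conditioning, and Lee's theorem is invoked on $\calX_i\setminus V_i$ to show $W_i\setminus V_i$ has measure zero. Finally, your first anticipated obstacle (robustness of Lemma~\ref{lem:multi-global-conv} to only-asymptotic convergence of the earlier columns) is not actually a gap to be closed at the level of this theorem: the hypothesis of that lemma is precisely $\lim_{t\to\infty}\fnorm{X_{i-1}^{(t)}-\calX_{i-1}^*}=0$ along the trajectory, so the vanishing coupling error is already built into its statement, and its role in the proof of Theorem~\ref{thm:global-conv} is only to exclude, for initial points outside $W_i$, every limit other than $\pm\sqrt{-\lambda_i}\,u_i$, since the other candidate limits would force $X_i^{(t)}$ to converge to a point of $\calA_i^*$.
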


\begin{proof}
This theorem is proved by induction. The set of these initial points is
denoted as $\calX_i$ for the first $i$ columns.
Lemma~\ref{lem:bounded-domain} guarantees that $\iota - \alpha g$ maps
points in $\calX_i$ to $\calX_i$, where $\iota$ denotes the identity
operator and $g$ is the operator defined in \eqref{eq:g}. We further
introduce a notation for unstable fixed points as $\calA_i^*$. Recall
Theorem~\ref{thm:fixedpt} for $p = i$, we can characterize $\calA_i^*$ as,
\begin{equation}
    \calA_i^* = \left\{ X \in \calX_i \middle| X =
    U_q \sqrt{-\Lambda_q} P S \text{ and } X \neq U_i
    \sqrt{-\Lambda_i} D \right\}.
\end{equation}

For the first column $x_1^{(t)}$, Lemma~\ref{lem:single-global-conv} shows
that $\lim_{t\rightarrow \infty}x_1^{(t)}=\pm \sqrt{-\lambda_1}u_1$ for
all $x_1^{(0)}$ not perpendicular to $u_1$. Alternatively, it can be
restated as $\lim_{t \rightarrow \infty} \fnorm{X_1^{(t)} - \calX_1^*} =
0$ for all initial points except those in $W_1 = \left\{X_1^{(0)} \middle|
u_1^\top X_1^{(0)} = 0\right\}= \Big\{X_1^{(0)} \Big| \lim_{t
\rightarrow \infty} (\iota - \alpha g)^t \left(X_1^{(0)}\right) \in
\calA_1^* \Big\}$. Obviously the set $W_1$ has measure zero.

Now we assume that the statement of Theorem~\ref{thm:global-conv} holds
for the first $i-1$ columns for $i \in (1, p]$, \ie, $\lim_{t \rightarrow
\infty} \fnorm{X_{i-1}^{(t)} - \calX_{i-1}^*} = 0$ for all initial points
except those in $W_{i-1}$ and the set $W_{i-1}$ has measure zero.

We first define the set $W_i$ for $i$ as,
\begin{equation}
    W_i = \left\{X_i^{(0)} \middle| \lim_{t \rightarrow \infty}
    (\iota - \alpha g)^t \left(X_i^{(0)}\right) \in \calA_i^*
    \right\} \bigcup V_i,
\end{equation}
for $V_i = \left\{X_i^{(0)} \middle| X_{i-1}^{(0)} \in W_{i-1}
\right\}$. Since $W_{i-1}$ has measure zero, we know that the
set $V_i$ also has measure zero. Next we focus on the points in
$\calX_i \setminus V_i$ and $W_i \setminus V_i$.

Here we apply Theorem~2 in \citet{Lee2019} to show that $W_i \setminus
V_i$ has measure zero. All conditions therein must be checked first. Since
the first $k-1$ columns are independent of the $k$-th one, the operator
$\iota - \alpha g$ is smooth and maps $\calX_i \setminus V_i$ to $\calX_i
\setminus V_i$. According to Theorem~\ref{thm:fixedpt}, points in
$\calA_i^*$ are unstable fixed points, so are points in $\calA_i^*
\setminus V_i$. The last thing to check is the invertibility of $\Diff
(\iota - \alpha g) = I - \alpha \Diff g$.  As has been discussed in the
proof of Theorem~\ref{thm:fixedpt} from \cite{Gao2020}, $\Diff g$ is a
block upper triangular matrix and its spectrum is determined by the
spectrum of the diagonal blocks $J_{kk}$ for $k = 1, \dots, i$,
\begin{equation} \label{eq:Jacobiansubmat}
    J_{kk} = A + X_k X_k^\top + x_k^\top x_k I + x_k x_k^\top.
\end{equation}
For points in $\calX_i$, the spectrum norm of $X_k X_k^\top$,
$x_k^\top x_k I$, and $x_k x_k^\top$ are upper bounded by $2R_p^2$,
$R_p^2$, and $R_p^2$ respectively. Further we have $\norm{A} <
\frac{R_p^2}{3}$. Hence, combined with the assumption on $\alpha$,
we have the following bound,
\begin{equation}
    \lambda (\Diff (\iota - \alpha g)) > \frac{1}{2}
\end{equation}
for all $X_i \in \calX_i$, which implies that $\mathrm{det} \left(\Diff
(\iota - \alpha g)\right) \neq 0$ for all $X_i \in \calX_i \setminus V_i$.
Therefore, Theorem~2 in \citet{Lee2019} can be applied, and the set $W_i
\setminus V_i$ has measure zero. Further, the set $W_i$ has measure zero.

Then Lemma~\ref{lem:multi-global-conv} implies that for $X_i^{(0)} \in
\calX_i \setminus W_i$ there is
\begin{equation}
    \lim_{t\rightarrow \infty}x_i^{(t)} = \pm \sqrt{-\lambda_i}u_i.
\end{equation}
Hence we have $\lim_{t \rightarrow \infty} \fnorm{X_i^{(t)} - \calX_i^*} =
0$ for all initial points except those in $W_i$ and the set $W_i$ has
measure zero. By induction, the theorem is proved.
\end{proof}

Theorem~\ref{thm:global-conv} shows the global convergence of \TOM{}
without rate. We do not expect any provable rate of the global convergence
since \TOM{} solves a non-convex problem and has unstable fixed points.

\section{Proofs of Lemmas}
\label{sec:proof_details}

This section proves Lemma~\ref{lem:single-global-conv} and
Lemma~\ref{lem:multi-global-conv}.

\subsection{Proof of Lemma~\ref{lem:single-global-conv}}

The iteration of the first column is a gradient descent method applied to
the single-column version of \eqref{eq:obj}.
Lemma~\ref{lem:single-global-conv} states that the iteration of the first
column converges globally. Combining the energy landscape analysis in
\cite{Gao2020, Li2019c} and the escaping saddle point analysis in
\cite{Lee2019}, the global convergence would be proved. In this section,
we provide another proof of the global convergence, where the convergence
rate is given implicitly. The proof is analog to that of the power method.
The convergence of angle is proved first and then the convergence of
vector length. In the following, $\theta^{(t)}$ denotes the acute angle
between $x_1^{(t)}$ and $\pm u_1$.

\begin{lemma} \label{app:lemma2}
    Assume Assumption~\ref{assump:init} is satisfied and $
    x_1^{(0)}$ is not perpendicular to $u_1$.  Then the tangent of
    $\theta^{(t)} = \angle (x_1^{(t)},u_1)$ linearly converges to 0,
    \ie, $\tan \theta^{(t+1)}\leq \frac{1 - \alpha \lambda_2} {1 -
    \alpha\lambda_1}\tan \theta^{(t)}$.
\end{lemma}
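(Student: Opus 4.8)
The plan is to exploit that the first-column recursion~\eqref{eq:iter1} is a scaled-and-shifted power iteration. Since $x_1^{(t)}(x_1^{(t)})^\top x_1^{(t)} = \norm{x_1^{(t)}}^2 x_1^{(t)}$, the update is $x_1^{(t+1)} = M_t x_1^{(t)}$ with the symmetric matrix $M_t = \beta_t I - \alpha A$, where $\beta_t = 1 - \alpha\norm{x_1^{(t)}}^2$; $M_t$ commutes with $A$ and has eigenpairs $(\mu_i^{(t)}, u_i)$ with $\mu_i^{(t)} = \beta_t - \alpha\lambda_i$. The first step I would carry out is to show every $\mu_i^{(t)}$ is strictly positive. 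By Lemma~\ref{lem:bounded-domain}, $\norm{x_1^{(t)}} \le R_1 = \sqrt{3\rho}$, and Assumption~\ref{assump:init} gives $\alpha \le \frac{1}{10 R_p^2} \le \frac{1}{30\rho}$; hence $\alpha\norm{x_1^{(t)}}^2 \le \frac{1}{10}$ and $\alpha|\lambda_i| \le \alpha\rho \le \frac{1}{10}$, so $\mu_i^{(t)} \ge \frac{4}{5} > 0$ for all $i$, and in particular $\mu_1^{(t)} \ge \frac{4}{5}$.

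Next I would pass to the eigenbasis of $A$. Writing $x_1^{(t)} = \sum_{i=1}^n c_i^{(t)} u_i$, the recursion decouples coordinatewise into $c_i^{(t+1)} = \mu_i^{(t)} c_i^{(t)}$. Since $c_1^{(0)} = u_1^\top x_1^{(0)} \ne 0$ by hypothesis and $\mu_1^{(t)} > 0$, induction yields $c_1^{(t)} \ne 0$, hence $x_1^{(t)} \ne 0$, for every $t$; therefore
\[
  \tan^2\theta^{(t)} = \frac{\sum_{i\ge 2}(c_i^{(t)})^2}{(c_1^{(t)})^2}
\]
is well defined, and one step of the recursion gives
\[
  \tan^2\theta^{(t+1)} = \frac{\sum_{i\ge 2}(\mu_i^{(t)})^2(c_i^{(t)})^2}{(\mu_1^{(t)})^2(c_1^{(t)})^2}.
\]
So it remains to bound $\mu_i^{(t)}/\mu_1^{(t)}$, for $i \ge 2$, by the constant $\frac{1-\alpha\lambda_2}{1-\alpha\lambda_1}$.

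This is the heart of the argument, and it is elementary once positivity is available. For $i \ge 2$ one has $\lambda_i \ge \lambda_2$, so $0 < \mu_i^{(t)} \le \mu_2^{(t)}$, and it suffices to treat $i = 2$. Consider $h(\beta) = \frac{\beta - \alpha\lambda_2}{\beta - \alpha\lambda_1}$; on $\{\beta : \beta > \alpha\lambda_1\}$, which contains $\beta_t$ (since $\beta_t - \alpha\lambda_1 = \mu_1^{(t)} \ge \frac{4}{5}$) and $1$ (since $1 - \alpha\lambda_1 \ge 1 - \frac{1}{10} > 0$), its derivative $h'(\beta) = \frac{\alpha(\lambda_2 - \lambda_1)}{(\beta - \alpha\lambda_1)^2} \ge 0$, so $h$ is nondecreasing there. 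As $\beta_t = 1 - \alpha\norm{x_1^{(t)}}^2 \le 1$, this gives $\frac{\mu_2^{(t)}}{\mu_1^{(t)}} = h(\beta_t) \le h(1) = \frac{1-\alpha\lambda_2}{1-\alpha\lambda_1}$. Substituting $\mu_i^{(t)} \le \frac{1-\alpha\lambda_2}{1-\alpha\lambda_1}\mu_1^{(t)}$ into the one-step formula, pulling the constant out of the sum, and cancelling $(\mu_1^{(t)})^2$ yields $\tan^2\theta^{(t+1)} \le \left(\frac{1-\alpha\lambda_2}{1-\alpha\lambda_1}\right)^2\tan^2\theta^{(t)}$; taking square roots completes the proof.

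The only non-automatic step is the strict positivity of all $\mu_i^{(t)}$, not just of $\mu_1^{(t)}$: if some $\mu_i^{(t)}$ with $i \ge 2$ were negative with large magnitude (which a large positive $\lambda_i$ would threaten), then $|\mu_i^{(t)}|$ would fail to be dominated by $\mu_2^{(t)}$ and the monotonicity argument would break. This is precisely what the stepsize cap in Assumption~\ref{assump:init}, combined with the a priori bound $\norm{x_1^{(t)}} \le R_1$ of Lemma~\ref{lem:bounded-domain}, rules out. I would also remark that when $\lambda_1 = \lambda_2$ the estimate degenerates to $\tan\theta^{(t+1)} \le \tan\theta^{(t)}$ — still valid, but not contractive; the strict gap $\lambda_1 < \lambda_2$ is what turns this into a genuine linear rate and will be used downstream in Lemma~\ref{lem:single-global-conv}.
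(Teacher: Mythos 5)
Your proof is correct and follows essentially the same route as the paper's: diagonalize $A$ (equivalently, expand in the eigenbasis), observe that the update scales the $k$-th coordinate by $1-\alpha\lambda_k-\alpha\norm{x_1^{(t)}}^2$, and bound the ratio of these factors by $\frac{1-\alpha\lambda_2}{1-\alpha\lambda_1}$ using the positivity guaranteed by the stepsize bound and Lemma~\ref{lem:bounded-domain}. Your explicit verification that all factors $\mu_i^{(t)}$ stay positive (and that $c_1^{(t)}\neq 0$ so the tangent is well defined) just makes precise what the paper asserts directly from Assumption~\ref{assump:init}.
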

 
\begin{proof}
The eigenvectors of the symmetric matrix $A$ are orthonormal. Both the
2-norm and the angle $\theta^{(t)} = \angle (x_1^{(t)},u_1) = \angle
(U^\top x_1^{(t)},e_1)$ are invariant to orthogonal transform. Without
loss of generality, we assume that $A$ is a diagonal matrix with its
diagonal entries being $\lambda_1, \dots, \lambda_n$ and the corresponding
eigenvectors are $e_1, \dots, e_n$. Further, we drop the iteration index
in the superscript and denote the following iteration variables with
$\widetilde{\cdot}$. The first column of $X^{(t)}$ iterates as,
\begin{equation}
    \tx_{1} = \left(I - \alpha A - \alpha x_1^\top x_1 I\right)x_1.
\end{equation}
Let $x_{1k}$ and $\tx_{1k}$ denotes the $k$-th element of $x_1$ and
$\tx_1$ respectively. Then we have,
\begin{equation} \label{eq:elementiter}
    \tx_{1k} = \left(1 - \alpha \lambda_k - \alpha r_1^2 \right) x_{1k},
\end{equation}
where $r_1$ denotes the norm of $x_1$.

The tangent of $\ttheta = \angle (\tx_1, e_1)$ can be written in terms of
elements of $\tx_1$ and be bounded as,
\begin{align} \label{eq:thetaineq}
    \tan \ttheta & = \frac{ \sqrt{ \tx_{12}^2 + \tx_{13}^2 + \cdots +
    \tx_{1n}^2 } }{ \abs{\tx_{11}} } \notag\\
    & = \frac{ \sqrt{ \left(1 - \alpha \lambda_2 -
    \alpha r_1^2\right)^2 x_{12}^2 + \cdots + \left(1 - \alpha
    \lambda_n - \alpha r_1^2\right)^2 x_{1n}^2 } }{ \left(1 -
    \alpha \lambda_1 - \alpha r_1^2\right) \abs{x_{11}} } \notag\\
    & \leq \frac{1 - \alpha \lambda_2 - \alpha r_1^2}{1 - \alpha
    \lambda_1 - \alpha r_1^2} \left( \frac{ \sqrt{ x_{12}^2 +
    \cdots + x_{1n}^2 } }{ \abs{x_{11}} } \right)
    \leq \frac{1 -
    \alpha \lambda_2}{1 - \alpha \lambda_1} \tan \theta,
\end{align}
where the assumption on $\alpha$ guarantees the positivity of $1 - \alpha
\lambda_1 - \alpha r_1^2$ and $1 - \alpha \lambda_2 - \alpha r_2^2$.

Applying \eqref{eq:thetaineq} recursively, we prove the lemma.
\end{proof}
 
Lemma~\ref{app:lemma2} shows the linear convergence for the tangent of the
angle between $x_1^{(t)}$ and $u_1$. Next, we would focus on the
convergence of the vector length.
 
\begin{lemma} \label{app:lemma3}
    Assume Assumption~\ref{assump:init} is satisfied and $ x_1^{(0)}$
    is not perpendicular to $u_1$. Then there exists an integer $N$
    such that $\norm{x_1^{(t)}} \geq \frac{\sqrt{-2\lambda_q}}{4}$
    holds for all $t \geq N$.
\end{lemma}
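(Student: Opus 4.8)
The plan is to work in the coordinates that diagonalize $A$, exactly as in the proof of Lemma~\ref{app:lemma2}: we may assume $A = \Lambda$ and $u_1 = e_1$, so that the first column evolves coordinatewise as $x_{1k}^{(t+1)} = (1 - \alpha\lambda_k - \alpha (r_1^{(t)})^2)\,x_{1k}^{(t)}$ with $r_1^{(t)} = \norm{x_1^{(t)}}$. Since $\norm{x_1^{(t)}} \geq \abs{x_{11}^{(t)}}$, it suffices to bound the single coordinate $\abs{x_{11}^{(t)}}$ from below; it obeys the scalar recursion $x_{11}^{(t+1)} = m^{(t)}\,x_{11}^{(t)}$ with $m^{(t)} = 1 + \alpha\bigl(-\lambda_1 - (r_1^{(t)})^2\bigr)$. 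First I would collect the crude bounds that hold at every step: Lemma~\ref{lem:bounded-domain} gives $r_1^{(t)} \leq R_1 = \sqrt{3\rho}$, while Assumption~\ref{assump:init} together with $R_p \geq R_1$ gives $\alpha \leq 1/(10R_p^2) \leq 1/(30\rho)$; since $\lambda_1 < 0$ this yields $m^{(t)} \geq 1 - \alpha (r_1^{(t)})^2 \geq 1 - 3\alpha\rho \geq 9/10 > 0$. In particular the multipliers are positive, so $x_{11}^{(t)}$ never vanishes (it is nonzero at $t = 0$ by the non-perpendicularity hypothesis), and $\abs{x_{11}}$ can lose at most the factor $9/10$ in one step.

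Next I would inject the angle decay. By Lemma~\ref{app:lemma2}, the acute angle $\theta^{(t)} = \angle(x_1^{(t)}, u_1)$ satisfies $\tan\theta^{(t)} \to 0$, so there is an index $N_1$ with $\tan^2\theta^{(t)} \leq 1$ for all $t \geq N_1$; because $(r_1^{(t)})^2 = (1 + \tan^2\theta^{(t)})(x_{11}^{(t)})^2$, this gives $(r_1^{(t)})^2 \leq 2(x_{11}^{(t)})^2$ once $t \geq N_1$. Set $\tau = \sqrt{-\lambda_1}/2$, so $\tau^2 = -\lambda_1/4$. For $t \geq N_1$, whenever $\abs{x_{11}^{(t)}} \leq \tau$ we then have $(r_1^{(t)})^2 \leq 2\tau^2 = -\lambda_1/2$, hence $m^{(t)} \geq 1 + \alpha(-\lambda_1)/2 > 1$, so $\abs{x_{11}^{(t)}}$ strictly increases by at least this fixed factor. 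Since $\abs{x_{11}^{(N_1)}} > 0$, the sequence cannot remain $\leq \tau$ for all $t \geq N_1$, so there is a first time $t_0 \geq N_1$ at which $\abs{x_{11}^{(t_0)}} > \tau$.

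The last step is to propagate the bound forward. I claim the region $\{\,\abs{x_{11}^{(t)}} > \frac{9}{10}\tau\,\}$ is forward invariant for $t \geq N_1$. If $\frac{9}{10}\tau < \abs{x_{11}^{(t)}} \leq \tau$, then $\abs{x_{11}^{(t)}} \leq \tau$ puts us in the growth regime above, so $m^{(t)} \geq 1$ and $\abs{x_{11}^{(t+1)}} \geq \abs{x_{11}^{(t)}} > \frac{9}{10}\tau$; if instead $\abs{x_{11}^{(t)}} > \tau$, then $\abs{x_{11}^{(t+1)}} \geq \frac{9}{10}\abs{x_{11}^{(t)}} > \frac{9}{10}\tau$. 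Hence for all $t \geq t_0$ we have $\abs{x_{11}^{(t)}} > \frac{9}{10}\tau = \frac{9}{20}\sqrt{-\lambda_1}$, and therefore
\[
    \norm{x_1^{(t)}} \;\geq\; \abs{x_{11}^{(t)}} \;>\; \frac{9}{20}\sqrt{-\lambda_1} \;\geq\; \frac{9}{20}\sqrt{-\lambda_q} \;\geq\; \frac{1}{4}\sqrt{-2\lambda_q},
\]
where the last two inequalities use $-\lambda_1 \geq -\lambda_q > 0$ and $(9/20)^2 = 81/400 \geq 1/8$. Taking $N = t_0$ proves the lemma.

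The step I expect to be the real obstacle is this last one: $\abs{x_{11}}$ is not monotone, so one cannot simply argue ``once large it stays large'' --- a single step can shrink it. The remedy is to combine the genuine geometric growth that holds below the intermediate radius $\tau$ with the at-most-$9/10$ per-step contraction to produce a truly forward-invariant floor, at the slightly lower level $\frac{9}{10}\tau$, and then to check that this eroded floor still clears the target $\frac14\sqrt{-2\lambda_q}$. Both are arranged by choosing $\tau$ comfortably below the fixed-point radius $\sqrt{-\lambda_1}$, which is possible precisely because the target radius is only a fixed small multiple of $\sqrt{-\lambda_q} \leq \sqrt{-\lambda_1}$.
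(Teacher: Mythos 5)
Your argument is correct: the scalar recursion $x_{11}^{(t+1)} = (1-\alpha\lambda_1-\alpha (r_1^{(t)})^2)x_{11}^{(t)}$ with the crude per-step bound $m^{(t)}\geq 1-3\alpha\rho\geq 9/10$, the growth factor $1+\alpha\frac{-\lambda_1}{2}$ whenever $\abs{x_{11}}\leq\tau=\tfrac{1}{2}\sqrt{-\lambda_1}$ (valid once $\tan^2\theta\leq 1$), and the forward invariance of $\{\abs{x_{11}}>\tfrac{9}{10}\tau\}$ all check out, and the final numerical comparison $\tfrac{9}{20}\sqrt{-\lambda_1}\geq\tfrac{1}{4}\sqrt{-2\lambda_q}$ is right. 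However, your route is genuinely different from the paper's. The paper never invokes Lemma~\ref{app:lemma2} here: it splits $x_1$ into the block $y_1$ of coordinates attached to the $q$ negative eigenvalues and the block $y_2$ of the remaining coordinates, shows each entry of $y_2$ decays monotonically to zero, shows $\norm{y_1}$ grows whenever it is below $\tfrac{1}{2}\sqrt{-2\lambda_q}$, and then runs a two-case invariance argument directly on $\norm{y_1}$. What your approach buys is simplicity (a single scalar coordinate) and a stronger floor of order $\sqrt{-\lambda_1}$ rather than $\sqrt{-\lambda_q}$; the cost is that the lemma now inherits the hypotheses behind Lemma~\ref{app:lemma2} — in particular its linear factor $\frac{1-\alpha\lambda_2}{1-\alpha\lambda_1}$ is only $<1$ when $\lambda_1<\lambda_2$, so your proof implicitly uses a spectral gap that the paper's blockwise argument does not need. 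The paper's version also serves as the template for the multicolumn analogue (Lemma~\ref{app:lemma6}), where one does not know in advance which eigendirection the column will align with, so tracking a whole block rather than a single coordinate is what generalizes.
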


\begin{proof}
Without loss of generality, we assume $A$ is diagonal as in the proof of
Lemma~\ref{app:lemma2} and the same notations are used here. We split
$x_1$ into two vectors as $x_1 = \begin{pmatrix} y_1^\top & y_2^\top
\end{pmatrix}^\top$ where $y_1 = \begin{pmatrix}x_{11} & \cdots &
x_{1q}\end{pmatrix}^\top$ and $y_2 = \begin{pmatrix}x_{1(q+1)} & \cdots &
x_{1n}\end{pmatrix}^\top$.

The proof consists of two parts. In the first part, we show that there
exist an iteration $N_1$, such that $\norm{y_1^{(N_1)}} \geq
\frac{\sqrt{-2\lambda_q}}{2}$ and $\norm{y_2^{(N_1)}} \leq
\frac{\sqrt{-2\lambda_q}}{4}$. In the second part, we show that as long as
the condition in the first part is satisfied, the length of $x_1$ will
never go below $\frac{\sqrt{-2\lambda_q}}{4}$.

Notice that \eqref{eq:elementiter} is bounded as,
\begin{equation} \label{eq:x1idecrease}
    \abs{\tx_{1k}} \leq \left(1 - \alpha \lambda_k - \alpha
    (x_{1k})^2 \right) \abs{x_{1k}}
\end{equation}
for $k = q+1, \dots, n$. All entries $\abs{x_{1k}}$ decays monotonically
to zero for $k > q$. Hence there exists an integer $M$ such that for any
$t \geq M$ we have $\norm{y_2^{(t)}} \leq \frac{\sqrt{-2\lambda_q}}{4}$.
Further, for $t \geq M$, if $\norm{y_1} \leq
\frac{\sqrt{-2\lambda_q}}{2}$, we have,
\begin{equation} \label{eq:x11increase}
    \abs{\tx_{1k}} \geq \left(1 - \alpha \lambda_k + \alpha
    \frac{5\lambda_q}{8} \right) \abs{x_{1k}},
\end{equation}
for $k = 1, \dots, q$, where the increasing factors are strictly greater
than one. Also we have $x_{11}^{(0)} = x_1^\top u_1 \neq 0$ in the
assumption. Considering the choice of $\alpha$,
$x_{11}^{(t)}$ remains nonzero throughout iterations.
Hence there exists a integer $N \geq M$ such that $\norm{y_1^{(N)}} \geq
\frac{\sqrt{-2\lambda_q}}{2}$ and $\norm{y_2^{(N)}} \leq
\frac{\sqrt{-2\lambda_q}}{4}$.

Next, for any $t \geq N$, if $\norm{y_1} \geq
\frac{\sqrt{-2\lambda_q}}{2}$, then we have,
\begin{equation}
    \norm{\ty_1} \geq (1 - \alpha r_1^2)
    \norm{y_1}
    \geq \left(1 - \frac{R_1^2}{5R_p^2} \right)
    \norm{y_1}
    \geq \frac{\sqrt{-2\lambda_q}}{4},
\end{equation}
where we adopt Lemma~\ref{lem:bounded-domain} and the assumption on
$\alpha$ in the second inequality. Such a relation means that as long as
$\norm{y_1} \geq \frac{\sqrt{-2\lambda_q}}{2}$, the length of the vector
in the next iteration is lower bounded by $\frac{\sqrt{-2\lambda_q}}{4}$.

When $\frac{\sqrt{-2\lambda_q}}{4} \leq \norm{y_1} \leq
\frac{\sqrt{-2\lambda_q}}{2}$ and $\norm{y_2} \leq
\frac{\sqrt{-2\lambda_q}}{4}$, we have,
\begin{equation}
    \norm{\ty_1} \geq (1 - \alpha \lambda_q - \alpha r_1^2)
    \norm{y_1} \geq \left(1 - \frac{3}{8} \alpha \lambda_q \right)
    \norm{y_1} \geq \frac{\sqrt{-2\lambda_q}}{4}.
\end{equation}

Hence, as long as $t \geq N$, we have $\norm{x_1^{(t)}} \geq
\norm{y_1^{(t)}} \geq \frac{\sqrt{-2\lambda_q}}{4}$.
\end{proof}

Now we are ready to prove Lemma~\ref{lem:single-global-conv} due to the
fact that angle always converges and the norm is lower-bounded away from
zero.

\begin{proof} ({\bf Proof of Lemma~\ref{lem:single-global-conv}}) Without loss of generality, we assume $A$ is diagonal as in the proof of
Lemma~\ref{app:lemma2} and the same notations are used here.

According to Lemma~\ref{app:lemma2}, the tangent of $\theta$ converges to
zero, \ie,
\begin{equation}
    \tan \theta = \frac{ \sqrt{ x_{12}^2 + x_{13}^2 + \cdots +
    x_{1n}^2 } }{ \abs{x_{11}} } \rightarrow 0.
\end{equation}
Lemma~\ref{lem:bounded-domain} implies the boundedness of
$x_1$, which implies the boundedness of $x_{11}$. Hence we have,
\begin{equation}
    \sqrt{ x_{12}^2 + x_{13}^2 + \cdots + x_{1n}^2 } \rightarrow 0.
\end{equation}
To simplify the notation, we denote $\eta$ as $\eta = \sqrt{x_{12}^2 +
x_{13}^2 + \cdots + x_{1n}^2 }$. The convergence of $\eta$ can be stated
as follows. For any $\veps \leq \min \left( \frac{\sqrt{-\lambda_q}}{4},
\frac{\sqrt{\lambda_1 \lambda_q}}{8R_1} \right)$, there exists an integer
$N_1$ such that for any $t \geq N_1$, we have $\eta^2 \leq \veps^2$. Also
recall Lemma~\ref{app:lemma3}, there exists an integer $N_2$, such that
for any $t \geq N_2$, we have $\norm{x_1^{(t)}} \geq
\frac{\sqrt{-2\lambda_q}}{4}$.  Combining the bounds on $\eta$ and
$\norm{x^{(t)}_1}$, we have,
\begin{equation}
    \left( x^{(t)}_{11} \right)^2 =
    \norm{x^{(t)}_1}^2 - \eta^2 \geq -\frac{\lambda_q}{8} - \veps^2 \geq
    -\frac{\lambda_q}{16}
\end{equation}
for $t \geq M = \max (N_1, N_2)$.

Since the assumption on stepsize $\alpha$ guarantees the positivity of
$(1-\alpha \lambda_1 - \alpha r_1^2)$, $\tx_{11} = (1 - \alpha \lambda_1 -
\alpha r_1^2)x_{11}$ remains the same sign as $x_{11}$ and the same as
$x_{11}^{(0)}$. We first discuss the scenario $x_{11}^{(0)} > 0$.

Let $\delta^{(t)} = x_{11}^{(t)} - \sqrt{-\lambda_1}$. We have the
relationship,
\begin{align}
    \delta^{(t+1)} & = x_{11}^{(t+1)} - \sqrt{-\lambda_1}
    = \left( 1 - \alpha \left( \lambda_1 +
    \left(x_{11}^{(t)}\right)^2 + \left(\eta^{(t)}\right)^2
    \right) \right) x_{11}^{(t)} - \sqrt{-\lambda_1} \notag\\
    & = \left( 1 - \alpha \left( \sqrt{-\lambda_1} +
    x_{11}^{(t)}\right) x_{11}^{(t)} \right) \delta^{(t)} -
    \alpha \left(\eta^{(t)}\right)^2 x_{11}^{(t)}.
\end{align}
Taking the absolute value of both side, we obtain the inequality,
\begin{align}
    \abs{\delta^{(t+1)}}
    \leq & \left( 1 - \alpha \left( \sqrt{-\lambda_1} +
    x_{11}^{(t)}\right) x_{11}^{(t)}
    \right) \abs{\delta^{(t)}} + \alpha
    \left(\eta^{(t)}\right)^2 x_{11}^{(t)} \notag\\
    \leq & \left( 1 - \alpha \frac{\sqrt{\lambda_1 \lambda_q}}{4}
    \right) \abs{\delta^{(t)}} + \alpha
    \veps^2 R_1 \notag\\
    \leq & \left( 1 - \alpha \frac{\sqrt{\lambda_1 \lambda_q}}{4}
    \right)^{t+1-M} \abs{\delta^{(M)}} + 
    \veps^2 \frac{4R_1}{\sqrt{\lambda_1
    \lambda_q}} \notag\\
    \leq & \left( 1 - \alpha \frac{\sqrt{\lambda_1 \lambda_q}}{4}
    \right)^{t+1-M} \abs{\delta^{(M)}} + 
    \frac{\veps}{2}. 
\end{align}
Hence there exists an integer $N \geq M$ such that for any $t \geq N$,
$\abs{\delta^{(t)}} \leq \veps$.

If $x_{11}^{(0)} < 0$, the iteration converges to $-\sqrt{-\lambda_1}$.
The analysis is analog to the above one. The lemma is proved.
\end{proof}

\subsection{Proofs of Lemma~\ref{lem:multi-global-conv}}

We give two proofs of Lemma~\ref{lem:multi-global-conv} in
Section~\ref{sec:proof1} and Section~\ref{sec:proof2} respectively. The
first proof is inspired by the noisy power method and follows closely as
that of Lemma~\ref{app:lemma2}. The second proof is related to the noisy
optimization method. We give the two proofs to hint at global convergence
proofs for other algorithms in \TOM{} family.

\subsubsection{Proof inspired by noisy power method}
\label{sec:proof1}

We now turn to the multicolumn case. When we are proving the multicolumn
case, we first assume the fact that all previous columns have converged to
global minima, \ie,
\begin{equation}
    \lim_{t \rightarrow \infty} \fnorm{ X_{i-1}^{(t)} - \calX^*_{i-1}
    } = 0.
\end{equation}
In the following, we first prove a few lemmas to support the proof of
Lemma~\ref{lem:multi-global-conv}.

\begin{lemma}\label{app:lemma5}
    Assume Assumption~\ref{assump:init} is satisfied and $\lim_{t
    \rightarrow \infty} \fnorm{ X_{i-1}^{(t)} - \calX^*_{i-1} } = 0$. Then
    $\lim_{t \rightarrow \infty} u_k^\top x_i^{(t)} = 0$ for all integer
    $k \in [1, i) \bigcup (q, n]$.
\end{lemma}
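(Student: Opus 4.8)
The plan is to rewrite the update \eqref{eq:iteri} for $x_i$ as a perturbed linear recursion driven by the limit of the first $i-1$ columns, and then project onto the coordinates $u_k$ with $k\in[1,i)\cup(q,n]$, which will turn out to be damped. Using $\sum_{j=1}^{i}x_j x_j^{\top}=X_{i-1}X_{i-1}^{\top}+x_i x_i^{\top}$, the iteration reads $x_i^{(t+1)}=\big(I-\alpha A-\alpha X_{i-1}^{(t)}(X_{i-1}^{(t)})^{\top}-\alpha\norm{x_i^{(t)}}^{2}I\big)x_i^{(t)}$. The crucial observation is that every $Y\in\calX^{*}_{i-1}$ is of the form $U_{i-1}\sqrt{-\Lambda_{i-1}}D_{i-1}$ with $D_{i-1}^{2}=I$, hence $YY^{\top}=U_{i-1}(-\Lambda_{i-1})U_{i-1}^{\top}$ regardless of the signs; so the hypothesis $\fnorm{X_{i-1}^{(t)}-\calX^{*}_{i-1}}\to0$, the finiteness of $\calX^{*}_{i-1}$, and the uniform boundedness of $X_{i-1}^{(t)}$ from Lemma~\ref{lem:bounded-domain} together force $X_{i-1}^{(t)}(X_{i-1}^{(t)})^{\top}\to U_{i-1}(-\Lambda_{i-1})U_{i-1}^{\top}$. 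Setting $\widehat A:=A+U_{i-1}(-\Lambda_{i-1})U_{i-1}^{\top}=\sum_{k\ge i}\lambda_k u_k u_k^{\top}$ and $\Delta^{(t)}:=U_{i-1}(-\Lambda_{i-1})U_{i-1}^{\top}-X_{i-1}^{(t)}(X_{i-1}^{(t)})^{\top}$, the recursion becomes
\[
    x_i^{(t+1)}=\big(I-\alpha\widehat A-\alpha\norm{x_i^{(t)}}^{2}I\big)x_i^{(t)}+\alpha\Delta^{(t)}x_i^{(t)},\qquad\norm{\Delta^{(t)}}\to0,
\]
where $\widehat A$ shares the eigenvectors of $A$, with eigenvalue $0$ on $u_1,\dots,u_{i-1}$ and eigenvalue $\lambda_k$ on $u_k$ for $k\ge i$.

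I would then project onto $u_k$ for $k\in[1,i)\cup(q,n]$. For such $k$ the associated eigenvalue $\widehat\lambda_k$ of $\widehat A$ is nonnegative — it equals $0$ when $k<i$, and $\lambda_k\ge0$ when $k>q$, since $q$ is the number of negative eigenvalues — so Assumption~\ref{assump:init} and $\norm{x_i^{(t)}}\le R_p$ yield $0\le1-\alpha\widehat\lambda_k-\alpha\norm{x_i^{(t)}}^{2}\le1-\alpha\norm{x_i^{(t)}}^{2}$. Writing $\beta^{(t)}:=\big(\sum_{k\in[1,i)\cup(q,n]}(u_k^{\top}x_i^{(t)})^{2}\big)^{1/2}$, the fact that the contraction factor $1-\alpha\norm{x_i^{(t)}}^{2}$ does not depend on $k$, together with the triangle inequality and $\norm{\Delta^{(t)}x_i^{(t)}}\le\norm{\Delta^{(t)}}R_p$, gives $\beta^{(t+1)}\le(1-\alpha\norm{x_i^{(t)}}^{2})\beta^{(t)}+\alpha R_p\norm{\Delta^{(t)}}$; since $\norm{x_i^{(t)}}^{2}\ge(\beta^{(t)})^{2}$ this is a self-damped scalar recursion
\[
    \beta^{(t+1)}\le\beta^{(t)}-\alpha(\beta^{(t)})^{3}+\gamma^{(t)},\qquad\gamma^{(t)}:=\alpha R_p\norm{\Delta^{(t)}}\to0,\qquad0\le\beta^{(t)}\le R_p .
\]
To finish, I would establish the elementary fact that any sequence satisfying $\beta^{(t+1)}\le\beta^{(t)}-c(\beta^{(t)})^{3}+\gamma^{(t)}$ with $c>0$, $0\le\beta^{(t)}\le C_0$, $\gamma^{(t)}\ge0$ and $\gamma^{(t)}\to0$ tends to $0$: for fixed $\delta>0$, once $\gamma^{(t)}\le c\delta^{3}/2$, a step with $\beta^{(t)}>\delta$ strictly decreases $\beta$ by at least $c\delta^{3}/2$, so $\beta^{(t)}$ must drop below $\delta$; a two-case induction then shows it never again exceeds $\delta+c\delta^{3}/2$, whence $\limsup_t\beta^{(t)}\le\delta+c\delta^{3}/2$, and letting $\delta\downarrow0$ gives $\beta^{(t)}\to0$. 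This is precisely $u_k^{\top}x_i^{(t)}\to0$ for all $k\in[1,i)\cup(q,n]$.

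The step I expect to be the main obstacle is the last one: controlling the regime where $\norm{x_i^{(t)}}$, hence the self-damping $\alpha\norm{x_i^{(t)}}^{2}$, is small. There the linear contraction factor is close to $1$ and $\gamma^{(t)}$ is only known to vanish, not to be summable, so any naive monotonicity or geometric-series argument fails; the cubic scalar lemma circumvents this by exploiting that the coordinates we are killing feed into $\norm{x_i^{(t)}}$, which upgrades the damping to $\alpha(\beta^{(t)})^{3}$. A subsidiary technical point is the limit $X_{i-1}^{(t)}(X_{i-1}^{(t)})^{\top}\to U_{i-1}(-\Lambda_{i-1})U_{i-1}^{\top}$ from set-convergence alone; it works only because the quantity entering the $x_i$-update is $X_{i-1}X_{i-1}^{\top}$, which is constant across $\calX^{*}_{i-1}$ — this is also exactly why the excluded middle coordinates $u_i,\dots,u_q$, where $\widehat\lambda_k<0$, cannot be treated the same way.
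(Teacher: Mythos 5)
Your proposal is correct and follows essentially the same route as the paper's proof: the same decomposition of the update into $\tA = A - \sum_{k<i}\lambda_k u_k u_k^\top$ plus a vanishing perturbation coming from the first $i-1$ columns (your $\Delta^{(t)}$ is the paper's $-E^{(t)}$), the same observation that the eigenvalues of $\tA$ on the directions $u_k$, $k \in [1,i)\cup(q,n]$, are nonnegative, and the same self-damping trick of bounding $\alpha\norm{x_i^{(t)}}^2$ below by the square of the quantity being driven to zero. The only difference is bookkeeping: the paper runs a two-regime $\veps$-argument coordinate-by-coordinate on $\abs{u_k^\top x_i^{(t)}}$, whereas you aggregate these coordinates into the single norm $\beta^{(t)}$ and close with a cubic-damped scalar recursion lemma; both finishes are sound.
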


\begin{proof}
First we will introduce some notations. Let $E$ be the symmetric residual
matrix of the first $i-1$ columns, \ie, $E^{(t)} = \sum_{k = 1}^{i-1}
\left(x_k^{(t)} \left(x_k^{(t)}\right)^\top + \lambda_k u_k
u_k^\top\right)$, and $E_k^{(t)}$ denote the $k$-th column of $E^{(t)}$.
The convergence of $X^{(t)}_{i-1}$ implies that $\lim_{t \rightarrow
\infty} \fnorm{E^{(t)}} = 0$ and hence $\lim_{t \rightarrow \infty}
\norm{E_k^{(t)}} = 0$ for any $k = 1, 2, \dots, i-1$. Using the notation
$E^{(t)}$ and \eqref{eq:triofm}, the iteration for the $i$-th column of
$X^{(t)}$ can be written as,
\begin{equation} \label{eq:it-for-xi}
    x_i^{(t+1)} = \left(I - \alpha \tA - \alpha \left(x_i^
    {(t)}\right)^\top x_i^{(t)} I \right) x_i^{(t)}-\alpha
    E^{(t)} x_i^{(t)},
\end{equation}
where $\tA = A - \sum_{k=1}^{i-1} \lambda_k u_k u_k^\top$.

Without loss of generality, we assume $A$ is diagonal. In the following,
we consider the convergence of $x_{ik}^{(t)}$ for $k \in [1, i) \bigcup
(q, n]$. For any $\veps < \sqrt{\frac{2}{\alpha}}$, there exists $N$ such
that $\norm{E^{(t)}_k} < \frac{\veps^3}{2R_i}$ holds for all $t \geq N$.
Multiplying $e_k^\top$ on both sides of \eqref{eq:it-for-xi}, we have,
\begin{align}
    \abs{ x_{ik}^{(t+1)} } & = \abs{ \left( 1 - \alpha \left(
    x_i^{(t)} \right)^\top x_i^{(t)} \right)x_{ik}^{(t)} -
    \alpha e_k^\top \tA x_{i}^{(t)} -
    \alpha \left( E_k^{(t)} \right)^\top x_i^{(t)}} \notag\\
    & \leq \left(1 - \alpha \left( x_{ik}^{(t)} \right)^2
    \right) \abs{x_{ik}^{(t)}} + \alpha \norm{E_k^{(t)}}
    \norm{x_i^{(t)}}
    \leq \left( 1 - \alpha \left(x_{ik}^{(t)}\right)^2 +
    \frac{\alpha \veps^3}{2 \abs{ x_{ik}^{(t)}} }\right)
    \abs{ x_{ik}^{(t)} },
\end{align}
for $k \in [1, i) \bigcup (q, n]$, where we adopt the assumption on
$\alpha$ and Cauchy-Schwartz inequality.

When $\abs{x_{ik}^{(t)} } > \veps$, the inequality can be bounded as $\abs{
x_{ik}^{(t+1)} } \leq \left(1 - \frac{\alpha \veps^2}{2} \right) \abs{
x_{ik}^{(t)} }$, which means $\abs{ x_{ik}^{(t)} }$ decays exponentially
with the factor $1 - \frac{\alpha \veps^2}{2}$. On the other hand, if there
is a $t$ such that $\abs{ x_{ik}^{(t)} } \leq \veps$, then the quantity in
the following iteration is upper bounded by
\begin{equation}
    \abs{ x_{ik}^{(t+1)} } \leq \left(1+\alpha
    R_i^2\right)\veps + \frac{\alpha \veps^3}{2} \leq \left(
    2 + \alpha R_i^2 \right) \veps,
\end{equation}
where the second inequality holds due to $\veps <
\sqrt{\frac{2}{\alpha}}$.

Hence we conclude that, for any $\veps$, there exist a constant
$N^{\prime} > N$, such that $\abs{ x_{ik}^{(t)} } \leq \left( 2 + \alpha
R_i^2 \right) \veps$ holds for all $t \geq N^{\prime}$. Thus we have
$\lim_{t \rightarrow \infty} u_k^\top x_i^{(t)} = 0$ for all $k \in [1, i)
\bigcup (q, n]$.
\end{proof}

Lemma~\ref{app:lemma6} is the multicolumn version of
Lemma~\ref{app:lemma3}.

\begin{lemma}\label{app:lemma6}
    Assume Assumption~\ref{assump:init} is satisfied and $\lim_{t
    \rightarrow \infty} \fnorm{ X_{i-1}^{(t)} - \calX^*_{i-1}} = 0$. If
    there exists an integer $k \in [i, q]$ such that $u_i^\top
    x_{i}^{(t)}$ does not converge to zero, then there exists an integer
    $N$ such that $\norm{x_k^{(t)}}\geq \frac{\sqrt{-2\lambda_q}}{4}$
    holds for all $t > N$.
\end{lemma}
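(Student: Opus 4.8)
The plan is to run the proof of Lemma~\ref{app:lemma3} one level up, now tracking the shifted matrix $\tA = A - \sum_{k<i}\lambda_k u_k u_k^\top$ that governs the iteration~\eqref{eq:it-for-xi} of $x_i^{(t)}$ together with the vanishing perturbation $-\alpha E^{(t)}x_i^{(t)}$. As in the earlier lemmas I assume $A$ diagonal, so $\tA=\mathrm{diag}(0,\dots,0,\lambda_i,\dots,\lambda_n)$ with the first $i-1$ diagonal entries zeroed out. Split $x_i^{(t)}=(z_1^{(t)};z_2^{(t)};z_3^{(t)})$ along the coordinate blocks $\{1,\dots,i-1\}$, $\{i,\dots,q\}$, and $\{q+1,\dots,n\}$. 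By Lemma~\ref{app:lemma5}, $\norm{z_1^{(t)}}\to 0$ and $\norm{z_3^{(t)}}\to 0$; by Lemma~\ref{lem:bounded-domain} and Assumption~\ref{assump:init}, $\norm{x_i^{(t)}}\le R_i$ and $\alpha R_i^2\le\frac{1}{10}$; and $\fnorm{X_{i-1}^{(t)}-\calX^*_{i-1}}\to 0$ gives $\fnorm{E^{(t)}}\to 0$. The decisive structural fact is that on the $z_2$-block each coordinate is multiplied by $1-\alpha\lambda_k-\alpha\norm{x_i^{(t)}}^2$ with $k\in[i,q]$, so $\lambda_k<0$: this factor is bounded away from $0$ unconditionally and strictly exceeds $1$ whenever $\norm{x_i^{(t)}}^2<-\lambda_q$. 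The hypothesis, that $u_m^\top x_i^{(t)}$ fails to converge to zero for some coordinate $m\in[i,q]$, plays the role of the condition in Lemma~\ref{app:lemma3} that $x_1^{(0)}$ is not perpendicular to $u_1$, but must be phrased as a non-vanishing rather than a nonzero condition because the perturbation can push $x_{im}^{(t)}$ through zero.

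First I would show that $\norm{z_2^{(t)}}$ eventually reaches $\frac{\sqrt{-2\lambda_q}}{2}$. Put $\delta=\frac{1}{2}\limsup_t\abs{x_{im}^{(t)}}>0$ and choose $N$ large enough that for all $t\ge N$ the tails $\norm{z_1^{(t)}}$, $\norm{z_3^{(t)}}$, and $\fnorm{E^{(t)}}$ lie below all of the (finitely many, explicit) thresholds invoked below; in particular so that $\norm{z_2^{(t)}}\le\frac{\sqrt{-2\lambda_q}}{2}$ forces $\norm{x_i^{(t)}}^2<-\lambda_q$, and $\alpha\fnorm{E^{(t)}}R_i$ is at most a fixed small multiple of $\delta$. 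Pick $t_0\ge N$ with $\abs{x_{im}^{(t_0)}}\ge\delta$ (if already $\norm{z_2^{(t_0)}}\ge\frac{\sqrt{-2\lambda_q}}{2}$ we are done). Projecting~\eqref{eq:it-for-xi} onto $e_m$ and bounding the error by Cauchy--Schwarz shows that, while $\norm{z_2^{(t)}}\le\frac{\sqrt{-2\lambda_q}}{2}$, the multiplicative factor acting on $x_{im}^{(t)}$ exceeds $1$ and the additive error is a fixed fraction of $\delta$, so once $\abs{x_{im}^{(t)}}\ge\delta$ it stays $\ge\delta$ and then grows geometrically. Since $\abs{x_{im}^{(t)}}\le\norm{z_2^{(t)}}$ is bounded by $R_i$, this regime cannot persist, and there is $N_1\ge t_0$ with $\norm{z_2^{(N_1)}}\ge\frac{\sqrt{-2\lambda_q}}{2}$.

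Next I would show that the bound $\norm{z_2^{(t)}}\ge\frac{\sqrt{-2\lambda_q}}{4}$ is preserved for all $t\ge N_1$, by induction. Write the $z_2$-block of~\eqref{eq:it-for-xi} as $z_2^{(t+1)}=M^{(t)}z_2^{(t)}-\alpha F^{(t)}x_i^{(t)}$, where $M^{(t)}=\mathrm{diag}(1-\alpha\lambda_k-\alpha\norm{x_i^{(t)}}^2)_{k\in[i,q]}$ is positive diagonal and $F^{(t)}$ gathers the corresponding rows of $E^{(t)}$, so $\norm{z_2^{(t+1)}}\ge\sigma_{\min}(M^{(t)})\norm{z_2^{(t)}}-\alpha\fnorm{E^{(t)}}R_i$. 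When $\norm{z_2^{(t)}}\ge\frac{\sqrt{-2\lambda_q}}{2}$, even the crude bound $\sigma_{\min}(M^{(t)})\ge\frac{9}{10}$, together with the smallness of $\fnorm{E^{(t)}}$, gives $\norm{z_2^{(t+1)}}>\frac{\sqrt{-2\lambda_q}}{4}$. When $\frac{\sqrt{-2\lambda_q}}{4}\le\norm{z_2^{(t)}}\le\frac{\sqrt{-2\lambda_q}}{2}$, we have $\norm{x_i^{(t)}}^2<-\lambda_q$, so $\sigma_{\min}(M^{(t)})>1$ with a gap proportional to $\alpha\abs{\lambda_q}$, and the multiplicative gain dominates the vanishing error, giving $\norm{z_2^{(t+1)}}\ge\norm{z_2^{(t)}}$. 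Either way the set $\{\,\norm{z_2}\ge\frac{\sqrt{-2\lambda_q}}{4}\,\}$ is forward invariant past $N_1$; since $\norm{x_i^{(t)}}\ge\norm{z_2^{(t)}}$, the lemma follows with $N=N_1$.

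The step I expect to be the main obstacle is the first one, precisely because of the perturbation. In Lemma~\ref{app:lemma3} the decisive coordinate $x_{11}^{(t)}$ is merely rescaled by a positive factor and so is manifestly never zero, whereas here $x_{im}^{(t)}$ can cross zero, so the geometric-growth argument must be launched from a time at which $\abs{x_{im}^{(t)}}$ is already bounded away from zero, with the additive error pinned a fixed fraction below that bound. Arranging a single $N$ so that the tails of $\norm{z_1^{(t)}}$, $\norm{z_3^{(t)}}$, and $\fnorm{E^{(t)}}$ simultaneously clear all the explicit thresholds used in both steps is the bookkeeping that needs care, but is otherwise routine.
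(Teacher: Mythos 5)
Your proposal is correct and follows essentially the same route as the paper's proof: the same coordinate split of $x_i^{(t)}$ into the blocks $[1,i)$, $[i,q]$, $(q,n]$, the same use of Lemma~\ref{app:lemma5} and Lemma~\ref{lem:bounded-domain} to control the tails and the perturbation $E^{(t)}$, and the same two-phase argument in which the factor $1-\alpha\lambda_k-\alpha\norm{x_i^{(t)}}^2>1$ forces the middle block up to the threshold $\tfrac{\sqrt{-2\lambda_q}}{2}$ and then keeps $\norm{x_i^{(t)}}$ above $\tfrac{\sqrt{-2\lambda_q}}{4}$. The only differences are cosmetic: you drive the growth phase with a single non-vanishing coordinate (via a $\limsup$) where the paper uses a recurring lower bound $\veps_0$ on $\norm{y_2^{(t)}}$, and you phrase the maintenance phase as forward invariance of $\{\norm{z_2}\geq\tfrac{\sqrt{-2\lambda_q}}{4}\}$ rather than the paper's case split on $\norm{x_i^{(t)}}$.
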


\begin{proof}
Without loss of generality, we assume $A$ is diagonal. Notations remain
the same as that in the proof of Lemma~\ref{app:lemma5} if not redefined.
We split the vector $x_i^{(t)}$ into three parts: $y_1^{(t)} =
\left(x_{i1}^{(t)}, \dots, x_{i(i-1)}^{(t)} \right)^\top$, $y_2^{(t)} =
\left(x_{ii}^{(t)}, \dots, x_{iq}^{(t)} \right)^\top$, and $y_3^{(t)} =
\left(x_{i(q+1)}^{(t)}, \dots, x_{in}^{(t)} \right)^\top$.

From the assumption there exists an integer $k \in [i,q]$ such that
$u_k^\top x_{i}^{(t)}$ does not converge to zero.  Hence, there exists a
positive $\veps_0 < \frac{\sqrt{-2\lambda_q}}{8}$, such that for any $N$
there exists a $t > N$ and $\norm{y_2^{(t)}} > \veps_0$ holds. Further, we
have the convergence of $E^{(t)}$ and Lemma~\ref{app:lemma5} guarantees
the convergence of $y_1^{(t)}$ and $y_3^{(t)}$.  Thus for such $\veps_0$,
there exists an $N_1$ such that $\norm{E^{(t)}} \leq
\frac{\sqrt{-2\lambda_q} \veps_0}{4}$, $\norm{y_1^{(t)}} <
\frac{\sqrt{-2\lambda_q}}{8}$, and $\norm{y_3^{(t)}} <
\frac{\sqrt{-2\lambda_q}}{8}$ hold for all $t \geq N_1$, and
$\norm{y_2^{(N_1)}} > \veps_0$. The $j$-th entry of $x_i^{(t)}$ for $j \in
[i,q]$ satisfies,
\begin{equation}
    x_{ij}^{(t+1)} = \left(1 - \alpha \lambda_j - \alpha
    \norm{x_i^{(t)}}^2 \right) x_{ij}^{(t)} - \alpha e_j^\top
    E^{(t)} x_i^{(t)}.
\end{equation}
If $\norm{x_i^{(t)}} \leq \frac{\sqrt{-2\lambda_q}}{2}$ for $t =
N_1$, then we can bound the norm of $y_2^{(t)}$ as,
\begin{align}
    \norm{y_2^{(t+1)}} & \geq \left(1-\alpha
    \lambda_q - \alpha \norm{x_i^{(t)}}^2 \right)
    \norm{y_2^{(t)}} -\alpha \norm{E^{(t)}} \norm{x_i^{(t)}} \notag\\
    & \geq \left(1 - \frac{\alpha \lambda_q}{2} - \alpha
    \frac{\norm{E^{(t)}} \norm{x_i^{(t)}}}{\norm{y_2^{(t)}}}\right)
    \norm{y_2^{(t)}}
    \geq \left(1-\frac{\alpha \lambda_q}{4} \right)
    \norm{y_2^{(t)}}.
\end{align}
The increasing factor is strictly greater than one. Hence
$\norm{y_2^{(t+1)}} > \veps_0$ holds for $t+1$ as well.  And
$\norm{y_2^{(t)}}$ increases monotonically until $\norm{x_i^{(t)}} >
\frac{\sqrt{-2\lambda_q}}{2}$.

When $\norm{x_i^{(t)}} > \frac{\sqrt{-2\lambda_q}}{2}$, the norm of the
following iteration is lower bounded as,
\begin{align}
        \norm{x_i^{(t+1)}} & \geq
        \left(1-\alpha \lambda_n-\alpha R_i^2\right)
        \norm{x_i^{(t)}} - \alpha
        \norm{E^{(t)}} R_i \notag\\
        & \geq \left(1-\alpha \lambda_n -\alpha
        R_i^2\right) \frac{\sqrt{-2\lambda_q}}{2} - \alpha
        \frac{\sqrt{-2\lambda_q} \veps_0}{4}
        \geq \frac{\sqrt{-2\lambda_q}}{4},
\end{align}
where the last inequality is due to the assumption on $\alpha$. Further,
the norm of $y_2^{(t+1)}$ can be lower bounded as,
\begin{equation}
    \norm{y_2^{(t+1)}} \geq 
    \sqrt{\norm{x_k^{(t+1)}}^2 - \norm{y_1^{(t+1)}}^2 -
    \norm{y_3^{(t+1)}}^2}
    \geq \sqrt{\frac{-2\lambda_q}{16} + 2\frac{2\lambda_q}{64}}
    > \veps_0.
\end{equation}

Therefore, the norm of $x_i^{(t)}$ is lower bounded by
$\frac{\sqrt{-2\lambda_q}}{4}$ after the first iteration later than $N_1$
such that $\norm{x_i^{(t)}} > \sqrt{\frac{-2\lambda_q}{2}}$.
\end{proof}

Lemma~\ref{app:lemma7} and Lemma~\ref{app:lemma8} serve as the multicolumn
version of Lemma~\ref{app:lemma2}.  More precisely, under the assumption
that $x_i^{(t)}$ does not converge to zero, Lemma~\ref{app:lemma7} and
Lemma~\ref{app:lemma8} prove that there exists a tangent of
$\theta_i^{(t)} = \angle (x_i^{(t)},u_i)$ or $\theta_j^{(t)}=\angle
(x_i^{(t)},u_j)$ for $j \in (i,q]$ converging linearly to zero, where as
before $\theta_j^{(t)}$ denotes the acute angle between $x_i^{(t)}$ and
$\pm u_j$ for $j \in [i, q]$.

\begin{lemma}\label{app:lemma7}
    Assume Assumption~\ref{assump:init} is satisfied and $\lim_{t
    \rightarrow \infty} \fnorm{ X_{i-1}^{(t)} - \calX^*_{i-1} } = 0$. If
    $u_i^\top x_{i}^{(t)}$ does not converge to zero, then the tangent of
    $\theta_i^{(i)} = \angle (x_i^{(t)},u_i)$ converges to 0.
\end{lemma}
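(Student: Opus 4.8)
The plan is to mimic the proof of Lemma~\ref{app:lemma2}, tracking the tangent $\tan\theta_i^{(t)} = \frac{\sqrt{\sum_{k\neq i} (x_{ik}^{(t)})^2}}{|x_{ii}^{(t)}|}$ but now under the perturbation $E^{(t)}$ coming from the already-converged columns. First I would invoke Lemma~\ref{app:lemma6}: since $u_i^\top x_i^{(t)}$ does not converge to zero, there is a subsequence along which $\norm{y_2^{(t)}}>\veps_0$, and hence by Lemma~\ref{app:lemma6} the norm $\norm{x_i^{(t)}}$ is bounded below by $\frac{\sqrt{-2\lambda_q}}{4}$ for all $t$ past some $N$. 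This lower bound on $\norm{x_i^{(t)}}$ is the key that makes $|x_{ii}^{(t)}|$ controllable in the denominator. Combined with Lemma~\ref{app:lemma5}, the off-diagonal mass outside $[i,q]$ (the $y_1$ and $y_3$ parts) tends to zero, so the only dangerous off-diagonal components in the numerator are $x_{ij}^{(t)}$ for $j\in(i,q]$. One also needs a lower bound on $|x_{ii}^{(t)}|$ itself: since $x_{ii}^{(0)}=u_i^\top x_i^{(0)}\neq 0$ and the per-iteration multiplier $1-\alpha\lambda_i-\alpha\norm{x_i^{(t)}}^2$ stays positive (and in fact $>1$ when $\norm{x_i}$ is not too large, by the choice of $\alpha$ and $\lambda_i<0$), the sign of $x_{ii}^{(t)}$ is preserved and it stays away from zero; the perturbation term $\alpha (E_i^{(t)})^\top x_i^{(t)}$ is eventually negligible against it.

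Next I would carry out the tangent recursion. Writing the $i$-th column iteration~\eqref{eq:it-for-xi} componentwise, the numerator entries for $j\in(i,q]$ satisfy $x_{ij}^{(t+1)} = (1-\alpha\lambda_j-\alpha\norm{x_i^{(t)}}^2)x_{ij}^{(t)} - \alpha e_j^\top E^{(t)} x_i^{(t)}$ and the denominator entry satisfies $x_{ii}^{(t+1)} = (1-\alpha\lambda_i-\alpha\norm{x_i^{(t)}}^2)x_{ii}^{(t)} - \alpha e_i^\top E^{(t)} x_i^{(t)}$. Dividing, and using that $\lambda_i<\lambda_j$ for $j>i$ so the ratio of multipliers $\frac{1-\alpha\lambda_j-\alpha\norm{x_i^{(t)}}^2}{1-\alpha\lambda_i-\alpha\norm{x_i^{(t)}}^2}$ is a fixed contraction factor $<1$ (uniformly in $t$ once $\norm{x_i^{(t)}}$ is bounded, which it is by Lemma~\ref{lem:bounded-domain}), I would obtain an inequality of the schematic form
\begin{equation}
\tan\theta_i^{(t+1)} \;\le\; \gamma\,\tan\theta_i^{(t)} \;+\; C\,\norm{E^{(t)}},
\end{equation}
with $\gamma<1$ and $C$ a constant absorbing $\alpha$, $R_i$, and the lower bound $\frac{\sqrt{-2\lambda_q}}{4}$ on $|x_{ii}^{(t)}|$. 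The contributions from $y_1,y_3$ in the numerator should be folded into the $\norm{E^{(t)}}$-term (or handled as a separately vanishing additive term via Lemma~\ref{app:lemma5}). Since $\norm{E^{(t)}}\to 0$, the standard perturbed-linear-recursion argument — split into a geometrically decaying homogeneous part plus a convolution of $\gamma^{t-s}$ against a vanishing sequence — gives $\tan\theta_i^{(t)}\to 0$.

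The main obstacle, as in the $y_1$ single-column argument, is establishing and propagating the lower bound on $|x_{ii}^{(t)}|$ (equivalently, that $x_i^{(t)}$ does not drift toward the $u_i$-orthogonal complement while its norm stays bounded below) \emph{simultaneously} with the tangent decay: a priori, $x_{ii}^{(t)}$ being small would blow up the denominator and invalidate the constant $C$. The resolution is that the hypothesis "$u_i^\top x_i^{(t)}$ does not converge to zero" together with Lemma~\ref{app:lemma6} forces $\norm{x_i^{(t)}}\ge\frac{\sqrt{-2\lambda_q}}{4}$ eventually, and then Lemma~\ref{app:lemma5} forces the non-$[i,q]$ mass to zero, so for large $t$ essentially all of the norm lives in the $[i,q]$ block; a short separate argument (using positivity and growth of the $x_{ii}$-multiplier plus eventual negligibility of the $E$-perturbation) then pins $|x_{ii}^{(t)}|$ away from zero. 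Once that bootstrap is in place the recursion above closes and the lemma follows; Lemma~\ref{app:lemma8} will presumably handle the complementary case where it is some $\theta_j$, $j\in(i,q]$, rather than $\theta_i$, whose tangent one controls.
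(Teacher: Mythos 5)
Your overall strategy matches the paper's: write the componentwise recursion for $x_i^{(t)}$, bound the numerator and denominator of $\tan\theta_i$ separately, and obtain a one-step inequality of the form $\tan\theta_i^{(t+1)}\le\gamma\,\tan\theta_i^{(t)}+C\veps$ with $\gamma<1$, using Lemma~\ref{app:lemma6} for the lower bound on $\norm{x_i^{(t)}}$ and the vanishing of $\norm{E^{(t)}}$. However, there is a genuine gap exactly at the point you flag as "the main obstacle": the control of the denominator $\abs{x_{ii}^{(t)}}$. The hypothesis is only that $u_i^\top x_i^{(t)}$ does \emph{not converge} to zero, i.e.\ $\abs{x_{ii}^{(t)}}>\delta$ along a subsequence; it gives no eventual lower bound valid for all large $t$, and your proposed resolution does not produce one. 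Specifically: (a) appealing to $x_{ii}^{(0)}\neq 0$ and sign preservation is not available — that is not the hypothesis, and in the early iterations the perturbation $-\alpha e_i^\top E^{(t)}x_i^{(t)}$ is not yet small, so the sign and size of $x_{ii}$ are not protected; (b) the multiplier $1-\alpha\lambda_i-\alpha\norm{x_i^{(t)}}^2$ need not exceed $1$, since Lemma~\ref{lem:bounded-domain} only gives $\norm{x_i^{(t)}}\le R_i$ and $R_i^2\gg-\lambda_i$ is possible, so there is no a priori "growth" of $x_{ii}$; (c) "essentially all of the norm lives in the $[i,q]$ block" does not pin $\abs{x_{ii}}$ away from zero, because the mass can sit on coordinates $j\in(i,q]$ — that is precisely the complementary situation treated by Lemma~\ref{app:lemma8}, so it cannot be ruled out by a soft argument.

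The paper closes this gap differently, and this is the idea missing from your plan: it does not establish a separate lower bound on $\abs{x_{ii}^{(t)}}$ at all. It picks a single time $N_2$, taken from the subsequence guaranteed by the hypothesis and chosen after the time $N_1$ at which $\norm{E^{(t)}}\le\veps^2$ and $\norm{x_i^{(t)}}\ge\tfrac{\sqrt{-2\lambda_q}}{4}$, so that the ratio condition $\cos\theta_i^{(N_2)}=\abs{x_{ii}^{(N_2)}}/\norm{x_i^{(N_2)}}\ge\delta/R_i\ge 2\veps^2/(\lambda_{i+1}-\lambda_i)$ holds. This ratio condition is what lets the perturbation in the denominator be absorbed into a shift of $\lambda_i$ to $\tfrac{\lambda_i+\lambda_{i+1}}{2}$, yielding the one-step tangent inequality. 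The crucial bootstrap is then that the inequality propagates its own hypothesis: as long as $\tan\theta_i^{(t)}>C\veps/\beta$, the tangent strictly decreases, hence $\cos\theta_i$ increases, hence the ratio condition holds again at $t+1$; once $\tan\theta_i$ drops below $C\veps/\beta$ it remains below, and since $\veps$ is arbitrary the tangent converges to zero. So the denominator control is a consequence of the monotonicity produced by the recursion itself, coupled through the cosine, rather than something to be proved beforehand. Without this (or an equally careful substitute), your schematic recursion with a uniform constant $C$ is not justified.
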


\begin{proof}
Without loss of generality, we assume $A$ is diagonal. Notations remain
the same as that in the proof of Lemma~\ref{app:lemma5} if not redefined.
Based on the assumptions that $e_i^\top x_i^{(t)} = x_{ii}^{(t)}$ does not
converge to zero, there exists a positive number $\delta > 0$ such that
for any $N$, there exists a $t>N$ and $\abs{x_{ii}^{(t)}} > \delta$.  We
also know that Lemma~\ref{app:lemma6} holds and $\norm{E^{(t)}}$ converges
to zero. Hence, for any $\veps$ sufficiently small, there exists an
integer $N_1$ such that $\norm{E^{(t)}} \leq \veps^2$ and
$\norm{x_i^{(t)}} \geq \frac{\sqrt{-2\lambda_q}}{4}$ hold for all $t >
N_1$.  Since $e_i^\top x_i^{(t)} = x_{ii}^{(t)}$ does not converge to
zero, there exists an integer $N_2 > N_1$, such that,
\begin{equation} \label{eq:ratio-cond}
    \cos \theta_i^{(N_2)} =
    \frac{\abs{x_{ii}^{(N_2)}}}{\norm{x_{i}^{(N_2)}}}
    \geq \frac{\delta}{R_i} \geq
    \frac{ 2 \veps^2}{\lambda_{i+1}-\lambda_i},
\end{equation}
Recall the definition of the tangent of $\theta_i^{(t)}$,
\begin{equation} \label{eq:tan-theta}
    \tan \theta_i^{(t+1)} = \frac{\sqrt{ \sum_{j \neq i} \left(
    x_{ij}^{(t+1)} \right)^2 }}{\abs{x_{ii}^{(t+1)}}}.
\end{equation}
We derive the lower bound and the upper bound for the denominator
and numerator respectively when $t = N_2$.

Using the iterative relationship \eqref{eq:it-for-xi}, we have the
lower bound on the denominator,
\begin{align} \label{eq:denominator-lower-bound}
    \abs{x_{ii}^{(N_2+1)}} & = \abs{ \left(1-\alpha \lambda_i
    - \alpha \norm{x_i^{(N_2)}}^2 \right) x_{ii}^{(N_2)} -
    \alpha e_i^\top E^{(N_2)} x_i^{(N_2)}} \notag\\
    & \geq \left(1-\alpha \lambda_i - \alpha
    \norm{x_i^{(N_2)}}^2 - \alpha \frac{\norm{E^{(N_2)}}
    \norm{x_i^{(N_2)}} }{\abs{x_{ii}^{(N_2)}}} \right)
    \abs{x_{ii}^{(N_2)}} \notag\\
    & \geq \left(1 - \alpha \frac{\lambda_i + \lambda_{i+1}}{2}
    - \alpha \norm{x_i^{(N_2)}}^2 \right) \abs{x_{ii}^{(N_2)}},
\end{align}
where the second inequality is due to \eqref{eq:ratio-cond}.

Regarding the numerator in \eqref{eq:tan-theta}, again using the
iterative relationship \eqref{eq:it-for-xi}, we have,
\begin{align} \label{eq:numerator-upper-bound}
    \sqrt{ \sum_{j \neq i} \left( x_{ij}^{(N_2+1)} \right)^2 }
    & \leq \Bigg\{ \sum_{j \neq i} \bigg[ \left(1 - \alpha
    \lambda_{i+1} - \alpha \norm{x_i^{(N_2)}}^2\right)^2
    \left(x_{ij}^{(N_2)}\right)^2 \notag  \\
    & \quad + \alpha^2 \norm{E^{(N_2)}}^2
    \norm{x_i^{(N_2)}}^2 \notag\\
    & \quad + 2 \alpha \left(1 - \alpha \lambda_{i+1} - \alpha
    \norm{x_i^{(N_2)}}^2 \right) \abs{ x_{ij}^{(N_2)}
    } \norm{E^{(N_2)}} \norm{x_i^{(N_2)}} \bigg]
    \Bigg\}^{\frac{1}{2}} \notag\\
    & \leq \left(1 - \alpha \lambda_{i+1} - \alpha
    \norm{x_i^{(N_2)}}^2\right) \sqrt{ \sum_{j\neq i}
    \left(x_{ij}^{(N_2)}\right)^2} + 3 \sqrt{\alpha n}R_i
    \veps.
\end{align}
The first inequality adopts the fact that, without $i$-th entry,
$\lambda_{i+1}$ is the smallest eigenvalue of $\tA$; the second inequality
mainly uses the inequality of the square-root function; and the last
inequality holds for sufficiently small $\veps$.

Substituting \eqref{eq:denominator-lower-bound} and
\eqref{eq:numerator-upper-bound} into \eqref{eq:tan-theta},
we obtain,
\begin{align} \label{eq:tan-ineq}
    \tan \theta_i^{(N_2+1)} & \leq \frac{1 - \alpha
    \lambda_{i+1} - \alpha \norm{x_i^{(N_2)}}^2}{1 -
    \alpha \frac{\lambda_i+\lambda_{i+1}}{2} - \alpha
    \norm{x_i^{(N_2)}}^2} \tan \theta_i^{(N_2)} +
    \frac{3\sqrt{\alpha n}R_i^2 \veps}{\frac{1}{2}
    \norm{x_i^{(N_2)}} \delta} \notag\\
    & \leq (1 - \beta) \tan \theta_i^{(N_2)} - \beta \tan
    \theta_i^{(N_2)} + C \veps,
\end{align}
where $\beta = \frac{1}{2} \left(1 - \frac{1 - \alpha
\lambda_{i+1}}{1 - \alpha \frac{\lambda_i + \lambda_{i+1}}{2}}
\right) = \frac{\alpha (\lambda_{i+1} - \lambda_i)}{4 - 2\alpha
(\lambda_i + \lambda_{i+1})} \in (0,1)$ and $C = \frac{24\sqrt{\alpha
n}R_i^2}{\sqrt{-2\lambda_q} \delta}$.

Based on \eqref{eq:tan-ineq}, if $\tan \theta_i^{(N_2)} >
\frac{C\veps}{\beta}$, than we have $\tan \theta_i^{(N_2+1)} < (1 - \beta)
\tan \theta_i^{(N_2)}$, which implies $\cos \theta_i^{(N_2+1)} > \cos
\theta_i^{(N_2)}$ due to the fact that all angles are acute. Therefore,
\eqref{eq:ratio-cond} holds for $t = N_2+1$ and $\tan \theta_i^{(t)}$
decay monotonically until $\tan \theta_i^{(t)} \leq \frac{C
\veps}{\beta}$. When $\tan \theta_i^{(t)} \leq \frac{C \veps}{\beta}$, we
obviously have $\tan \theta_i^{(t+1)} \leq \frac{C\veps}{\beta}$. The
inequality condition \eqref{eq:tan-ineq} still holds as long as $\veps$ is
sufficiently small.  Hence there exists a $N$ such that for all $t > N$,
we have $\tan \theta_i^{(t+1)} \leq \frac{C\veps}{\beta}$, which can be
arbitrarily small.
\end{proof}

\begin{lemma}\label{app:lemma8}
    Assume Assumption~\ref{assump:init} is satisfied and $\lim_{t
    \rightarrow \infty} \fnorm{ X_{i-1}^{(t)} - \calX^*_{i-1} } = 0$. If
    $u_i^\top x_{i}^{(t)}$ converges to zero and there exists an integer
    $j \in (i, q]$ such that $u_j^\top x_i^{(t)}$ does not converge to
    zero, then there exists an integer $k \in (i, q]$ such that the
    tangent of $\theta_k^{(t)} = \angle (x_i^{(t)},u_k)$ converges to 0.
\end{lemma}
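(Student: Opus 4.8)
The plan is to prove Lemma~\ref{app:lemma8} by re-running the angle argument of Lemma~\ref{app:lemma7} with the leading coordinate $u_i$ replaced by the right later coordinate. Assume without loss of generality that $A$ is diagonal, so the $i$-th column obeys \eqref{eq:it-for-xi} with $\tA=\mathrm{diag}(0,\dots,0,\lambda_i,\lambda_{i+1},\dots,\lambda_n)$, the first $i-1$ diagonal entries vanishing. Let $k$ be the \emph{smallest} index in $(i,q]$ for which $u_k^\top x_i^{(t)}$ does not converge to zero; such a $k$ exists by hypothesis. The minimality of $k$, the standing hypothesis $u_i^\top x_i^{(t)}\to 0$, and Lemma~\ref{app:lemma5} together give $x_{ij}^{(t)}\to 0$ for every $j\notin[k,q]$; in particular every coordinate with index strictly below $k$ vanishes in the limit. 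Since $x_{ik}^{(t)}$ does not converge to zero there is a $\delta>0$ with $\abs{x_{ik}^{(t)}}>\delta$ for infinitely many $t$, and Lemma~\ref{app:lemma6}, applied with surviving index $k$, yields an $N_0$ with $\norm{x_i^{(t)}}\geq\frac{\sqrt{-2\lambda_q}}{4}$ for all $t>N_0$. Setting $\theta_k^{(t)}=\angle(x_i^{(t)},u_k)$, so that $\tan\theta_k^{(t)}=\sqrt{\sum_{j\neq k}(x_{ij}^{(t)})^2}\,/\,\abs{x_{ik}^{(t)}}$, the task is to show this ratio tends to $0$.

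Next I would set up the two-sided estimate at a well-chosen time, exactly mirroring \eqref{eq:denominator-lower-bound}--\eqref{eq:tan-ineq}. Fix a small $\veps>0$; because $\norm{E^{(t)}}\to 0$, $\norm{x_i^{(t)}}\geq\frac{\sqrt{-2\lambda_q}}{4}$ eventually, and $\sum_{j<k}(x_{ij}^{(t)})^2\to 0$, choose $N_1>N_0$ with $\norm{E^{(t)}}\leq\veps^2$, $\norm{x_i^{(t)}}\geq\frac{\sqrt{-2\lambda_q}}{4}$, and $\sqrt{\sum_{j<k}(x_{ij}^{(t)})^2}\leq\veps^2$ for all $t\geq N_1$; then, using $\norm{x_i^{(t)}}\leq R_i$ from Lemma~\ref{lem:bounded-domain}, pick $N_2>N_1$ with $\cos\theta_k^{(N_2)}=\abs{x_{ik}^{(N_2)}}/\norm{x_i^{(N_2)}}\geq\delta/R_i\geq 2\veps^2/(\lambda_{k+1}-\lambda_k)$. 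The denominator bound \eqref{eq:denominator-lower-bound} carries over verbatim with $i\mapsto k$, giving $\abs{x_{ik}^{(N_2+1)}}\geq\bigl(1-\alpha\frac{\lambda_k+\lambda_{k+1}}{2}-\alpha\norm{x_i^{(N_2)}}^2\bigr)\abs{x_{ik}^{(N_2)}}$. For the numerator I would split $\{j\neq k\}$ into $\{j>k\}$ and $\{j<k\}$: for $j>k$ one has $\tA_{jj}\geq\lambda_{k+1}$, so \eqref{eq:numerator-upper-bound} applies and bounds $\sqrt{\sum_{j>k}(x_{ij}^{(N_2+1)})^2}$ by $\bigl(1-\alpha\lambda_{k+1}-\alpha\norm{x_i^{(N_2)}}^2\bigr)\sqrt{\sum_{j\neq k}(x_{ij}^{(N_2)})^2}+3\sqrt{\alpha n}R_i\veps$, while the $j<k$ part is at most $\veps^2$ since $N_2+1\geq N_1$. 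Dividing yields the recursion \eqref{eq:tan-ineq} with $\lambda_i,\lambda_{i+1}$ replaced by $\lambda_k,\lambda_{k+1}$, that is $\tan\theta_k^{(N_2+1)}\leq(1-2\beta)\tan\theta_k^{(N_2)}+C\veps$ with $\beta=\frac{\alpha(\lambda_{k+1}-\lambda_k)}{4-2\alpha(\lambda_k+\lambda_{k+1})}\in(0,1)$ (membership checked exactly as in Lemma~\ref{app:lemma7}) and a constant $C=C(n,\alpha,R_i,\lambda_q,\delta)$. The closing argument is then word-for-word that of Lemma~\ref{app:lemma7}: as long as $\tan\theta_k^{(t)}>C\veps/\beta$ the tangent strictly decreases, hence $\cos\theta_k^{(t)}$ increases and the inequality needed to re-run the recursion stays valid, so the bound self-propagates until $\tan\theta_k^{(t)}\leq C\veps/\beta$ and remains there; letting $\veps\downarrow 0$ gives $\tan\theta_k^{(t)}\to 0$.

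I expect the one genuinely delicate point to be the handling of the ``in-between'' coordinates $j\in[i,k)$. Unlike in Lemma~\ref{app:lemma7}, where every non-leading coordinate carries a $\tA$-eigenvalue at least $\lambda_{i+1}$, the coordinates with $i\leq j<k$ carry eigenvalues $\lambda_j<\lambda_k$, so their per-step factors $1-\alpha\lambda_j-\alpha\norm{x_i^{(t)}}^2$ may exceed the leading factor $1-\alpha\lambda_k-\alpha\norm{x_i^{(t)}}^2$; bounding them the way the $j>k$ coordinates are bounded would make the tangent recursion non-contractive. The argument works only because these coordinates are \emph{already known to vanish} --- coordinate $i$ by the standing hypothesis of Lemma~\ref{app:lemma8}, coordinates $i<j<k$ by the minimal choice of $k$ --- so they can be absorbed into the $O(\veps)$ perturbation \emph{uniformly in $t$} (this is exactly the role of requiring $\sqrt{\sum_{j<k}(x_{ij}^{(t)})^2}\leq\veps^2$ for all $t\geq N_1$) rather than propagated through the recursion, where they would accumulate over the many steps between $N_2$ and the first time the tangent drops below $C\veps/\beta$. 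Once that uniformity is in place the rest is the same monotone-decrease bookkeeping already used for $\theta_1$ and $\theta_i$, and together with Lemma~\ref{app:lemma7} and Lemma~\ref{app:lemma5} this furnishes, in every case, a direction along which the angle of $x_i^{(t)}$ tends to zero --- the input needed to conclude Lemma~\ref{lem:multi-global-conv}.
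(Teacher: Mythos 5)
Your proposal is correct and follows essentially the same route as the paper's own proof in Appendix~\ref{app:lemma8-proof}: choose $k$ as the smallest non-vanishing index in $(i,q]$, use the hypothesis, the minimality of $k$, and Lemma~\ref{app:lemma5} to treat all coordinates below $k$ (and above $q$) as an $O(\veps)$ perturbation rather than propagating them through the contraction, and then rerun the tangent recursion of Lemma~\ref{app:lemma7} with $\lambda_k,\lambda_{k+1}$ in place of $\lambda_i,\lambda_{i+1}$ together with the same monotone-decrease closing argument. The only cosmetic difference is that you absorb the $j<k$ coordinates via a uniform-in-$t$ bound while the paper bounds their one-step images, and your $\beta$ uses $\lambda_k,\lambda_{k+1}$ where the paper's displayed $\beta$ (apparently a typo) still shows $\lambda_i,\lambda_{i+1}$.
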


Lemma~\ref{app:lemma8} is fairly similar to Lemma~\ref{app:lemma7}. The
proof can be found in Appendix~\ref{app:lemma8-proof}. Now all ingredients
for proving Lemma~\ref{lem:multi-global-conv} are prepared. We then prove Lemma~\ref{lem:multi-global-conv}.

\begin{proof}({\bf Proof of Lemma~\ref{lem:multi-global-conv}}) Without loss of generality, we assume $A$ is diagonal. Notations remain
the same as that in the proof of Lemma~\ref{app:lemma5} if not redefined.
Lemma~\ref{app:lemma5} implies that under the given assumptions, we have
$\lim_{t \rightarrow \infty} u_k^\top x_i^{(t)} = 0$ for all integer $k
\in [1, i) \bigcup (q, n]$.

If $u_k^\top x_i^{(t)}$ converges to zero for all $k \in [i, q]$, then
$x_i^{(t)}$ converges to zero vector, which is included in the statement
of Lemma~\ref{lem:multi-global-conv}.

Otherwise, there exists an integer $j \in [i, q]$ such that $u_j^\top
x_i^{(t)}$ does not converge to zero.  Hence the condition in
Lemma~\ref{app:lemma6} is satisfied. At the same time, either of
Lemma~\ref{app:lemma7} or Lemma~\ref{app:lemma8} holds, which means that
there exists an integer $k \in [i, q]$ such that the tangent of
$\theta_k^{(t)}$ converges to zero.

Now we focus on the convergence of $x_{ik}^{(t)}$. We denote $\eta$
as $\eta = (x_{i1}^2 + \cdots + x_{i(k-1)}^2 + x_{i(k+1)}^2 +
\cdots + x_{in}^2)^{\frac{1}{2}}$. Notice that $\sin \theta_k^{(t)} =
\frac{\eta^{(t)}}{\norm{x_i^{(t)}}}$ converges to zero due to the convergence
of the tangent and boundedness of the cosine function.  Lemma~\ref{app:lemma6}
also shows the $\norm{x_i^{(t)}}$ is lower bounded by a constant for $t$
large. Hence we conclude that $\eta^{(t)}$ converges to zero.

Due to the convergence of $\eta^{(t)}$ and Lemma~\ref{app:lemma6},
for any $\veps \leq \min \left(\frac{\sqrt{-\lambda_q}}{4},
\frac{\sqrt{\lambda_i \lambda_q}}{8R_i} \right)$, there exists an integer
$M$ such that for any $t \geq M$, we have $\eta^{(t)} \leq \veps$,
$\norm{x_i^{(t)}} \geq \frac{\sqrt{-2\lambda_q}}{4}$, and $\norm{E^{(t)}}
\leq \veps^2$. Combining these bounds, we obtain, $\left( x^{(t)}_{ik}
\right)^2 = \norm{x^{(t)}_i}^2 - \eta^2 \geq -\frac{\lambda_q}{8} -
\veps^2 \geq -\frac{\lambda_q}{16}$.

Since the stepsize $\alpha$ is small, the signs of $x_{ik}^{(t)}$ and
$x_{ik}^{(0)}$ remain the same. We first discuss the scenario
$x_{ik}^{(0)} > 0$. Let $\delta^{(t)} = x_{ik}^{(t)} - \sqrt{-\lambda_k}$.
We have,
\begin{align}
    \abs{\delta^{(t+1)}} & \leq \left( 1 - \alpha
    \left( \sqrt{-\lambda_k} + x_{ik}^{(t)}\right)
    x_{ik}^{(t)} \right) \abs{\delta^{(t)}} + \alpha
    \left(\eta^{(t)}\right)^2 x_{ik}^{(t)} + \alpha
    \norm{E^{(t)}} \norm{x_i^{(t)}} \notag\\
    & \leq \left( 1 - \alpha \frac{\sqrt{\lambda_k \lambda_q}}{4}
    \right) \abs{\delta^{(t)}} + 2 \alpha
    \veps^2 R_i \notag\\
    & \leq \left( 1 - \alpha \frac{\sqrt{\lambda_k \lambda_q}}{4}
    \right)^{t+1-M} \abs{\delta^{(M)}} + 
    \frac{\veps}{2}.
\end{align}
for $t > M$. Hence there exists an integer $N \geq M$ such that for any $t
\geq N$, $\abs{\delta^{(t)}} \leq \veps$.

If $x_{ik}^{(0)} < 0$, the iteration converges to $-\sqrt{-\lambda_k}$.
The analysis is analog to the above one.
\end{proof}

\subsubsection{Proof inspired by noisy optimization method}
\label{sec:proof2}

In this section, we prove Lemma~\ref{lem:multi-global-conv} based on an
idea inspired by the noisy optimization method. As we mentioned earlier,
\TOM{} is not a gradient descent method of an energy function. While we
could show the convergence of \TOM{} under the measurement of energy
functions. In noisy (stochastic) optimization methods, the convergence is
proved with an adequately scaled bound on the noises. In the following, we
focus on the convergence of a given column and view the perturbation from
earlier columns as the ``noise'' on the energy function.

More precisely, we first derive the energy functions used in the proof.
Lemma~\ref{lem:multi-global-conv} guarantees the first $i-1$ columns
converge to one of the global minima $X_i^* \in \calX_i^* = \{U_p
\sqrt{-\Lambda_i} D\}$. Then for any $\veps_{i-1}<\sqrt{\rho}$, there
exists a step $T$ such that
$\norm{X_{i-1}^{(t)}-X_{i-1}^{*}}<\veps_{i-1}$, which is referred as the
error of previous columns in the rest paper, for any $t \geq T$. Further,
let $E^{(t)} = X_{i-1}^{(t)} \left(X_{i-1}^{(t)} \right)^\top -
X_{i-1}^{*} \left(X_{i-1}^{*}\right)^\top$. We could have a simple bound
on $\norm{E^{(t)}}$,
\begin{align}
    \norm{E^{(t)}} & = \left\|\left(X_{i-1}^{(t)} - X_{i-1}^{*}\right)
    \left(X_{i-1}^{*}\right)^\top
    + X_{i-1}^{*}\left(X_{i-1}^{(t)} - X_{i-1}^{*}\right)^\top
    \right. \notag \\
    & \quad \left. + \left(X_{i-1}^{(t)} 
    - X_{i-1}^{*}\right)\left(X_{i-1}^{(t)} - X_{i-1}^{*}\right)^\top
    \right\| \notag\\
    & \leq 2\norm{X_{i-1}^{*}} \veps_{i-1}+\veps_{i-1}^2
    \leq 3\sqrt{\rho} \veps_{i-1}.
\end{align}
Using notation $E^{(t)}$, we could rewrite the $i$-th column of
$g\left(X^{(t)}\right)$ as,
\begin{equation}
    g_i\left(X^{(t)}\right) = \tA x_i^{(t)} + x_i^{(t)} \left( x_i^{(t)} \right)^\top
    x_i^{(t)} + E^{(t)} x_i^{(t)},
\end{equation}
where $\tA = A + X_{i-1}^{*} \left(X_{i-1}^{*}\right)^\top$. Define an
energy function associated with $\tA$ as,
\begin{equation}
    F(x) = \fnorm{\tA + x x^\top}^2,
\end{equation}
where $F$ is index $i$ dependent since $\tA$ is $i$ dependent. Obviously,
$g_i\left(X^{(t)}\right)$ is the gradient of $F\left(x_i^{(t)}\right)$
with an extra term,
\begin{equation}
    g_i\left(X^{(t)}\right) = \nabla F\left(x_i^{(t)}\right) + E^{(t)} x_i^{(t)}.
\end{equation}
The Hessian of $F(x)$ is denoted as $H$, whose norm is upper bounded as,
\begin{equation}
    \norm{H\left(x_i\right)}=\norm{\tilde{A}+2x_ix_i^\top+ \left(x_i^\top x_i\right)I}\leq 4R_i^2.
\end{equation}
Now we will pave the path to proving Lemma~\ref{lem:multi-global-conv}.
First, we show that given the stepsize is sufficiently small, $x_i$ can
get close to the stationary point of $F$ after a finite number of
iterations.

\begin{lemma}\label{lem:en_decay}
    Assume Assumption~\ref{assump:init} is satisfied. If, for any small
    $\veps_i$, the error of previous columns satisfies $\veps_{i-1} < \min
    \{\sqrt{\rho}, \frac{\veps_i}{8\sqrt{3}R_i^2}\}$ and
    $\norm{\grad{F\left(x_i^{(t)}\right)}} > \veps_i$, then
    $F\left(x_i^{(t)}\right) - F\left(x_i^{(t+1)}\right) >
    \frac{1}{2}\norm{x_i^{(t)} - x_i^{(t+1)}} \veps_i > \frac{1}{4}\alpha
    \veps_i^2$.
\end{lemma}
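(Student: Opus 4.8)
The plan is to treat $x_i^{(t+1)} = x_i^{(t)} - \alpha g_i(X^{(t)})$ as a perturbed gradient step on $F$ and run the standard descent-lemma argument, then absorb the perturbation $\alpha E^{(t)} x_i^{(t)}$ into the main decrease term. First I would write $d = x_i^{(t+1)} - x_i^{(t)} = -\alpha g_i(X^{(t)}) = -\alpha(\nabla F(x_i^{(t)}) + E^{(t)} x_i^{(t)})$ and invoke the second-order Taylor bound for $F$ along the segment from $x_i^{(t)}$ to $x_i^{(t+1)}$: since $\norm{H} \le 4R_i^2$ on the domain (which is stable by Lemma~\ref{lem:bounded-domain}, so the whole segment stays in the domain), one gets
\begin{equation}
F(x_i^{(t+1)}) \le F(x_i^{(t)}) + \nabla F(x_i^{(t)})^\top d + 2R_i^2 \norm{d}^2.
\end{equation}
Then I substitute $\nabla F(x_i^{(t)}) = -\alpha^{-1} d - E^{(t)} x_i^{(t)}$ to rewrite $\nabla F(x_i^{(t)})^\top d = -\alpha^{-1}\norm{d}^2 - (E^{(t)} x_i^{(t)})^\top d$, giving
\begin{equation}
F(x_i^{(t)}) - F(x_i^{(t+1)}) \ge \left(\frac{1}{\alpha} - 2R_i^2\right)\norm{d}^2 + (E^{(t)} x_i^{(t)})^\top d.
\end{equation}

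The next step is to control the cross term. By Cauchy--Schwarz, $\lvert (E^{(t)} x_i^{(t)})^\top d\rvert \le \norm{E^{(t)}}\,\norm{x_i^{(t)}}\,\norm{d} \le 3\sqrt{\rho}\,\veps_{i-1} R_i \norm{d}$ using the bound on $\norm{E^{(t)}}$ derived just above and $\norm{x_i^{(t)}} \le R_i$ from Lemma~\ref{lem:bounded-domain}. The hypothesis $\veps_{i-1} < \veps_i/(8\sqrt{3}R_i^2)$ is exactly calibrated so that $3\sqrt{\rho}\,\veps_{i-1} R_i \le \veps_i/4$ (here one uses $\sqrt{\rho} \le R_i$, which follows from $R_i = 2^{i-1}\sqrt{3\rho} \ge \sqrt{3\rho} > \sqrt{\rho}$); hence the cross term is bounded in absolute value by $\tfrac14\veps_i\norm{d}$. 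Combined with $\alpha \le 1/(10R_p^2) \le 1/(10R_i^2)$, which forces $1/\alpha - 2R_i^2 \ge 8R_i^2 \ge 0$, I obtain $F(x_i^{(t)}) - F(x_i^{(t+1)}) \ge \left(\tfrac{1}{\alpha} - 2R_i^2\right)\norm{d}^2 - \tfrac14\veps_i\norm{d}$.

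It remains to translate this into the claimed form. Under $\norm{\nabla F(x_i^{(t)})} > \veps_i$ and the perturbation bound, $\norm{d} = \alpha\norm{\nabla F(x_i^{(t)}) + E^{(t)} x_i^{(t)}} \ge \alpha(\veps_i - \tfrac14\veps_i) = \tfrac34\alpha\veps_i$; a cleaner route is to note $\norm{d}/\alpha = \norm{g_i} \ge \norm{\nabla F} - \norm{E^{(t)} x_i^{(t)}} \ge \veps_i - \tfrac{\veps_i}{4} \ge \tfrac12\veps_i$, so $\norm{d} \ge \tfrac12\alpha\veps_i$ and also $\tfrac14\veps_i\norm{d} \le \tfrac12\left(\tfrac1\alpha - 2R_i^2\right)\norm{d}^2$ provided $\tfrac14\veps_i \le \tfrac12(\tfrac1\alpha-2R_i^2)\norm{d}$, i.e. $\norm{d}\cdot(\tfrac1\alpha-2R_i^2) \ge \tfrac12\veps_i$, which holds since $(\tfrac1\alpha-2R_i^2)\norm{d} \ge 8R_i^2\cdot\tfrac12\alpha\veps_i$ — the bookkeeping here is where I expect the only real friction, matching constants so that $F(x_i^{(t)})-F(x_i^{(t+1)}) \ge \tfrac12\left(\tfrac1\alpha-2R_i^2\right)\norm{d}^2 \ge \tfrac12\norm{d}\cdot\tfrac12\veps_i\cdot\frac{\norm{d}}{\norm{d}}$. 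Concretely I would aim for the chain $F(x_i^{(t)}) - F(x_i^{(t+1)}) \ge \tfrac12\norm{d}\,\veps_i$ by absorbing the $-\tfrac14\veps_i\norm{d}$ into half of the quadratic term and bounding the remaining $(\tfrac1\alpha - 2R_i^2 - \tfrac{\veps_i}{4\norm{d}})\norm{d}^2$ from below using $\norm{d} \ge \tfrac12\alpha\veps_i$, and then $\tfrac12\norm{d}\veps_i \ge \tfrac12\cdot\tfrac12\alpha\veps_i\cdot\veps_i = \tfrac14\alpha\veps_i^2$ gives the final inequality. The main obstacle is purely the constant-chasing to make "$\tfrac12$" work exactly rather than some smaller fraction; the conceptual content is just the perturbed descent lemma plus the two uniform bounds $\norm{H}\le 4R_i^2$ and $\norm{E^{(t)}}\le 3\sqrt\rho\,\veps_{i-1}$.
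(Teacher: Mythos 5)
Your proposal is correct and takes essentially the same route as the paper's own proof: a second-order Taylor (perturbed descent-lemma) expansion with the Hessian bound $\norm{H}\leq 4R_i^2$, the perturbation $E^{(t)}x_i^{(t)}$ controlled via $\norm{E^{(t)}}\leq 3\sqrt{\rho}\,\veps_{i-1}\leq \sqrt{3}R_i^2\veps_{i-1}$, and the hypotheses on $\veps_{i-1}$ and $\alpha$ used to absorb it into the main decrease term. The constant-chasing you flag does close with the slightly sharper bounds available from the same hypotheses, namely $\sqrt{\rho}\leq R_i/\sqrt{3}$ (so the cross term is at most $\frac{1}{8}\veps_i\norm{d}$) and $\norm{g_i}\geq \frac{7}{8}\veps_i$, which is exactly the slack the paper exploits to reach the factor $\frac{1}{2}$.
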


\begin{proof}
In this proof, we omit the iteration index $t$, denote $x_i^{(t)}$ as
$x_i$, and denote $x_i^{(t+1)}$ as $\tx_i$. The Taylor expansion of $F(x)$
up to the second order admits,
\begin{equation}
    F\left(x_i\right) - F\left(x_i - \alpha g_i\right) =
    \alpha\grad{F\left(x_i\right)}^\top g_i
    - \frac{\alpha^2}{2} g_i^\top H(\xi) g_i,
\end{equation}
where $H$ is the hessian matrix of $F$ and $\xi$ is a point between $x_i$
and $\tx_i$. Recall the boundness of $\norm{E}$. The first order term can
be bounded as,
\begin{equation}
    \grad{F}^\top g_i \geq \norm{g_i}^2 - \norm{g_i}\norm{Ex_i}
    \geq \frac{1}{4} \norm{g_i}^2 + \frac{3}{4} \norm{g_i}\veps_i
    - \sqrt{3} R_i^2 \norm{g_i} \veps_{i-1},
\end{equation}
where $\grad{F} = \grad{F}(x_i)$ and the last inequality adopts the
assumption $\rho = \norm{A} <\frac{R_i^2}{3}$. Similarly, the second order
term can be bounded as
\begin{equation}
    g_i^\top H(\xi) g_i \leq 4R_i^2 \norm{g_i}^2.
\end{equation}
Combining two bounds together, we have,
\begin{align}
        F(x_i) - F(\tx_i) & >
        \frac{3\alpha}{4} \norm{g_i} \veps_i - \sqrt{3}\alpha R_i^2
        \norm{g_i}\veps_{i-1} +\frac{\alpha}{4} \norm{g_i}^2
        - 2 \alpha^2 R_i^2 \norm{g_i}^2 \notag \\
        & > \frac{1}{2} \norm{x_i - \tx_i} \veps_i
        > \frac{1}{4}\alpha\veps_i^2,
\end{align}
where all inequalities adopts the assumption on $\veps_{i-1}$ and $\alpha$.
\end{proof}

The function $F$ is bounded from below and Lemma~\ref{lem:en_decay} shows
that for any fixed $\veps_i$, the decreases of function values are also
bounded from below. Hence, after a finite number of iterations,
$x_i^{(t)}$ will always get close enough to one of the stationary points,
\ie, $\norm{F\left(x_i^{(t)}\right)}<\veps_i$. In the next lemma, we show
that $\norm{F\left(x_i^{(t)}\right)} < \veps_i$ defines a small neighbor
for each stationary point of $\grad{F(x)}$. Let $\calS$ denote the set of
all stationary points of $\grad{F(x)}$, \ie, $\grad{F(x)=0}$ for all $x
\in \calS$.

\begin{lemma}\label{lem:saddle_neighbor}
    Assume $\veps_i < \left( \min \left\{ \frac{\min_{i\leq k < q}
    \lambda_{k+1}-\lambda_k}{2}, \frac{-\lambda_q}{n}
    \right\}\right)^{\frac{3}{2}}$. If $x_i$ satisfies
    $\norm{\grad{F(x_i)}}<\veps_i$, then there exists $x_i^\prime \in
    \calS$ such that $\norm{x_i-x_i^\prime} \leq
    \sqrt{n}\veps_i^{\frac{1}{3}}$.
\end{lemma}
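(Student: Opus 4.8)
The plan is to work in the basis where $\tA$ is diagonal (recall $\tA = A + X_{i-1}^{*}(X_{i-1}^{*})^\top = \Lambda - \sum_{k=1}^{i-1}\lambda_k u_k u_k^\top$, which is diagonal with entries $\lambda_i,\dots,\lambda_q,\dots,\lambda_n$ after the orthogonal change of coordinates; more precisely the first $i-1$ diagonal entries become $0$ and the remaining ones stay $\lambda_i, \dots, \lambda_n$). Write $x = x_i$ and let $x_k$ denote its components. The gradient is $\grad{F(x)} = 2\bigl(\tA x + (x^\top x)x\bigr) = 2\bigl((\lambda_k + \|x\|^2)x_k\bigr)_k$ (again absorbing constants as in the rest of the paper). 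The stationary points in $\calS$ are exactly the points where, for each $k$, either $x_k = 0$ or $\lambda_k + \|x\|^2 = 0$; concretely these are $0$ and $\pm\sqrt{-\lambda_k}u_k$ for $k$ with $\lambda_k < 0$. So the strategy is: from $\|\grad F(x)\| < \veps_i$ deduce that $x$ is componentwise close to one of these explicit stationary points, and bound the distance.

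The key steps, in order. First, $\|\grad F(x)\|^2 = 4\sum_k (\lambda_k + \|x\|^2)^2 x_k^2 < \veps_i^2$, so in particular $|(\lambda_k + \|x\|^2)x_k| < \veps_i/2$ for every $k$. Second, split indices into two groups according to the sign of $\lambda_k + \|x\|^2$ relative to a threshold. If $|\lambda_k + \|x\|^2|$ is not small (bounded below by, say, $\veps_i^{1/3}$), then $|x_k| < \veps_i^{2/3}/2$, i.e. $x_k$ is small. If $|\lambda_k + \|x\|^2| < \veps_i^{1/3}$, then $\|x\|^2$ is pinned near $-\lambda_k$; by the spectral-gap hypothesis on $\veps_i$ this can happen for at most one value of $\lambda_k$ (distinct negative eigenvalues are separated by at least $\min_k(\lambda_{k+1}-\lambda_k)$, and the threshold $\veps_i^{1/3}$ is smaller than half that gap), so there is a well-defined candidate eigenvalue $\lambda_\ell$ (or else $\|x\|^2$ is itself small, the threshold $\veps_i^{1/3} < -\lambda_q/n$ forcing $\|x\|$ near $0$, giving the stationary point $0$). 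Third, define $x'$: set $x'_k = 0$ for all $k \neq \ell$, and $x'_\ell = \operatorname{sign}(x_\ell)\sqrt{-\lambda_\ell}$ (or $x' = 0$ in the degenerate case). Fourth, estimate $\|x - x'\|^2 = \sum_{k\neq\ell} x_k^2 + (x_\ell - x'_\ell)^2$. The off-$\ell$ terms are each $< \veps_i^{4/3}/4$ and there are at most $n-1$ of them, contributing $< (n-1)\veps_i^{4/3}/4$. For the $\ell$-term, from $|\lambda_\ell + \|x\|^2| < \veps_i^{1/3}$ one gets $\bigl|\|x\|^2 - (x'_\ell)^2\bigr| < \veps_i^{1/3}$, and since $\|x\|^2 = x_\ell^2 + \sum_{k\neq\ell}x_k^2$, this gives $\bigl|x_\ell^2 - (x'_\ell)^2\bigr| < \veps_i^{1/3} + (n-1)\veps_i^{4/3}/4$; then $|x_\ell - x'_\ell| = \bigl|x_\ell^2 - (x'_\ell)^2\bigr|/|x_\ell + x'_\ell|$, and since $x_\ell$ and $x'_\ell$ have the same sign with $|x'_\ell| = \sqrt{-\lambda_\ell} \geq \sqrt{-\lambda_q}$ (hypothesis $\veps_i^{1/3} < -\lambda_q/n$ keeps the denominator bounded away from $0$), this is $O(\veps_i^{1/3})$. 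Summing, $\|x-x'\|^2 \leq n\veps_i^{2/3}$ after absorbing the lower-order $\veps_i^{4/3}$ terms using $\veps_i$ small, hence $\|x - x'\| \leq \sqrt{n}\,\veps_i^{1/3}$.

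The main obstacle is the bookkeeping in the third group of indices and making the threshold choices consistent: one must pick the intermediate threshold (here $\veps_i^{1/3}$) so that it simultaneously (a) is below half the minimal eigenvalue gap, so at most one $\lambda_\ell$ can be "active," (b) is below $-\lambda_q/n$ roughly, so that either $\|x\|$ is genuinely near some $\sqrt{-\lambda_\ell}$ or genuinely near $0$ with no ambiguity, and (c) yields the final exponent $\veps_i^{1/3}$ after the $x_\ell^2 \to x_\ell$ square-root step, which is what forces the cube-root scaling in the hypothesis $\veps_i < (\cdots)^{3/2}$. Verifying that the degenerate case ($x$ near $0$) and the generic case ($x$ near $\pm\sqrt{-\lambda_\ell}u_\ell$) are genuinely exhaustive under these threshold choices — i.e. that $\|x\|^2$ cannot float in an intermediate no-man's-land — is the one place requiring care; everything else is routine triangle-inequality estimation.
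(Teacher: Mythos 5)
Your route is the same as the paper's: diagonalize $\tA$, read $\grad F(x)$ componentwise as $(\lambda_k+\norm{x}^2)x_k$ (coefficient $\norm{x}^2$ on the deflated first $i-1$ entries), run a per-entry dichotomy, use the gap hypothesis to ensure at most one eigenvalue can be ``active'', and reconstruct the nearby point of $\calS$ by triangle inequality. The one place your sketch does not close as written is exactly the spot you flag: the intermediate threshold. You cut at $\abs{\lambda_k+\norm{x}^2}\lessgtr\veps_i^{1/3}$, but the hypothesis $\veps_i<(\min\{\cdot\})^{3/2}$ only controls $\veps_i^{2/3}$, and in the relevant regime $\veps_i<1$ one has $\veps_i^{1/3}>\veps_i^{2/3}$, so the needed inequalities go the wrong way. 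Concretely: (i) your claim that $\veps_i^{1/3}$ is below half the minimal gap would require $\veps_i<(\mathrm{gap}/2)^3$, which the hypothesis does not give; (ii) in the degenerate branch you only obtain $\norm{x}^2<\veps_i^{1/3}$, i.e.\ $\norm{x}<\veps_i^{1/6}$, which exceeds the required radius $\sqrt{n}\,\veps_i^{1/3}$ as soon as $\veps_i<n^{-3}$ --- and the lemma is invoked with $\veps_i\to 0$; (iii) with the pinning $\abs{\norm{x}^2+\lambda_\ell}<\veps_i^{1/3}$, your square-difference step gives $\abs{x_\ell-x_\ell'}\lesssim\veps_i^{1/3}/\sqrt{-\lambda_\ell}$, and since the hypothesis only guarantees $-\lambda_q>n\veps_i^{2/3}$, this is merely $O(1/\sqrt{n})$, not $O(\veps_i^{1/3})$ with the explicit constant the conclusion requires.

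The repair is a one-line change that lands you on the paper's proof: place the cut at $\veps_i^{2/3}$. Then each ``small'' entry is bounded by $\veps_i/\veps_i^{2/3}=\veps_i^{1/3}$ (contributing $(n-1)\veps_i^{2/3}$ to $\norm{x-x'}^2$); the hypothesis directly gives $\veps_i^{2/3}$ below half the gap and below $-\lambda_q/n$, so at most one $j_0\in[i,q]$ can be active, and an active index with nonnegative (or deflated) eigenvalue forces $\norm{x}<\veps_i^{1/3}$, i.e.\ the near-zero case closes; finally the pinning $\abs{\norm{x}^2+\lambda_{j_0}}<\veps_i^{2/3}$ together with your identity $\abs{x_{j_0}-x'_{j_0}}=\abs{x_{j_0}^2-(x'_{j_0})^2}/\abs{x_{j_0}+x'_{j_0}}$ (the paper runs the same estimate via $\sqrt{1-x}$ inequalities) gives a deviation of order $n\veps_i^{2/3}/\sqrt{-\lambda_{j_0}}$, which the assumption $\veps_i^{2/3}<-\lambda_q/n$ is calibrated to absorb into $\veps_i^{1/3}$. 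With that adjustment your argument is essentially the paper's argument, including the role of each half of the hypothesis.
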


\begin{proof}
Without loss of generality, we assume $A$ is diagonal.
Then, we have $\calS = \{0,\pm
\sqrt{-\lambda_j}e_j, i \leq j \leq q\}$ and,
\begin{equation}
    \grad{F(x_i)} = \tA x_i + x_i x_i^\top x_i =
    \begin{bmatrix}
        r^2 x_{i1} & \cdots & r^2 x_{i(i-1)}
        & (r^2+\lambda_i)x_{ii} & \cdots
        & (r^2+\lambda_n)x_{in}
    \end{bmatrix}^\top,
\end{equation}
where $r=\norm{x_i}$. Now we consider a $x_i$ such that
$\norm{\grad{F(x_i)}} < \veps_i$. If $r^2 < \veps_i^{\frac{2}{3}}$, then
the lemma is proved since $x_i$ is close to $0 \in \calS$. Hence in the
rest of the proof, we assume $r^2 \geq \veps_i^{\frac{2}{3}}$.

We know the absolute values of entries in $\grad{F(x_i)}$ are less than
$\veps_i$. For the first $i-1$ entries, the inequalities $\abs{r^2 x_{ij}}
< \veps_i$ implies that $\abs{x_{ij}} < \veps_i^{\frac{1}{3}}$ for all $1
\leq j < i$. For other entries, the inequalities
$\abs{\left(r^2+\lambda_j\right) x_{ij}} < \veps_i$ implies either 
\begin{equation} \label{eq:case2}
    \abs{x_{ij}} < \veps_i^{\frac{1}{3}}
\end{equation}
or
\begin{equation} \label{eq:case1}
    \abs{r^2+\lambda_j} < \veps_i^{\frac{2}{3}}.
\end{equation}
If \eqref{eq:case2} holds for all $j \geq i$, we immediately have
$\norm{x_i} < \sqrt{n}\veps_i^{\frac{1}{3}}$.

Now, we consider the case \eqref{eq:case1} holds for some $j \geq i$. If
\eqref{eq:case1} holds for a $j > q$, then we have $r^2 <
\abs{r^2+\lambda_j} < \veps_i^{\frac{2}{3}}$ and the lemma is proved.
Further, the assumption on $\veps_i$ guarantees that there is at most a
single $j_0 \in [i, q]$ such that \eqref{eq:case1} holds. Under such
circumstance, we have,
\begin{equation}
    \abs{\sum_{j\neq j_0} x_{ij}^2 + x_{ij_0}^2 + \lambda_{j_0}} <
    \veps_i^{\frac{2}{3}}
    \quad \text{and} \quad
    \sum_{j\neq j_0}x_{ij}^2<(n-1)\veps_i^{\frac{2}{3}}.
\end{equation}
Simplifying these inequalities leads to,
\begin{equation}
    x_{ij_0} \in \left( -\sqrt{-\lambda_{j_0} + \veps_i^{\frac{2}{3}}}, 
    -\sqrt{-\lambda_{j_0} - n\veps_i^{\frac{2}{3}}} \right)
    \bigcup \left( \sqrt{-\lambda_{j_0} - n\veps_i^{\frac{2}{3}}},
    \sqrt{-\lambda_{j_0} + \veps_i^{\frac{2}{3}}} \right),
\end{equation}
where the assumption $\veps_i < (\frac{-\lambda_{j_0}}{n})^{\frac{3}{2}}$
guarantees the positivity of quantities under the radical sign. Using
inequalities $\sqrt{1-x} < 1-\frac{1}{2}x$ and $\sqrt{1-x} > 1-x$ for $x
\in (0,1)$, we further simplify the bounds on $x_{ij_0}$,
\begin{align}
    &\sqrt{-\lambda_{j_0}} - \frac{n}{\sqrt{-\lambda_{j_0}}}
    \veps_i^{\frac{2}{3}} < x_{ij_0}
    < \sqrt{-\lambda_{j_0}} + \frac{1}{2\sqrt{-\lambda_{j_0}}}
    \veps_i^{\frac{2}{3}}, \notag\\
    & \text{or} \notag\\
    &-\sqrt{-\lambda_{j_0}} - \frac{1}{2\sqrt{-\lambda_{j_0}}}
    \veps_i^{\frac{2}{3}} < x_{ij_0} < -\sqrt{-\lambda_{j_0}}
    - \frac{n}{\sqrt{-\lambda_{j_0}}} \veps_i^{\frac{2}{3}},
\end{align}
where the assumption on $\veps_i$ is used again. Finally,
$\veps^{\frac{2}{3}}$ is bounded by $\veps^{\frac{1}{3}}$ and the lemma is
proved.
\end{proof}

From Lemma~\ref{lem:en_decay} and Lemma~\ref{lem:saddle_neighbor}, we know
that after a finite number of iterations, $x_i^{(t)}$ will converge into a
neighborhood of a stationary point $s \in \calS$ with radius $\sqrt{n}
\veps_i^{\frac{1}{3}}$,
\begin{equation}
    \calN_{s,\veps_i} = \left\{ x \middle| \norm{\grad{F(x)}} < \veps_i
    \text{ and } \norm{x-s} < \sqrt{n} \veps_i^{\frac{1}{3}}
    \right\}.
\end{equation}
We also define the neighborhoods of other stationary points with
greater
or equal energy function values,
\begin{equation}
    \calM_{s,\veps_i} = \left\{ x \middle| \norm{\grad{F(x)}} < \veps_i
    \text{ and } \norm{x-s^\prime} < \sqrt{n} \veps_i^{\frac{1}{3}}
    \text{ for } s^\prime \in \calS, s^\prime \neq s, F(s^\prime) \geq F(s)
    \right\}.
\end{equation}
Now we discuss the case when $x_i^{(t)}$ leaves the neighborhood.

\begin{lemma} \label{lem:op-saddle}
    Assume $\veps_i < \left(\frac{\sqrt{-\lambda_j}}{9\sqrt{n}}\right)^3,
    \veps_{i-1} < \frac{\veps_i}{8\sqrt{3}R_i^2}$. If
    $x_i^{(t)} \in \calN_{s, \veps_i}$ for $s \in \calS$, then for any
    $t^\prime>t$, $x_i^{(t^\prime)} \not\in \calM_{s, \veps_i}$.
\end{lemma}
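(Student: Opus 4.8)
The plan is to show that, once $x_i^{(t)}\in\calN_{s,\veps_i}$, the iterate is confined for all later times to a region disjoint from $\calM_{s,\veps_i}$: an energy barrier will exclude the neighborhood of every stationary point with energy strictly above $F(s)$, and a sign barrier will exclude the neighborhood of $-s$, the one point of $\calM_{s,\veps_i}$ that energy monotonicity cannot detect. As in the proof of Lemma~\ref{lem:saddle_neighbor}, assume without loss of generality that $A$ is diagonal, so $\tA=\mathrm{diag}(0,\dots,0,\lambda_i,\dots,\lambda_n)$ and $\calS=\{0\}\cup\{\pm\sqrt{-\lambda_j}\,e_j:i\le j\le q\}$. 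Expanding gives $F(x)=\fnorm{\tA}^2+2\sum_{k\ge i}\lambda_k x_k^2+\norm{x}^4$, hence $F(0)=\fnorm{\tA}^2$ and $F(\pm\sqrt{-\lambda_j}\,e_j)=\fnorm{\tA}^2-\lambda_j^2$; since the negative eigenvalues are distinct, the values of $F$ on $\calS$ are separated by a fixed gap $\gamma=\min\{\lambda_q^2,\ \min_{i\le k<q}(\lambda_k^2-\lambda_{k+1}^2)\}>0$. If $s=0$ then no other stationary point satisfies $F(s')\ge F(0)$, so $\calM_{s,\veps_i}=\emptyset$ and there is nothing to prove; hence assume $s=\sigma\sqrt{-\lambda_j}\,e_j$, and by the symmetry $x\mapsto-x$ take $\sigma=+1$. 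Throughout I may further shrink $\veps_i$ so that $\delta_0:=6nR_i^2\veps_i^{2/3}<\gamma$, a smallness condition of the same type as the hypothesis.

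\textbf{Step 1 (energy barrier).} I would prove by induction on $\tau$ that $F(x_i^{(\tau)})<F(s)+\delta_0$ for all $\tau\ge t$. The base case uses $\norm{x_i^{(t)}-s}<\sqrt n\veps_i^{1/3}$, $\grad{F}(s)=0$, and $\norm{H}\le4R_i^2$, giving $F(x_i^{(t)})\le F(s)+2nR_i^2\veps_i^{2/3}$. For the inductive step: if $\norm{\grad{F}(x_i^{(\tau)})}>\veps_i$, Lemma~\ref{lem:en_decay} yields $F(x_i^{(\tau+1)})<F(x_i^{(\tau)})$; if $\norm{\grad{F}(x_i^{(\tau)})}\le\veps_i$, Lemma~\ref{lem:saddle_neighbor} puts $x_i^{(\tau)}$ within $\sqrt n\veps_i^{1/3}$ of some $s''\in\calS$, so $F(s'')\le F(x_i^{(\tau)})+2nR_i^2\veps_i^{2/3}<F(s)+\gamma$, which forces $F(s'')\le F(s)$ by the gap, while the step is tiny ($\norm{x_i^{(\tau+1)}-x_i^{(\tau)}}=\alpha\norm{g_i}\le\frac98\alpha\veps_i$ since $\norm{g_i}\le\norm{\grad{F}(x_i^{(\tau)})}+\norm{E^{(\tau)}x_i^{(\tau)}}<\frac98\veps_i$), so $x_i^{(\tau+1)}$ remains near $s''$ and $F(x_i^{(\tau+1)})\le F(s'')+\delta_0\le F(s)+\delta_0$. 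The consequence is that $x_i^{(\tau)}$ is never within $\sqrt n\veps_i^{1/3}$ of a stationary point of energy exceeding $F(s)$ --- in particular never near $0$, nor near $\pm\sqrt{-\lambda_k}\,e_k$ with $k>j$ --- so if ever $x_i^{(t')}\in\calM_{s,\veps_i}$ it must be that $x_i^{(t')}\in\calN_{-s,\veps_i}$, hence $x_{ij}^{(t')}<-\sqrt{-\lambda_j}+\sqrt n\veps_i^{1/3}<0$.

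\textbf{Step 2 (sign barrier) and the main obstacle.} The $j$-th coordinate of \eqref{eq:it-for-xi} reads $x_{ij}^{(\tau+1)}=\bigl(1-\alpha\lambda_j-\alpha\norm{x_i^{(\tau)}}^2\bigr)x_{ij}^{(\tau)}-\alpha e_j^\top E^{(\tau)}x_i^{(\tau)}$, where Lemma~\ref{lem:bounded-domain} and Assumption~\ref{assump:init} give a multiplier $\ge1-\alpha R_i^2\ge\frac9{10}>0$ and the perturbation obeys $\bigl|\alpha e_j^\top E^{(\tau)}x_i^{(\tau)}\bigr|\le3\alpha\sqrt\rho\,\veps_{i-1}R_i<\frac18\alpha\veps_i$. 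Since $x_{ij}^{(t)}>\frac89\sqrt{-\lambda_j}$ (as $\sqrt n\veps_i^{1/3}<\frac19\sqrt{-\lambda_j}$ by hypothesis), the coordinate $x_{ij}^{(\tau)}$ stays positive as long as it exceeds the tiny threshold $\frac5{36}\alpha\veps_i$, so it suffices to show it never drops below that threshold. Step 1 gives $F(x_i^{(\tau)})<F(0)$, whence $\sum_{i\le k\le q}(-\lambda_k)(x_{ik}^{(\tau)})^2>\frac12(\lambda_j^2-\delta_0)>0$: the iterate permanently carries a fixed amount of negative-eigenvalue mass, which Step 1 keeps off every coordinate $k>j$ (else the iterate would eventually enter the strictly worse neighborhood $\calN_{\pm\sqrt{-\lambda_k}e_k,\veps_i}$). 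The intended conclusion is that whenever $x_{ij}$ becomes small the surviving mass forces $\norm{x_i^{(\tau)}}^2>-\lambda_j$, making the multiplier $<1$ so that $|x_{ij}|$ cannot regrow across $0$; and whenever the iterate revisits a small-gradient region, Lemma~\ref{lem:saddle_neighbor} and Step 1 place it near $\pm\sqrt{-\lambda_k}\,e_k$ with $k\le j$, so either $k<j$, in which case re-running Step 1 with this strictly better point as the new reference excludes $\calN_{-s,\veps_i}$ outright, or $k=j$, which returns us to $\calN_{s,\veps_i}$ itself. The delicate point, and the main obstacle of the lemma, is precisely this equal-energy alternative: since energy monotonicity cannot separate $s$ from $-s$, one must follow the escape direction out of the (possibly saddle) point $s$ and use the explicit quadratic form of $F$ with the coordinatewise recursion to rule out the negative-eigenvalue mass ever re-concentrating on coordinate $j$ with the opposite sign --- the finicky sub-case being a transient excursion where the mass momentarily rests on a coordinate $k>j$ without the iterate being $\sqrt n\veps_i^{1/3}$-close to $\pm\sqrt{-\lambda_k}\,e_k$. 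Everything else is routine Taylor-expansion and Cauchy--Schwarz bookkeeping.
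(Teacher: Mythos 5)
Your Step 1 (the energy barrier) is sound in spirit and does exclude the part of $\calM_{s,\veps_i}$ attached to stationary points with $F$ strictly above $F(s)$, but the lemma is not actually proved: the decisive case --- entering $\calN_{-s,\veps_i}$, together with the ``transient excursion'' in which the negative-eigenvalue mass rests on a coordinate $k\neq j$ while the iterate is not $\sqrt{n}\veps_i^{1/3}$-close to any stationary point --- is exactly what your Step 2 leaves as an ``intended conclusion,'' and you say so yourself. The inference ``Step~1 keeps the mass off every coordinate $k>j$, else the iterate would eventually enter a strictly worse neighborhood'' does not follow: carrying mass on coordinate $k$ is not the same as lying in $\calN_{\pm\sqrt{-\lambda_k}e_k,\veps_i}$, and the barrier $F<F(s)+\delta_0$ permits mixed configurations with large gradient, during which the sign of $x_{ij}$ is not controlled. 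Since $F(-s)=F(s)$, no amount of energy monotonicity alone finishes this, and the sign-barrier machinery is left incomplete. This is a genuine gap, not bookkeeping.

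The paper closes every case at once, with no sign or coordinate tracking, by using the part of Lemma~\ref{lem:en_decay} your argument never exploits: while $\norm{\grad{F}}>\veps_i$, each step decreases $F$ by at least $\frac12\veps_i\norm{x_i^{(t)}-x_i^{(t+1)}}$, i.e.\ the decrease is proportional to the distance traveled. Let $t_1$ be the first exit from $\calN_{s,\veps_i}$ after $t$ and $t_2$ the first subsequent entry into $\calM_{s,\veps_i}$, say near $s'$ with $F(s')\ge F(s)$. Because $x_i^{(t_1-1)}$ and $x_i^{(t_2)}$ lie within $\sqrt{n}\veps_i^{1/3}$ of $s$ and $s'$ respectively and $F(s')\ge F(s)$, the net drop satisfies $F\bigl(x_i^{(t_1)}\bigr)-F\bigl(x_i^{(t_2)}\bigr)\le 3\sqrt{n}\veps_i^{4/3}$; on the other hand, summing the per-step bound and using the triangle inequality gives a drop of at least $\frac12\veps_i\bigl(\norm{s-s'}-2\sqrt{n}\veps_i^{1/3}-\alpha(\veps_i+3\sqrt{\rho}R_i\veps_{i-1})\bigr)$, and every $s'\in\calS$ with $F(s')\ge F(s)$ --- including $s'=-s$ at distance $2\sqrt{-\lambda_j}$ and $s'=0$ at distance $\sqrt{-\lambda_j}$ --- satisfies $\norm{s-s'}>9\sqrt{n}\veps_i^{1/3}$ under the hypothesis on $\veps_i$, forcing a drop exceeding $\frac72\sqrt{n}\veps_i^{4/3}$. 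The contradiction handles the equal-energy point $-s$ exactly as easily as the higher-energy ones, which is the idea missing from your proposal; with it, your separate energy- and sign-barrier steps become unnecessary.
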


\begin{proof}
We prove this lemma by contradiction. Without loss of generality, we
assume $s = \sqrt{-\lambda_j}v_j$ for $j \geq i$. To simplify notations,
we drop the subscripts in $\calN_{s,\veps_i}$ and $\calM_{s,\veps_i}$.

Let $t_1 > t$ be the first time such that $x_i^{(t_1)} \not\in \calN$ and
$t_2 > t_1$ be the first time such that $x_i^{(t_2)} \in \calM$. More
precisely, we assume $x_i^{(t_2)}$ is in the neighborhood of $s^\prime$
for $F(s^\prime) \geq F(s)$ and the norm of the difference is lower
bounded as,
\begin{equation}
    \norm{s-s^\prime} \geq \min \left\{ \sqrt{-\lambda_j},
    \min_{j < k \leq q} \sqrt{-\lambda_j -\lambda_k} \right\}
    > 9 \sqrt{n} \veps_i^{\frac{1}{3}}.
\end{equation}
We also have,
\begin{align}
    \norm{\grad{F\left(x_i^{(t_1-1)}\right)}} < \veps_i, \quad
    \norm{x_i^{(t_1-1)} - s } < \sqrt{n} \veps_i^{\frac{1}{3}}, \quad
    \norm{\grad{F\left(x_i^{(t_1)}\right)}} \geq \veps_i, \notag \\
    \norm{\grad{F\left(x_i^{(t_2)}\right)}} < \veps_i, \quad
    \norm{x_i^{(t_2)} - s^\prime} < \sqrt{n}\veps_i^{\frac{1}{3}}.
\end{align}

In the following, we give two estimations on the energy difference
$F\left(x_i^{(t_1)}\right) - F\left(x_i^{(t_2)}\right)$ and derive the
contradiction. An upper bound on $F\left(x_i^{(t_1)}\right)$ admits,
\begin{align}\label{eq:est_leave}
    F\left(x_i^{(t_1)}\right) & = F\left(x_i^{(t_1-1)}\right)
    + \left(F\left(x_i^{(t_1)}\right)
    - F\left(x_i^{(t_1-1)}\right)\right)\notag \\
    & \leq F(s) + \sqrt{n}\veps_i^{\frac{4}{3}} +
    2\veps_i^2
    \leq F(s) + 2\sqrt{n}\veps_i^{\frac{4}{3}},
\end{align}
where the single step function difference can be upper bouneded in the
similar way as that in Lemma~\ref{lem:en_decay}. Combining
\eqref{eq:est_leave} with the lower bound of $F\left(x_i^{(t_2)}\right)$,
$F\left(x_i^{(t_2)}\right) \geq F\left(s^\prime\right) -
\sqrt{n}\veps_i^{\frac{4}{3}}$, we obtain an upper bound on the function
difference,
\begin{equation} \label{eq:paradox1}
    F\left(x_i^{(t_1)}\right) - F\left(x_i^{(t_2)}\right)
    \leq 3\sqrt{n}\veps_i^{\frac{4}{3}}.
\end{equation}

On the other hand, by Lemma~\ref{lem:en_decay}, the energy function keeps
decreasing for $x_i^{(t)}, t_1\leq t\leq t_2$.
\begin{align}
    F\left(x_i^{(t_1)}\right) - F\left(x_i^{(t_2)}\right)
    & = \sum_{t=t_1}^{t_2-1}
    \left(F\left(x_i^{(t)}\right) - F\left(x_i^{(t+1)}\right)\right) \notag\\
    & > \frac{1}{2} \veps_i \sum_{t=t_1}^{t_2-1}
    \left(\norm{x_i^{(t)}-x_i^{(t+1)}}\right)
    \geq \frac{1}{2}\veps_i\norm{x_i^{(t_1)}-x_i^{(t_2)}},
\end{align}
where the first inequality adopts Lemma~\ref{lem:en_decay}, and the
distance between $x_i^{(t_1)}$ and $x_i^{(t_2)}$ can be lower bounded as,
\begin{align}
    \norm{x_i^{(t_1)}-x_i^{(t_2)}} & \geq
    \norm{x_i^{(t_1-1)}-x_i^{(t_2)}}
    - \norm{x_i^{(t_1-1)}-x_i^{(t_1)}} \notag \\
    & \geq \norm{s-s^\prime} - 2 \sqrt{n} \veps_i^{\frac{1}{3}}
    - \alpha (\veps_i+3\sqrt{\rho}R_i \veps_{i-1})
    > 7 \sqrt{n} \veps_i^{\frac{1}{3}},
\end{align}
which contradict to \eqref{eq:paradox1}.
\end{proof}

\begin{lemma}\label{lem:same-saddle}
    Assume $\veps_i < \left(\frac{\sqrt{-\lambda_j}}{6\sqrt{n}}\right)^3,
    \veps_{i-1} < \frac{\veps_i}{8\sqrt{3}R_i^2}$. For any $t_1$, $t_2$
    with $t_1 < t_2$, and $s \in \calS$ such that $x_i^{(t_1)} \in
    \calN_{s,\veps_i}$, and $x_i^{(t_2)} \in \calN_{s, \veps_i}$, there is
    $\norm{x_i^{(t)} - s} < 6\sqrt{n} \veps_i^{\frac{1}{3}}$ for all $t
    \in [t_1, t_2]$.
\end{lemma}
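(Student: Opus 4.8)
The plan is to argue by contradiction: suppose there is some $t^* \in (t_1, t_2)$ with $\norm{x_i^{(t^*)} - s} \geq 6\sqrt{n}\veps_i^{1/3}$. The idea is to run the machinery of Lemma~\ref{lem:op-saddle} and Lemma~\ref{lem:en_decay} on the trajectory segment starting from $t_1$. Since $x_i^{(t_1)} \in \calN_{s,\veps_i}$, Lemma~\ref{lem:op-saddle} already tells us the trajectory can never enter $\calM_{s,\veps_i}$, i.e. it cannot converge into the neighborhood of any \emph{other} stationary point $s'$ with $F(s') \geq F(s)$. So the only way to leave a $6\sqrt{n}\veps_i^{1/3}$-ball around $s$ and come back to $\calN_{s,\veps_i} \subseteq \{x : \norm{x-s} < \sqrt{n}\veps_i^{1/3}\}$ at time $t_2$ is to wander through a region where the gradient is bounded below by $\veps_i$ (since the only point of $\calS$ within reach whose energy is $\geq F(s)$ is $s$ itself, and all stationary points with smaller energy are at distance $\geq \min\{\sqrt{-\lambda_j}, \min_{j<k\leq q}\sqrt{-\lambda_j-\lambda_k}\} > 6\sqrt{n}\veps_i^{1/3}$ from $s$ by the same computation as in Lemma~\ref{lem:op-saddle}).

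Concretely, let $t_3 \in (t_1, t_2)$ be the last time before $t_2$ at which $\norm{x_i^{(t_3)} - s} \geq 6\sqrt{n}\veps_i^{1/3}$; such a $t_3$ exists by the contradiction hypothesis. Then for all $t \in (t_3, t_2]$ we have $\norm{x_i^{(t)} - s} < 6\sqrt{n}\veps_i^{1/3}$, yet at $t_3$ itself the iterate is at distance $\geq 6\sqrt{n}\veps_i^{1/3}$. I would then give the two competing estimates on $F(x_i^{(t_3)}) - F(x_i^{(t_2)})$, exactly mirroring Lemma~\ref{lem:op-saddle}. For the upper bound: $x_i^{(t_2)} \in \calN_{s,\veps_i}$ gives $F(x_i^{(t_2)}) \geq F(s) - \sqrt{n}\veps_i^{4/3}$; and since between $t_1$ and $t_3$ the iterate is bounded away from all other stationary points (so either $\norm{\grad F} \geq \veps_i$ there, forcing energy decrease, or we are still near $s$), one shows $F(x_i^{(t_3)}) \leq F(s) + O(\sqrt{n}\veps_i^{4/3})$ — this needs the observation that $x_i^{(t_1)}$ starts with $F(x_i^{(t_1)}) \leq F(s) + \sqrt{n}\veps_i^{4/3}$ and energy only decreases up to single-step $O(\veps_i^2)$ corrections while passing through $\calN$. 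For the lower bound: Lemma~\ref{lem:en_decay} applied on $[t_3, t_2]$ gives $F(x_i^{(t_3)}) - F(x_i^{(t_2)}) > \tfrac12 \veps_i \norm{x_i^{(t_3)} - x_i^{(t_2)}}$, and $\norm{x_i^{(t_3)} - x_i^{(t_2)}} \geq \norm{x_i^{(t_3)}-s} - \norm{x_i^{(t_2)}-s} > 6\sqrt{n}\veps_i^{1/3} - \sqrt{n}\veps_i^{1/3} = 5\sqrt{n}\veps_i^{1/3}$, so the difference is $> \tfrac52 \sqrt{n}\veps_i^{4/3}$, which contradicts the upper bound once the constants are tracked (the threshold $\veps_i < (\sqrt{-\lambda_j}/(6\sqrt{n}))^3$ is precisely what keeps the $6\sqrt{n}\veps_i^{1/3}$-ball around $s$ disjoint from every other stationary point).

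The main obstacle I anticipate is getting the upper bound $F(x_i^{(t_3)}) \leq F(s) + O(\sqrt{n}\veps_i^{4/3})$ clean: one must be careful that the trajectory may leave and re-enter the immediate vicinity of $s$ several times between $t_1$ and $t_3$, so the naive ``energy only decreases'' argument has to be supplemented by the fact that whenever $\norm{\grad F(x_i^{(t)})} < \veps_i$ the point is within $\sqrt{n}\veps_i^{1/3}$ of some stationary point, and by Lemma~\ref{lem:op-saddle} that stationary point cannot be one with energy $\geq F(s)$ other than $s$ — but it could a priori be one with \emph{smaller} energy. Ruling that out requires noting that if the iterate ever got within $\sqrt{n}\veps_i^{1/3}$ of a lower stationary point $s''$, then by the distance bound $\norm{s - s''} > 6\sqrt{n}\veps_i^{1/3}$ it would have had to pass through the $6\sqrt{n}\veps_i^{1/3}$-sphere around $s$, and one re-runs the same two-sided energy estimate to get a contradiction there first; so effectively the whole segment $[t_1, t_3]$ stays near $s$ in the relevant sense, and the energy comparison closes. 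Tracking all the constants so that the final inequality $\tfrac52\sqrt{n}\veps_i^{4/3} > 3\sqrt{n}\veps_i^{4/3}$-type comparison genuinely fails (forcing the contradiction) is the bookkeeping-heavy part, but it is routine given Lemmas~\ref{lem:en_decay}--\ref{lem:op-saddle}.
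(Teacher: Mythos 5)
Your overall strategy (argue by contradiction, play a lower bound on the energy decrease from Lemma~\ref{lem:en_decay} against an upper bound on the available energy gap, with the threshold on $\veps_i$ keeping other stationary points out of the $6\sqrt{n}\veps_i^{1/3}$-ball) is the same as the paper's, but your particular segmentation of the trajectory leaves gaps that are not mere bookkeeping. First, your lower bound anchors the decrease sum at the far point $t_3$ and runs only forward to $t_2$: even granting Lemma~\ref{lem:en_decay} on all of $[t_3,t_2)$, the path length you extract is only $\norm{x_i^{(t_3)}-x_i^{(t_2)}}\geq 5\sqrt{n}\veps_i^{1/3}$, i.e.\ an energy drop $>\frac{5}{2}\sqrt{n}\veps_i^{4/3}$, which does \emph{not} contradict the upper bound of order $3\sqrt{n}\veps_i^{4/3}$; the margin is negative, so the argument as segmented cannot close. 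The paper instead takes $t_1^\prime$ to be the \emph{last exit} from $\calN_{s,\veps_i}$ before the far point $t^\prime$ and $t_2^\prime$ the \emph{first re-entry} after it, and lower-bounds the traversed path through $t^\prime$ by the triangle inequality, $\norm{x_i^{(t_1^\prime-1)}-x_i^{(t^\prime)}}+\norm{x_i^{(t^\prime)}-x_i^{(t_2^\prime)}}\geq 10\sqrt{n}\veps_i^{1/3}$ minus a single-step term, yielding a drop $>4\sqrt{n}\veps_i^{4/3}$ and hence the contradiction. Measuring the excursion both out \emph{and} back is essential to beat the constant.

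Second, your upper bound $F\left(x_i^{(t_3)}\right)\leq F(s)+O\left(\sqrt{n}\veps_i^{4/3}\right)$ is obtained by propagating the energy from $t_1$, and you yourself flag this as the main obstacle: between $t_1$ and $t_3$ the iterate may spend an unbounded number of steps inside $\calN_{s,\veps_i}$, where Lemma~\ref{lem:en_decay} gives no decrease and the perturbation $E^{(t)}x_i^{(t)}$ can raise $F$ by an uncontrolled cumulative amount, so the ``energy only decreases up to single-step corrections'' bookkeeping does not close. The paper sidesteps this entirely: because $x_i^{(t_1^\prime-1)}\in\calN_{s,\veps_i}$, the bound $F\left(x_i^{(t_1^\prime-1)}\right)\leq F(s)+\sqrt{n}\veps_i^{4/3}$ follows from the position and gradient bounds defining $\calN_{s,\veps_i}$ alone, as in \eqref{eq:est_leave}, with no reference to the history back to $t_1$, and one application of the single-step estimate then controls $F\left(x_i^{(t_1^\prime)}\right)$. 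Relatedly, with your choice of $t_3$ the iterate may re-enter $\calN_{s,\veps_i}$ strictly between $t_3$ and $t_2$, so Lemma~\ref{lem:en_decay} is not applicable at every step of $[t_3,t_2)$; the paper's choice of $t_1^\prime$ and $t_2^\prime$ guarantees the intermediate iterates lie outside $\calN_{s,\veps_i}$. If you re-anchor the decreasing segment at the last exit from $\calN_{s,\veps_i}$ before the far point and measure the out-and-back path, both gaps disappear and your argument becomes the paper's.
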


Lemma~\ref{lem:op-saddle} shows that starting from a neighborhood of a
stationary point, the iteration will not converge to the neighborhoods of
different stationary points with greater or equal function values.
Lemma~\ref{lem:same-saddle} shows that if the iteration returns to the
same stationary point $s \in \calS$, all middle iterations are within a
neighborhood of $s$. The proof of Lemma~\ref{lem:same-saddle} adopts a
similar idea as that of Lemma~\ref{lem:op-saddle}. Please refer to
Appendix~\ref{app:same-saddle-proof} for the detail.

\begin{proof}({\bf Proof of Lemma~\ref{lem:multi-global-conv})} First, according to Lemma~\ref{lem:en_decay} and
Lemma~\ref{lem:saddle_neighbor}, for any $\veps_i$, there exists a time
$t$ such that $\norm{\grad{F\left(x_i^{(t)}\right)}} < \veps_i$ and hence
$\norm{x_i^{(t)}-s} \leq \sqrt{n} \veps_i^{\frac{1}{3}}$ for some $s \in
\calS$. This means that $x_i^{(t)} \in \calN_{s,\veps_i}$.

We then have a few scenarios.
\begin{enumerate}[(1)]
    \item The iteration stays in $\calN_{s, \veps_i}$ forever.
    \item The iteration leaves $\calN_{s, \veps_i}$ and returns to
    $\calN_{s, \veps_i}$ again. By Lemma~\ref{lem:same-saddle}, the
    iteration stays within an $\veps_i$ dependent neighborhood of $s$.
    \item The iteration leaves $\calN_{s, \veps_i}$ and enters another
    $\calN_{s^\prime, \veps_i}$ for $s^\prime \in \calS$ and $s^\prime
    \neq s$. By Lemma~\ref{lem:op-saddle}, we have $F(s^\prime) < F(s)$.
\end{enumerate}
Since $F(x)$ only has a finite number of stationary points, the third
scenario happens a finite number of times. Hence there exists a $T$ such
that for all $t \geq T$, $x_i^{(t)}$ stays within an $\veps_i$
neighborhood of a stationary point. Since $\veps_i$ is a free parameter
and can go to zero, we proved the lemma.
\end{proof}

\begin{remark}

We notice that the proof given in Section~\ref{sec:proof2} could be
extended to \TOM{} applied to other functions. Lemma~\ref{lem:en_decay},
Lemma~\ref{lem:op-saddle}, and Lemma~\ref{lem:same-saddle} are very much
objective function independent. Hence the extensions are straightforward.
The only part which shall be treated carefully is
Lemma~\ref{lem:saddle_neighbor}. The idea, `if $\norm{\grad{F(x)}}$ is
small, then $x$ is close to a stationary point', seems to be natural but
requires some work. Overall, we believe that our proof of convergence is
transferable.

\end{remark}

\section{Numerical Results}
\label{sec:num_res}

In the previous sections, we analyzed the theoretical converging
behavior of \TOM{}. In this section, we will show a numerical
result to support our analysis. A gradient descent algorithm with a
fixed stepsize is adopted, which is in accordance with the analysis
above. Though requiring plenty of iterations to converge, the numerical
results here mainly serve to depict the steady convergence behavior
and validate our analysis result. Numerical acceleration techniques
are proposed and explored in our companion paper~\cite{Gao2020}.

We compute the low-lying eigenpairs of a two-dimensional Hubbard model
under the FCI framework, which is defined on a lattice of size $3\times 3$
with 8 electrons (4 spin-up and 4 spin-down). The matrix size is $n\approx
1.7\times 10^3$. We adopt two expressions to moniter the convergence: one
for the objective function value $e_{\text{obj}}=f(X^{(t)})-f(X^\star)$, and
another for the accuracy of eigenvectors,
\begin{equation} \label{eq:evec}
    e_{\text{vec}} = \min_{ X^* \in
    \calX^*} \frac{\fnorm{X - X^*}}{\fnorm{X^*}},
\end{equation}
where in both expressions $\calX^*$ denotes the set of all stable fixed
points of the \TOM{}. We stop the iteration with a tolerance being $tol =
10^{-6}$. Though this is not of high accuracy from numerical linear
algebra point of view. As we have tested and proved~\cite{Gao2020}, the
method would converges steadily to the global minima with a linear rate in
local neighborhoods of global minima.

As we have proved in Section~\ref{sec:globalconv}, \TOM{} avoids the
saddle points and converges to the global minima with probability one. In
order to demonstrate the ability to escape from saddle points, we
initialize the iteration near a saddle point. Each time, we first randomly
choose a saddle point and randomly perturb it to be the initial point,
\ie, $\norm{X^{(0)} - X_{\text{saddle}}} < 10^{-6}$.

In all more than 100 experiments we have tested, $X^{(t)}$ always escape
from saddle points and converge to $\calX^*$. Here we depict two typical
convergence behaviors in Figure~\ref{fig:conv_behave}.

\begin{figure}[htp]
    \includegraphics[width=0.48\textwidth]{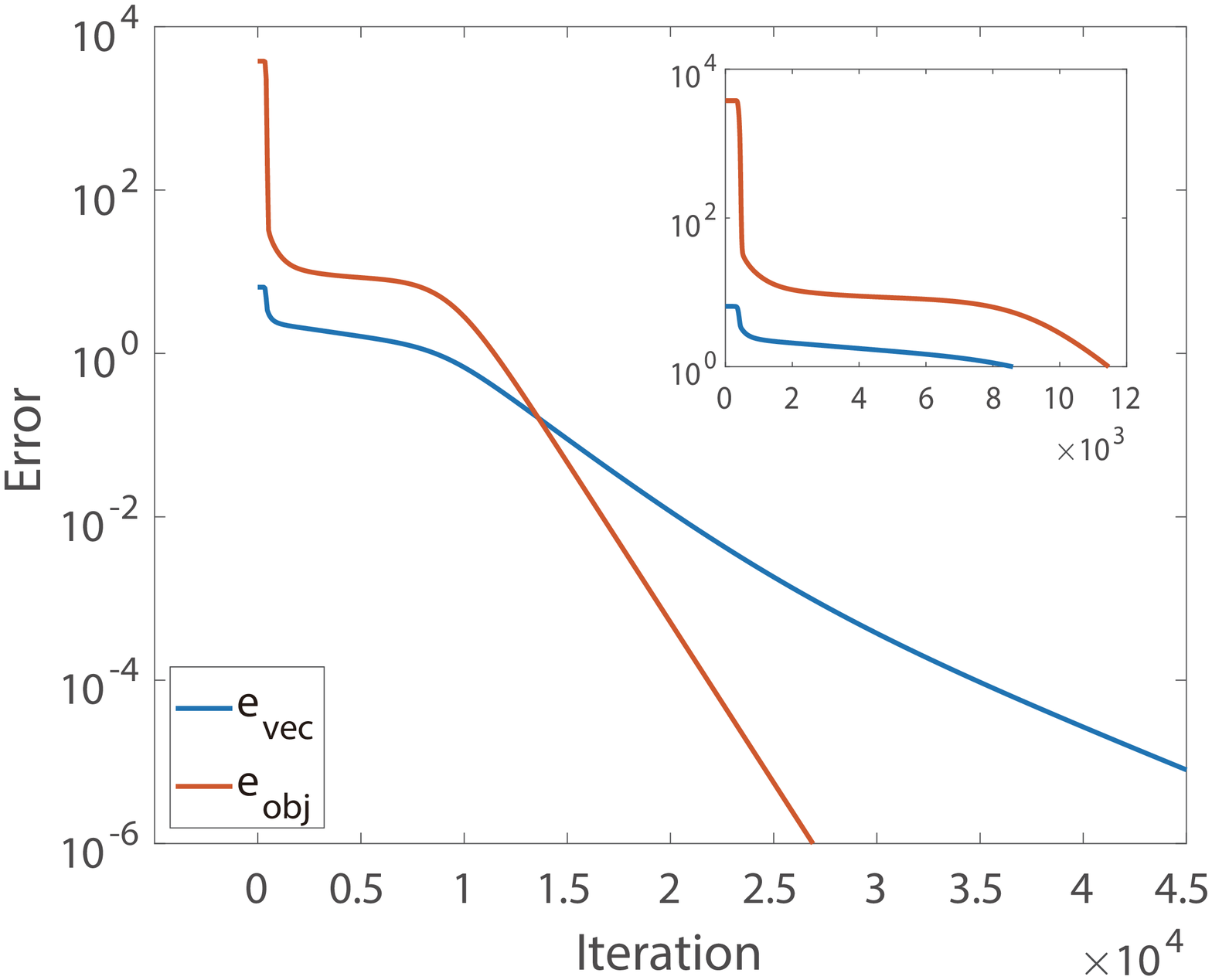}
    \includegraphics[width=0.48\textwidth]{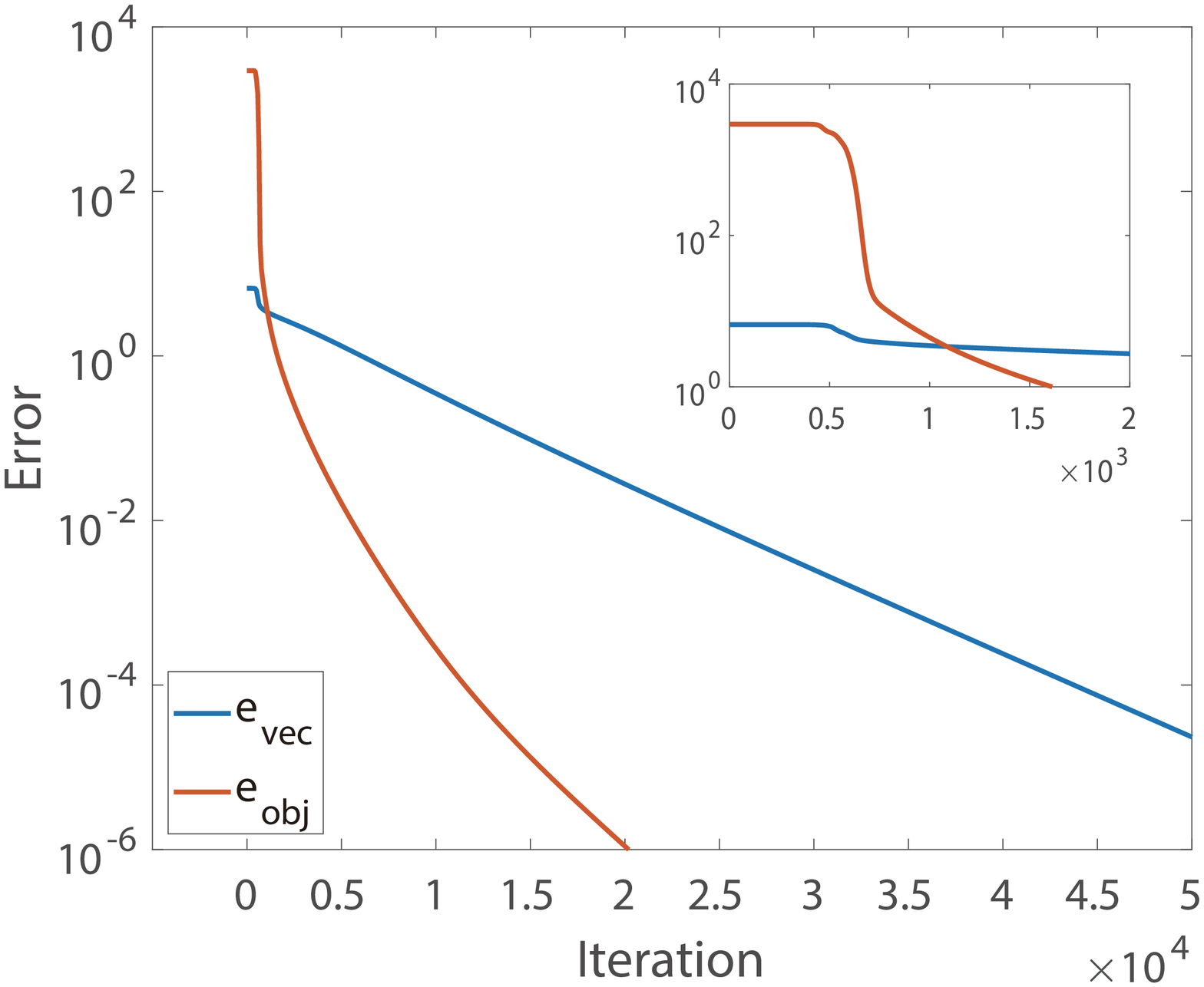}
    \caption{Convergence behavior of two tests, each are depicted with
    $e_{\text{vec}}$ and $e_{\text{obj}}$. The first few iterations
    are zoomed in on their top right corners.} \label{fig:conv_behave}
\end{figure}

Since the initial points are chosen near some saddle points, in the first
few iterations, the error decays slowly, and the linear convergence rate
is not guaranteed. After $X^{(t)}$ escapes from the saddle point near the
initial point, it may fall into the domain of local linear convergence
directly (Figure~\ref{fig:conv_behave} right) or slow down several times
before linear convergence (Figure~\ref{fig:conv_behave} left), which
depends on the choice of initial points. According to our analysis, the
slow-down happens a finite number of times, and the iteration will
converge monotonically to the global minima. All of our experiments,
including the two in Figure~\ref{fig:conv_behave} agree well with our
analysis.


\medskip
\noindent

{\bf Acknowledgments.} The authors thank Jianfeng Lu and Zhe Wang for
helpful discussions. YL is supported in part by the US Department of
Energy via grant de-sc0019449. WG is supported in part by National Science
Foundation of China under Grant No. 11690013, U1811461.

\bibliographystyle{apalike} \bibliography{library}

\appendix

\section{Proof of Lemma~\ref{lem:bounded-domain}}
\label{app:whole_bound}

\begin{proof}
It is sufficient to show that the condition holds for one iteration. In
order to simplify the notations, we denote $x_i^{(t)}$ and $x_i^{(t+1)}$
as $x_i$ and $\tx_i$ respectively. The norms of $x_i$ and $\tx_i$ are
denoted as $r_i$ and $\tr_i$ respectively.

The iteration in \eqref{eq:triofm} can be written as,
\begin{equation}
    \tx_i = x_i - \alpha A x_i - \alpha (\sum_{j=1}^{i}x_j
    x_j^\top) x_i
    = x_i - \alpha \tA x_i - \alpha x_i
    x_i^\top x_i.
\end{equation}
where $\tA = A + \sum_{j=1}^{i-1} x_j x_j^\top $. The norm square of
$\tx_i$ can be calculated as,
\begin{align} \label{eq:norm_sqr_tx}
    \tr_i^2 = \tx_i^\top \tx_i & = x_i^\top x_i - 2 \alpha
    \left(x_i^\top \tA x_i + (x_i^\top x_i)^2 \right) \notag\\
    & \quad +
    \alpha^2 \left( x_i^\top \tA^\top \tA x_i + (x_i^\top
    x_i)^3 + 2x_i^\top \tA x_i (x_i^\top x_i) \right) \notag\\
    & = r_i^2 - 2 \alpha
    \left(x_i^\top \tA x_i + r_i^4 \right) +
    \alpha^2 \left( x_i^\top \tA^\top \tA x_i + r_i^6 + 2
    x_i^\top \tA x_i r_i^2 \right).
\end{align}
Given that all $x_i$ satisfy the conditions $\norm{x_i}
\leq R_i$, we have inequality for any vector $x$ and power $k$,
\begin{equation} \label{eq:ineqA}
    -\left( \rho + \sum_{j=1}^{i-1}R_j^2 \right)^k \norm{x}^2
    \leq x^\top \tA^k x \leq \left( \rho + \sum_{j=1}^{i-1}R_j^2
    \right)^k \norm{x}^2,
\end{equation}
where we adopt the definition of eigenvalues for the first part in $\tA$
and Cauchy-Schwartz inequality for the second part in $\tA$. Due to the
assumption on $R_j$, we can bound the growing factor in \eqref{eq:ineqA}
as,
\begin{equation} \label{eq:ineqsum}
    \rho+\sum_{j=1}^{i-1}R_j^2
    \leq \rho \left( 1 + 3\sum_{j=1}^{i-1}4^{j-1} \right)
    \leq 4^{i-1}\rho = \frac{R_i^2}{3}.
\end{equation}

With these inequalities, the first order term of $\alpha$ in
\eqref{eq:norm_sqr_tx} can be bounded as,
\begin{equation}
    -2\alpha \left(x_i^\top \tA x_i + r_i^4 \right)
    \leq 2\alpha r_i^2
    \left( \frac{ R_i^2 }{3} - r_i^2 \right).
\end{equation}
And the second order term can be bounded as,
\begin{align} \label{eq:term2}
    \alpha^2 \left(x_i^\top \tA^2 x_i + r_i^6 +
    2x_i^\top \tA x_i r_i^2 \right) & \leq \alpha^2 \left(
    \left(\rho+\sum_{j=1}^{i-1}R_j^2 \right)^2 r_i^2 +
    r_i^6 + 2\left(\rho + \sum_{j=1}^{i-1} R_j^2\right)
    r_i^4 \right) \notag\\
    & \leq \alpha^2 r_i^2 \left(
    \frac{R_i^4}{9} +
    r_i^4 +  \frac{2 R_i^2}{3} r_i^2 \right).
\end{align}
The rest of the proof is divided into two scenarios, \ie,
$r_i \in [\frac{\sqrt{2}R_i}{\sqrt{3}}, R_i]$ and $r_i \in [0,
\frac{\sqrt{2}R_i}{\sqrt{3}})$.

In the first scenario, $r_i \in [\frac{\sqrt{2}R_i}{\sqrt{3}},
R_i]$, we have $ -2\alpha \left( x_i^\top \tA x_i + r_i^4
\right)\leq -\alpha r_i^2 \frac{2R_i^2}{3}$ for the first order
term.  Applying $\alpha \leq \frac{1}{5R_p^2}$, we have,
\begin{equation}
    \tr_i^2 \leq r_i^2 + \alpha r_i^2
    \left( -\frac{2R_i^2}{3} + \frac{R_i^4}{45 R_p^2} +
    \frac{R_i^4}{5 R_p^2} + \frac{2R_i^4}{15 R_p^2} \right)
    \leq r_i^2 - \alpha r_i^2
    \frac{14R_i^2}{45} < r_i^2 \leq R_i^2.
\end{equation}

In the second scenario, $r_i \leq \sqrt{\frac{2}{3}}R_i $, we have
\begin{align}
    \tr_i^2 & \leq r_i^2 + 2\alpha r_i^2 \left( \frac{ R_i^2 }{3}
    - r_i^2 \right) + \alpha^2 r_i^2 \left( \frac{R_i^4}{9}
    + r_i^4 +  \frac{2R_i^2}{3} r_i^2 \right) \notag\\
    & \leq \frac{2R_i^2}{3} + \frac{ 4 R_i^4 }{45 R_p^2}
    + \frac{2 R_i^6}{675 R_p^4}
    + \frac{ 8 R_i^6}{675 R_p^4} + \frac{8R_i^6}{675 R_p^4} <
    R_i^2. 
\end{align}

This proves the lemma.
\end{proof}

\section{Proof of Lemma~\ref{app:lemma8}}
\label{app:lemma8-proof}

\begin{proof}
Without loss of generality, we assume $A$ is diagonal. Notations remain
the same as that in the proof of Lemma~\ref{app:lemma5} if not redefined.
Based on the assumptions, we denote $k \in (i, q]$ as the smallest integer
such that $e_k^\top x_i^{(t)} = x_{ik}^{(t)}$ does not converge to zero.
Hence, there exists a positive number $\delta > 0$ such that for any $N$,
there exists a $t>N$ and $\abs{x_{ik}^{(t)}} > \delta$.  We also know that
Lemma~\ref{app:lemma6} holds, $\norm{E^{(t)}}$ converges to zero, and
$e_j^\top x_i^{(t)} = x_{ij}^{(t)}$ converges to zero for all $j \in [1,i)
\cup (p,n]$. Hence, for any $\veps$ sufficiently small, there exists an
integer $N_1$ such that $\norm{E^{(t)}} \leq \veps^2$, $\norm{x_i^{(t)}}
\geq \frac{\sqrt{-2\lambda_q}}{4}$, and $\abs{x_{ij}^{(t)}} \leq \veps $
hold for all $t > N_1$ and $j \in [1, i) \cup (p,n]$. Since $e_j^\top
x_i^{(t)} = x_{ij}^{(t)}$ does not converge to zero, there exists an
integer $N_2 > N_1$, such that,
\begin{equation} \label{eq:ratio-cond-j}
    \frac{\abs{x_{ij}^{(N_2)}} }{\norm{x_{i}^{(N_2)} }}
    \geq \frac{\delta}{R_i} \geq
    \frac{ 2 \veps^2}{\lambda_{j+1}-\lambda_j}.
\end{equation}

Using the iterative relationship \eqref{eq:it-for-xi}, we have the
lower bound on $\abs{x_{ij}^{(N_2+1)}}$,
\begin{equation} \label{eq:denominator-lower-bound-j}
    \abs{x_{ij}^{(N_2+1)}} \geq \left(1 - \alpha \frac{\lambda_j
    + \lambda_{j+1}}{2} - \alpha \norm{x_i^{(N_2)}}^2 \right)
    \abs{x_{ij}^{(N_2)}}.
\end{equation}

Again using the iterative relationship \eqref{eq:it-for-xi},
we provide the upper bound for
\begin{align} \label{eq:numerator-upper-bound-j}
        \sqrt{ \sum_{j \neq k} \left( x_{ij}^{(N_2+1)}
        \right)^2 } & \leq \Bigg\{ \sum_{j < i} \left(1 - \alpha
        \norm{x_i^{(N_2)}}^2\right)^2 \left(x_{ij}^{(N_2)}\right)^2
        \notag\\
        & \quad + \sum_{i \leq j < k} \left(1 - \alpha
        \lambda_j - \alpha \norm{x_i^{(N_2)}}^2\right)^2
        \veps^2 \notag\\
        & \quad + \sum_{j > k} \left(1 - \alpha \lambda_{k+1} - \alpha
        \norm{x_i^{(N_2)}}^2\right)^2 \left(x_{ij}^{(N_2)}\right)^2
        \Bigg\}^{\frac{1}{2}} + 3 \sqrt{\alpha n}R_i \veps \notag\\
        & \leq \left(1 - \alpha \lambda_{k+1} - \alpha
        \norm{x_i^{(N_2)}}^2\right) \sqrt{ \sum_{j \neq k}
        \left(x_{ij}^{(N_2)}\right)^2 } + \sqrt{n}\veps + 3
        \sqrt{\alpha n}R_i \veps,
\end{align}
where the derivation is slightly
different from that in \eqref{eq:numerator-upper-bound},
The first inequality adopts similar derivation in
\eqref{eq:numerator-upper-bound} while keeps the first term
unchanged; and the second inequality mainly uses the inequality
of square-root function.

Substituting \eqref{eq:denominator-lower-bound-j} and
\eqref{eq:numerator-upper-bound-j} into the expression of $\tan
\theta_k^{(N_2+1)}$, we obtain,
\begin{equation} \label{eq:tan-ineq-j}
    \tan \theta_k^{(N_2+1)} \leq (1 - \beta) \tan \theta_k^{(N_2)}
    - \beta \tan \theta_k^{(N_2)} + C \veps,
\end{equation}
where $\beta = \frac{\alpha (\lambda_{i+1} - \lambda_i)}{4
- 2\alpha (\lambda_i + \lambda_{i+1})} \in (0,1)$ and $C =
\frac{8\sqrt{n}R_i + 24\sqrt{\alpha n}R_i^2}{\sqrt{-2\lambda_q}
\delta}$.

Based on the last inequality in \eqref{eq:tan-ineq-j},
if $\tan \theta_k^{(N_2)} > \frac{C\veps}{\beta}$,
than we have $\tan \theta_k^{(N_2+1)} < (1 - \beta) \tan
\theta_k^{(N_2)}$, which implies $\cos \theta_k^{(N_2+1)} >
\cos \theta_k^{(N_2)}$ due to the fact that all angles acute
angle. Therefore, \eqref{eq:ratio-cond-j} holds for $t =
N_2+1$ and $\tan \theta_k^{(t)}$ decay monotonically until
$\tan \theta_k^{(t)} \leq \frac{C \veps}{\beta}$. When $\tan
\theta_k^{(t)} \leq \frac{C \veps}{\beta}$, we obviously have
$\tan \theta_k^{(t+1)} \leq \frac{C\veps}{\beta}$. The inequality
condition \eqref{eq:tan-ineq} still holds as long as $\veps$ is
sufficiently small.  Hence there exists a $N$ such that for all $t
> N$, we have $\tan \theta_k^{(t+1)} \leq \frac{C\veps}{\beta}$,
which can be arbitrarily small.
\end{proof}

\section{Proof of Lemma~\ref{lem:same-saddle}}
\label{app:same-saddle-proof}

\begin{proof}
This lemma is proved in a similar way as that of
Lemma~\ref{lem:op-saddle}. If the lemma does not hold, there is some
$t^\prime \in (t_1, t_2)$ such that $\norm{x_i^{\left(t^\prime\right)}
-s}\geq 6\sqrt{n} \veps_i^{\frac{1}{3}}$. Let $t_1^\prime \in (t_1,
t^\prime]$ be the latest time such that $x_i^{\left(t_1^\prime-1\right)}
\in \calN$ and $x_i^{\left(t_1^\prime\right)} \not \in \calN$ for $\calN =
\calN_{s,\veps_i}$. Also let $t_2^\prime \in  (t^\prime, t_2]$ be the
first time such that $x_i^{\left(t_2^\prime \right)} \in \calN$. Then we
have,
\begin{align} \label{eq:same-saddle-pts}
    \norm{\grad{F\left(x_i^{\left(t_1^\prime-1\right)}\right)}} < \veps_i, \quad
    \norm{x_i^{\left(t_1^\prime-1\right)} - s } < \sqrt{n} \veps_i^{\frac{1}{3}}, \quad
    \norm{\grad{F\left(x_i^{\left(t_1^\prime\right)}\right)}} \geq \veps_i,\notag\\
    \norm{\grad{F\left(x_i^{\left(t_2^\prime\right)}\right)}} < \veps_i, \quad
    \norm{x_i^{\left(t_2^\prime\right)} - s} < \sqrt{n}\veps_i^{\frac{1}{3}}.
\end{align}

By the construction of $t^\prime_1$ and $t_2^\prime$, we know that for all
$t \in [t^\prime_1, t_2^\prime)$, $x_i^{(t)} \not\in \calN$. Adopting the
same derivation as in \eqref{eq:paradox1}, we obtain a lower bound on the
function difference,
\begin{equation}
    F\left(x_i^{\left(t_1^\prime\right)}\right)
    - F\left(x_i^{\left(t_2^\prime\right)}\right)
    \leq 3\sqrt{n}\veps_i^{\frac{4}{3}}.
\end{equation}

On the other hand, lower bound of the energy function decrement for
$x_i^{\left(t_1^\prime\right)}, \dots, x_i^{\left(t_2^\prime\right)}$ can
be estimated as, according to Lemma~\ref{lem:en_decay},
\begin{align}
        F\bigg(x_i^{\left(t_1^\prime\right)}\bigg)
        - F\bigg(x_i^{\left(t_2^\prime\right)}\bigg)
        & > \frac{1}{2} \veps_i \left( \sum_{t=t_1^\prime}^{t^\prime-1}
        \left(\norm{x_i^{(t)}-x_i^{(t+1)}}\right)+\sum_{t=t^\prime}^{t_2^\prime}
        \left(\norm{x_i^{(t)}-x_i^{(t+1)}}\right)\right) \notag\\
        & \geq \frac{1}{2}\veps_i\Bigg(\left\|x_i^{\left(t_1^\prime-1\right)}
        -x_i^{\left(t^\prime\right)}\right\|+\left\|x_i^{\left(t_2^\prime\right)}
        -x_i^{\left(t^\prime\right)}\right\|-\norm{x_i^{\left(t_1^\prime-1\right)}
        -x_i^{\left(t_1^\prime\right)}}\Bigg)\notag\\
        & > 4\sqrt{n} \veps_i^{\frac{4}{3}},
\end{align}
where the last inequality is based on \eqref{eq:same-saddle-pts} and
$\norm{x_i^{\left(t^\prime\right)}-s}\geq 6\sqrt{n}
\veps_i^{\frac{1}{3}}$. Thus the contradiction is derived.
\end{proof}

\end{document}